\newtheorem{defn}{Definition}
\newtheorem{thm}{Theorem}
\newtheorem{cor}{Corollary}
\newtheorem{lemm}{Lemma}
\newtheorem{prop}{Proposition}
\newtheorem{rem}{Note}[section]
\def\ra{\mathbf{\Lambda}}
\def\hn{\mathrm{HN}}
\def\RH{\mathrm{RH}}
\def\ab{\mathrm{ab}}
\renewcommand{\hsg}{\ti{\sg}}
\author{Eduardo Corel\footnote{E-mail : ecorel@gwdg.de} and Elie
Compoint\footnote{E-mail: compoint@math.univ-lille1.fr}}
\title{Stable Flags and the Riemann-Hilbert Problem}
\date{\today}
\begin{document}
\maketitle

\begin{abstract}
We tackle the Riemann-Hilbert problem on the Riemann sphere
as stalk-wise logarithmic modifications of the classical R\"ohrl-Deligne vector bundle.
We show that the solutions of the Riemann-Hilbert problem are in
bijection with some families of local filtrations which are stable
under the prescribed monodromy maps. 
We introduce the notion of Birkhoff-Grothendieck trivialisation, and
show that its computation corresponds to geodesic paths in some local 
affine Bruhat-Tits building. We use this to compute how the type of a
bundle changes under stalk modifications, and give several
corresponding algorithmic procedures.
\end{abstract}

%\subjclass{32S40, 34M35, 34A40, 34A30, 14H60}

%\keywords{Meromorphic connection, vector bundle, lattice,
%exponents, Levelt lattice, Fuchs' relation}

%\tableofcontents

%%%%%%%%%%%%%%%%%%%%
\section*{Introduction}
\label{in}
%%%%%%%%%%%%%%%%%%%

The Riemann-Hilbert problem (RHP) has a long and distinguished history, not
even devoid of suspense, for it has been solved several times, using
different tools, in a seemingly complete and positive way. It is
finally A. A. Bolibrukh, in a celebrated series of papers at the
beginning of the 1990's who clarified the situation, by rigorously
defining (and exhibiting a counter-example) to the strongest version
of the RHP, thereby showing that people before him had either
committed a mistake, or solved in reality a weaker problem.

The modern approach to the RHP was initiated by H. R\"ohrl in the
1950's who used
the theory of vector bundles in a way that has been conserved
since. First, one constructs a vector bundle $\E$ outside the singular
points, whose cocycle mimicks the monodromy. We call this the {\em
topological} RH problem, since the monodromy is so much encoded in the
topology of the constructed bundle, that construction of the required
connection becomes essentially trivial.
The second step consists in extending the bundle (and the connection)
to the singular points by means of a {\em local solution} to the
inverse monodromy problem. It has been exposed in great generality in
P. Deligne's work (\cite{De}) how to extend a holomorphic vector
bundle $\E$, defined over the complement of a divisor $D$ and
endowed with a holomorphic connection $\na$ having a prescribed
monodromy about $D$, into a logarithmic connection
$(\ov{\E},\ov{\na})$ with singularities on the divisor, uniquely
determined by  a section of the natural projection $\app{\C}{\C/\Z}$.
In this way, we get all logarithmic extensions of $\E$ with {\em
non-resonant}
residue (the {\em Deligne lattices}).  These two steps are sufficient
to solve positively the weak Riemann-Hilbert problem ({\it i.~e.} with
regular singularities). Note, however, that in this second level, two
different types of problems have been mixed. The
connection constructed is essentially unique up to {\em meromorphic}
equivalence (to be rigorously defined later) whereas the holomorphic
vector bundle setting already introduces much finer {\em holomorphic}
equivalence problems. This fact can contribute to explain some of the
confusions that have surrounded the precise formulation of the RHP.

The {\em strong Riemann-Hilbert problem} asks however for a
logarithmic bundle (with the prescribed monodromy) which is moreover
{\em trivial}.

So, to solve the Riemann-Hilbert problem in this way, one must modify the
constructed Deligne bundle, over the support of the singular divisor
exclusively (to keep the singular set invariant),
while conserving its logarithmic character, until a trivial bundle
is eventually found. Until A. Bolibrukh's celebrated counter-example
(\cite{Bo1}), it was widely acknowledged that this was possible, and
it is indeed so in
several ``generic'' instances, although some mistakes in the seemingly
general solution by Plemelj had already been pointed out.

The counter-example found by A. A. Bolibrukh to the strong
Riemann-Hilbert problem requires the knowledge of {\em all} the
logarithmic extensions of a regular connection, in order to prove that
none is trivial. Despite the production of both counter-examples and
sufficient
conditions for a positive answer, no general necessary and sufficient
conditions for
the solubility of the strong Riemann-Hilbert problem have been given
in terms of the
monodromy representation only.

As already stated, the strong Riemann-Hilbert problem admits a solution
if and only if the stalks of the Deligne bundle over the singular set
can be replaced by logarithmic lattices in such a way that the resulting
bundle is trivial. To tackle this problem, it is logical to
\begin{enumerate}[a)]
\item determine the set of all logarithmic lattices above a given point,\label{c1}
\item  get a criterion for the triviality of the modified bundle.\label{c2}
\end{enumerate}

In this paper, we solve problem~\ref{c1} by giving a complete description
of the logarithmic lattices in terms of flags stabilised under the action of the 
residue of the connection. After finding such a characterisation, we became aware
that such a description appears in~\cite{Sa} p., who attributes this result
to Deligne-Malgrange. However, we preferred to state the whole result in the 
geometric terms of our paper. We also give a partial answer to problem~\ref{c2}.
In the case of $\P$, the type of a vector bundle gives such a triviality criterion.
In our selected approach, starting with the Deligne bundle $D$, we perform a 
modification of a finite number of stalks, resulting in the bundle $D^{\mathrm{mod}}$.
The question is then to compute the type of the modified bundle $D^{\mathrm{mod}}$.
Generalising a result by Gabber and Sabbah (proposition~\ref{GS}), we 
show how to determine the type of $D^{\mathrm{mod}}$ from the type of $D$.
Thus, problem~\ref{c2} is reduced to computing the type of the Deligne bundle.
In a second step, we show that this problem in turn is reduced to the 
well-known problem of {\em connection matrices}.

The paper is organised as follows.

In a first section, we define the category in which we will work, and what we precisely
mean by ``modifying a bundle in one or several points''. In a second part,
we describe the geometry on the local lattices involved. We describe this 
geometry in terms of the affine Bruhat-Tits building of $\SL_n$. This choice
is justified by the fact that, more than the local lattices themselves, our
description relies on the homothety class of such lattices, which are
precisely the vertices of the considered Bruhat-Tits building. Several
invariants attached to the underlying 1-skeleton, particularly the 
natural graph-theoretical distance, will play an important paper.

In a third part, we use this setting to give an effective method to compute 
how the type of an arbitrary bundle $E$ is modified under a modification 
$E^{\mathrm{mod}}$ of $E$. This algorithm can also be applied to
compute the type of the bundle $E$. This third section concludes
with a generalisation of an essential result due to Bolibrukh, the
permutation lemma, for which we provide an interesting geometric 
interpretation.

The fourth section gives the complete description of the set of
logarithmic lattices in terms of flags which are stable under the
action of the residue of the connection on the Deligne bundle.

In the last part, after recalling the construction of the 
classical R\"ohrl-Deligne bundle, we give a very concise proof
of Plemelj's theorem on the Riemann-Hilbert solubility. This
well-known result becomes an immediate consequence of the 
geometrical interpretations of the permutation lemma and the
set of logarithmic lattices. We describe all trivialisations
of the Deligne bundle over an arbitrary point, and we establish a
stronger inequality on the type of the weak solutions to Riemann-Hilbert 
in the irreducible case. Finally, we give algorithmically effective
procedures that allow to search the space of weak solutions. 

%%%%%%%%%%%%%%%%%%%%%
\section{Holomorphic Vector Bundles}
\label{HVB}
%%%%%%%%%%%%%%%%%%%%

Let $X$ be a compact Riemann surface and let $\appto{\pi}{E}{X}$ be
a rank $n$ holomorphic vector bundle. The sheaf $\E$
of holomorphic sections of $E$ is a locally free sheaf of
$\ox$-modules of the same rank $n$, where $\ox$ denotes as usual 
the sheaf of holomorphic functions on $X$. There is a well-known
equivalence between these two
categories. However, it is also well known that this equivalence
fails for sub-objects of the same rank. Any locally free subsheaf
$\F\subset\E$ of $\ox$-modules of rank $n$
on $X$ can be seen as a sheaf locally generated over $\ox$ by {\em
holomorphic} sections of $E$,
while the equivalence of categories allows us to call $\F$ a
holomorphic {\em vector bundle}. However, it
is not possible to find an equivalent to $\F$ as a sub-bundle of $E$
since both have the same rank.

\paragraph{Meromorphic Connections.}
Let $\div=\sum_{i=1}^p m_i x_i$
be a positive divisor on $X$. Let $\h_{\div}$ be the sheaf of
meromorphic functions on $X$ having pole orders bounded by $D$
({\em i.e.} less than $m_i$ at $x_i$). Let
$\dd_{\div}=\{x_1,\ldots,x_p\}$ be  its {\em support}.  For any finite
set $S=\{y_1,\ldots,y_t\}$, let $[S]=y_1+\cdots+y_t$.

Let $\na:\app{\E}{\tens{\E}{\ox}{\omdiv{\div}}}$ be a meromorphic
connection with singular divisor $\div$ on a vector bundle $\E$ of
rank $n$. In the sequel, we will always assume that $\div$ is the
smallest possible. Sometimes for simplicity we'll just say
``connection'' for the pair $(\E,\na)$. The {\em Poincar\'e rank} of
$\na$ at $x\in X$  is the integer $\p_{x}(\na)=\max(0,m_x-1)$. If
$\p_{x}(\na)=0$, the sheaf $\E$ is said to be {\em logarithmic with
  respect to $\na$ at $x$}. Let $\dd=\abs{\div}$ be the singular, and
$\dd_{\log}=\{x\in \dd \tq \p_x(\na)=0\}$ the logarithmic singular
sets of $\na$. If $\dd_{\log}\neq \emptyset$, then one can define the
residue map $\r \na\in \End(\E/\E_{-[\dd_{\log}]})$, where
$[\{x_1,\ldots,x_m\}]=x_1+\cdots+x_m$. If $x\in
\dd\backslash\dd_{\log}$ is not logarithmic, the residue of
$z^{\p_x(\na)}\na$ for any local coordinate $z$ induces the
well-defined {\em polar map} $PM\na \in \mathbb{P}
\End(\E/\E_{-[\dd]})$ of $\na$ over $\E$. We will specify in parentheses
the bundle if necessary.

\paragraph{The Meromorphic Bundle.}
Let $\V=\tens{\E}{\ox}{\mx}$ be the sheaf of meromorphic sections of
$E$. A meromorphic connection $\na$ on $\E$ induces a canonical extension to $\V$.
Since the sheaf $\E$ can be embedded
into $\V$, we consider henceforth only the set $$H=\{\F\subset \V \tq
\F\stackrel{{\mathrm{loc.}}}{\simeq}\ox^n\}$$
of {\em holomorphic vector bundles} of $\V$. Each such bundle $\F$ is 
automatically endowed with a meromorphic connection induced by $\na$. By simplicity,
we won't make any notational difference between all these connections. 

We say that $\F\in H$ is {\em trivial} if $\F\simeq \ox^n$, or, equivalently, if $\F$ is generated by
its global sections. In this case, the set $\Gamma(X,\F)$ of global
sections is a $\C$-vector space
of dimension $n$, admitting as basis vectors global meromorphic
sections of $E$ with specific constraints on their divisor.
Let $H_0\subset H$ be the subset of trivial holomorphic bundles in $\V$.
The following result is well known (\cite{Sa}, p.?)

\begin{lemm}\label{triv1}
Let $\F\in H$ be a holomorphic vector bundle in $(\V,\na)$. If $\F$ is trivial,
then the space $Y_{\F}=\Gamma(X,\F)$ of global sections is a $\C$-vector space of dimension $n$.
For any logarithmic singularity $s\in \dd_{\log}(\F)$, the residue $\r_s^{\F}\na$ induces a
well-defined endomorphism $\psi_s\in \End_{\C}(Y_{\F})$.
\end{lemm}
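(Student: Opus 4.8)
The plan is to put the triviality hypothesis to work in the one spot where it is needed: it turns the evaluation map at any point of $X$ into an isomorphism between the space $Y_{\F}$ of global sections and the fibre there, and $\psi_s$ is then simply \emph{defined} as the residue endomorphism of the fibre $\F_s/\mathfrak{m}_s\F_s$, read through this canonical identification.

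First I would dispose of the dimension statement. As $\F$ is trivial, $\F\simeq\ox^n$, so $Y_{\F}=\Gamma(X,\F)\simeq\Gamma(X,\ox)^n$; since $X$ is a compact connected Riemann surface, $\Gamma(X,\ox)=\C$, whence $\dim_{\C}Y_{\F}=n$. More sharply, an isomorphism $\F\simeq\ox^n$ carries the standard sections to a family $e_1,\dots,e_n$ which is simultaneously a $\C$-basis of $Y_{\F}$ and a free $\ox$-basis of $\F$; equivalently the multiplication map $\ox\otimes_{\C}Y_{\F}\to\F$ is an isomorphism. Consequently, for \emph{every} $x\in X$ the reduction map
\[
\mathrm{ev}_x\colon\ Y_{\F}\ \longrightarrow\ \F_x/\mathfrak{m}_x\F_x
\]
is an isomorphism of $n$-dimensional $\C$-vector spaces — it is onto because $\ox^n$ is globally generated, and both sides have dimension $n$; concretely $e_1(x),\dots,e_n(x)$ is a basis of the fibre. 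This is the only place where triviality enters.

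It remains to produce $\psi_s$ for $s\in\dd_{\log}(\F)$, and here there is no real obstacle. Recall that $\r_s^{\F}\na$ is, by construction, a well-defined element of $\End(\F_s/\mathfrak{m}_s\F_s)$: writing $\na=d+\omega$ in a holomorphic frame near $s$, with $\omega$ a matrix of $1$-forms logarithmic at $s$, it is the class of $\mathrm{res}_s\,\omega$, and a change of frame by $g\in\mathrm{GL}_n(\ox_s)$ replaces $\mathrm{res}_s\,\omega$ by $g(s)\,(\mathrm{res}_s\,\omega)\,g(s)^{-1}$, leaving the fibre endomorphism unchanged. I then set $\psi_s:=\mathrm{ev}_s^{-1}\circ(\r_s^{\F}\na)\circ\mathrm{ev}_s\in\End_{\C}(Y_{\F})$. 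Spelt out: take the frame $e_1,\dots,e_n$ of $\F$ near $s$ coming from a basis of $Y_{\F}$, write $\na e_j=\sum_i\omega_{ij}e_i$ with $\omega_{ij}$ logarithmic at $s$; then $\psi_s(e_j)=\sum_i\mathrm{res}_s(\omega_{ij})\,e_i$. The one thing to verify is independence of the chosen basis of $Y_{\F}$, but two such bases differ by a constant $g\in\mathrm{GL}_n(\C)$, for which $dg=0$, so $\mathrm{res}_s\,\omega$ transforms by $g$-conjugation — exactly as the matrix of a fixed endomorphism of $Y_{\F}$ must. Hence $\psi_s$ is well defined; the only point worth stating with care is that $\mathrm{ev}_s$ is canonical, so that transporting the residue through it creates no ambiguity.
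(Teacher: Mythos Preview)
Your proof is correct. The paper does not actually give a proof of this lemma: it is stated as a well-known result with a reference to Sabbah~\cite{Sa}, so there is nothing to compare against. Your argument --- using triviality to identify $Y_{\F}$ with each fibre via the evaluation map, and then transporting the residue endomorphism through this identification --- is precisely the standard one. One cosmetic point: the paper's convention is $\na e_j=-\sum_i e_i\otimes A_{ij}$, so your $\omega_{ij}$ would be $-A_{ij}$, but this does not affect the argument.
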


\paragraph{Stalks and Lattices.}
For any $x\in X$, the stalk $\F_x$ of a holomorphic
vector bundle is a free $(\ox)_x$-submodules of rank $n$ (or {\em lattice})
of the stalk $V_x=\V_x$, which is a vector space of dimension $n$
over the fraction field $K_x$ of $(\ox)_x$. Let $\ra_x$ be the set of
lattices of $V_x$. We define an equivalence relation $R_x$ on $H$ as
$$(\F,\ti{\F})\in R_x\mbox{ if and only if }\F_{\vert
X\backslash\{x\}}=\ti{\F}_{\vert X\backslash\{x\}}.$$
For simplicity, we will drop the index $x$ as soon as no ambiguity
can be feared. Any coset of $H/R_x$ can be identified with the set
$\ra_x$, by identifying a vector bundle $\E$ in a given coset of
$H/R_x$ with its stalk $\E_x\in\ra_x$ at $x$.

Actually, since $X$ is compact, two vector bundles $\E,\F\in H$ have
equal stalks outside a finite set $\Delta(\E,\F)$.

\begin{lemm}
\label{coco}
Let $\E\in H$ be a holomorphic vector bundle. For any
family of lattices $M_x\in \ra_x$ for $x$ in a discrete set $\dd$,
there exists
a unique vector bundle $\E^M\in H$ such that
$$(\E^M)_x=\left\{ \begin{array}{l}
\E_x \textrm{ if } x \not\in \dd\\
M_x\textrm{ if } x \in \dd
\end{array}\right.$$
Moreover, for any $\F\in H$, there exists such a discrete set $\dd$ and
a family $(M_x\in \ra_x)_{x\in \dd}$ of lattices such that $\F=\E^M$.
If $\E$ is endowed with a meromorphic connection $\na$, there is
a canonical extension $\na^M$ of $\na_{\vert X\backslash \dd}$ 
as a meromorphic connection  on $\E^M$.
\end{lemm}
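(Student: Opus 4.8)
The plan is to build $\E^M$ by the standard gluing of locally free sheaves, since the prescribed data $(M_x)_{x\in\dd}$ differs from $\E$ only on a discrete set. First I would fix, for each $x\in\dd$, the two cosets in play: the local ring $(\ox)_x$, its fraction field $K_x$, the ambient vector space $V_x$, and inside it both $\E_x$ and $M_x$. On the punctured surface $X\setminus\dd$ I set $(\E^M)_{|X\setminus\dd}=\E_{|X\setminus\dd}$. On a small coordinate disc $U_x$ around each $x\in\dd$ (chosen disjoint, which is possible since $\dd$ is discrete), I define $\E^M$ to be the unique locally free $\ox$-module on $U_x$ whose stalk at $x$ is $M_x$ and which agrees with $\E$ on $U_x\setminus\{x\}$: concretely, $M_x$ is a free $(\ox)_x$-module spanning $V_x$ over $K_x$, so a basis of $M_x$ extends to a holomorphic frame on a possibly smaller punctured disc, and since $\E_{|U_x\setminus\{x\}}$ and this frame generate the same $\mx$-module they generate the same $\ox$-submodule of $\V$ away from $x$ after shrinking $U_x$. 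This gives a sheaf on each $U_x$ agreeing with $\E_{|X\setminus\dd}$ on the overlaps $U_x\setminus\{x\}$, so the sheaves glue to a locally free $\ox$-module $\E^M$ of rank $n$, i.e. an element of $H$, with the required stalks; uniqueness is immediate because a subsheaf of $\V$ is determined by its stalks.

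For the second assertion, given $\F\in H$, the set $\dd=\Delta(\E,\F)$ of points where the stalks differ is finite (hence discrete) by the remark preceding the statement, so taking $M_x=\F_x$ for $x\in\dd$ we get $\F$ and $\E^M$ agreeing stalk-wise everywhere, hence $\F=\E^M$ as subsheaves of $\V$.

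For the connection, the key point is that $\na$ is already a \emph{meromorphic} connection on $\V$, so $\na$ makes sense on all of $\V$; restricting to $X\setminus\dd$ it preserves $\E_{|X\setminus\dd}=(\E^M)_{|X\setminus\dd}$ by hypothesis, and on each $U_x$ the connection $\na$ on $\V_{|U_x}$ automatically sends $\E^M_{|U_x}$ into $\E^M_{|U_x}\otimes\omdiv{\div'}$ for a suitable local divisor $\div'$ (one just reads off the pole order of $\na$ in the new frame given by a basis of $M_x$). Gluing these local meromorphic connections, all of which are the single connection induced by $\na$ on $\V$, yields $\na^M$ on $\E^M$; it restricts to $\na$ on $X\setminus\dd$ by construction, and it is unique among meromorphic connections on $\E^M$ with that restriction since two such differ by a global $\End$-valued meromorphic $1$-form vanishing off $\dd$, which shrinking discs forces to vanish.

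The only genuinely delicate point is the gluing near each $x\in\dd$: one must check that a frame of $M_x$ does extend to a holomorphic (and holomorphically invertible, away from $x$) frame on a punctured neighbourhood and that the resulting sheaf really is locally free at $x$ — this is exactly the statement that any finitely generated torsion-free $(\ox)_x$-module spanning $V_x$ over $K_x$ is free of rank $n$, which holds because $(\ox)_x$ is a discrete valuation ring. Everything else is bookkeeping with sheaves of submodules of the fixed meromorphic bundle $\V$.
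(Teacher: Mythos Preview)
Your argument is correct and is the standard one: define the modified sheaf by gluing a local frame of $M_x$ (extended from the stalk to a small disc) to $\E_{|X\setminus\dd}$ over the punctured discs, invoke freeness over the DVR $(\ox)_x$ to guarantee local freeness at $x$, and read the connection off from the ambient meromorphic bundle $\V$. The paper itself states this lemma without proof, treating it as a routine fact about locally free subsheaves of a fixed meromorphic bundle, so there is no alternative approach to compare against; your write-up simply fills in the omitted details. One minor remark: your uniqueness argument for $\na^M$ via ``shrinking discs'' is slightly roundabout---the cleaner statement is that $\na$ is already globally defined on $\V$, so its restriction to any $\E^M\subset\V$ is the canonical (and only) extension compatible with $\na_{|X\setminus\dd}$, by the identity principle for meromorphic forms.
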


The sheaf $\V$ is always trivial, and the group $G$ of (meromorphic) 
automorphisms of the space
$\Gamma(X,\V)$ is isomorphic to $\G(\C(X))$. Let $\ra_x^0=\ra_x\cap
H_0$ be the
set of trivial bundles in the coset $\ra_x$. The subgroup $G_x\subset G$ of
automorphisms of $\Gamma(X,\V)$ that leave $\ra_x^0$ globally invariant is
called the {\em group of monopole gauge transforms at $x$}. Each element
of $G_x$ sends a trivial sheaf $\F$ to a trivial sheaf $\ti{\F}$ such that
 $\F_{\vert X\backslash\{x\}}=\ti{\F}_{\vert X\backslash\{x\}}$. 
An element of $G_X$ modifies at most the stalk $\F_{x}$.

%%%%%%%%%%%%%%%%
\section{Lattices and the Affine Building of $\SL_n$}
\label{sdc}
%%%%%%%%%%%%%%%%

In this section, we fix a point $x\in X$ and a coset $\ra\in H/R_x$. We
drop the index $x$ for simplicity. The field $K$ is local, and endowed
with the discrete valuation $v=\o_x$, whose valuation ring is $\h$,
whose maximal ideal is $\id$ and residue field $k=\C$. As already mentioned,
$V$ is a $K$-vector space of dimension $n$.

\paragraph{Flags.}
Here assume that $V$ is a vector space over an arbitrary field 
of characteristic 0.
Given a flag $\F$ of vector spaces $0=F_0\subset F_1 \subset \cdots
\subset F_s = V$ in $V$,
where $s=\abs{\F}$ is the {\em length} of the flag, the {\em
signature} of $\F$ is the integer sequence $(n_1,\ldots,n_s)$ where
$n_i=\dim\left(F_i/F_{i-1}\right)$. 
A map $u\in \en_K(V)$ is said to {\em stabilise} the flag $\F$ if
$u(F_i)\subset F_i$ for all $0\lsl i\lsl \abs{\F}$. Let $\fl(V)$ for
the set of flags of $V$, and $\fl_u(V)$ for the subset of flags that
are stabilised by $u$. Recall that a flag 
$\F':0=F'_0\subset F'_1 \subset \cdots \subset F'_s = V$ is 
said to be {\em transversal} to $\F$ if $F'_i\oplus F_{s-i}=V$ for 
$1\lsl i\lsl s$. Note that in this case, the signature of $\F'$ is
then equal to $(n_s,\ldots,n_1)$.
For any subspace $M\subset V$, let 
$\F\cap M$ denote the flag of $M$ composed of the {\em distinct} subspaces
among the $F_i\cap M$.
An {\em $\F$-admissible sequence} is an integer sequence 
$$u=(\underbrace{k_1,\ldots,k_1}_{n_1\textrm{ times}},
\underbrace{k_2,\ldots,k_2}_{n_2\textrm{ times}},\ldots,
\underbrace{k_s,\ldots,k_s}_{n_s\textrm{ times}})\mbox{ with
}k_1<\cdots<k_s.$$
Let $\Z^n(\F)$ be the set of integer $\F$-admissible sequences, and 
let $$W(V)=\{(\F,\k) \tq  \F \in \fl(V)\mbox{ and }\k\in\Z^n(\F)\}$$ be 
the set of {\em admissible pairs} of $V$. For any integer sequence $\k=(k_1,\ldots,k_n)$, 
let $$\Delta \k=\max_{i}k_i-\min_{i} k_i\mbox{ and }i(\k)=\sum_{j=1}^n(\max_{i}k_i-k_j).$$

\paragraph{Lattices.}
Let $L(u)$ denote the free $\h$-module spanned by a family $(u)$ of
vectors in $V$. An $\h$-module $M\in V$ is a lattice if there exists a
$K$-basis $(e)$ of $V$ such that $M=L(e)$.
Let $$v_{\la}(x)=\min\{k\in \Z \tq \id^kx\in \la\},$$
be the natural valuation of $V$ induced by $\la$. For any lattices
$M\subset \la$ in $V$, we define the {\em interval} $[M,\la]$ as $$[M,\la]=\{N\in\ra \tq M\subset
N\subset \la\}.$$

\paragraph{Elementary Divisors.}
Let $z$ be a uniformising parameter of $K$. For any two lattices $\la$ and
$M$ of $V$, there exists a unique
increasing sequence of integers $k_1 \leqslant \cdots \leqslant
k_n$ (the \emph{\del\! of $M$ in $\la$}) and an $\h$-basis
$(e_1, \ldots,e_n)$ of $\la$ such that $(z^{k_{1}}e_1, \ldots
,z^{k_n}e_n)$ is a basis of $M$. Such a basis $(e)$ is called a {\em Smith basis of $\la$ for $M$}. 
We will write them $k_{i,\la}(M)$ if we want to specify the
respective lattices, and we write $$\k_{\la}(M)~=~(k_{1,\la}(M), \ldots ,k_{n,\la}(M)).$$
Note that $k_{1,\la}(M)=v_{\la}(M)$, and let $M_{\la}=z^{-v_{\la}(M)}M$.
It is convenient
to be a bit more lax on the definition, and allow the \del\! to appear in another order.

The subgroup of the lattice stabiliser $\Go$ 
that acts on the set of Smith bases of $\la$ for $M$ is the {\em lattice $\k$-parabolic subgroup}
$\mf{G}_{\k}$, whose intersection with $\Gc$ is the $\k$-parabolic group $G_{\k}$ defined as
$$\mf{G}_{\k}=\{P\in \Go\tq v(P_{ij})\gsl k_i-k_j\}\mbox{ and }G_{\k}=\{P\in \Gc\tq P_{ij}\neq 0 \Rightarrow k_i\lsl k_j\}.$$
We will say for short that $P\in\mf{G}_{\k}$ (resp. $P\in G_{\k}$) is $\mf{G}_{\k}$-parabolic (resp. $\k$-parabolic).
For any sequence $\k=(k_1,\ldots, k_n)$, let $z^{\k}$ for the diagonal matrix
$\diag(z^{k_1},\cdots, z^{k_n})$. We will frequently use the following type of
diagram
$$
\xymatrix{
\la:(e) \ar[r]^-{z^{\k}} \ar[d]^-{P} & M:(\vep) \ar[d]^-{\tilde{P}} \\
\la:(e') \ar[r]^-{z^{\k}} & M:(\vep')
}
$$
which means that $(e)$ and $(e')$ are two Smith bases of $\la$ for $M$. Note
that in this case $P\in\Go$ is $\mf{G}_{\k}$-parabolic (and $\ti{P}$ is $\mf{G}_{-\k}$-parabolic, symmetrically).

Sometimes, we will find it more
convenient to consider the \del\! with their multiplicities. In this
case, we will put $\kk_1,\ldots,\kk_s$ for the distinct \del\! of $M$
in $\la$ and let $n_i$ be their respective multiplicities. The set
$[n]=\{1,\ldots,n\}$ of indices of ordinary ({\em simple}) \del\! is
partitioned into the subsets $I_j$ corresponding to a single value of
the \del\! $$I_j=\{1\lsl \ell \lsl n \tq k_{\ell}=\kk_j\}\textrm{ for
}1\lsl j \lsl s.$$

%%%%%%%%%%%%%%%%%%%
\subsection{The Affine Building of $\SL(V)$}
\label{build}
%%%%%%%%%%%%%%%%%

For this section, which is standard, good references are~\cite{Br,G}.
The affine building $\B_n$ naturally attached to $\SL(V)$ is the
following $(n-1)$-dimensional
simplicial complex. Two lattices $\la$ and $M$ are {\em homothetic} if
there exists $\alpha \in K^*$ such that $M=\alpha \la$. Let $[\la]$ be
the homothety class of the lattice $\la$ in $V$. The vertices of
$\B_n$ are the homothety classes of lattices in $V$, and an edge
connects two vertices $L$ and $\pr{L}$ if and only if there exist
representatives $\la$ of $L$ and $M$ of $\pr{L}$ such that
$\id\la\subset M \subset \la$. The affine building $\B_n$ is the
{\em flag simplicial complex} associated with this graph, or in other
terms, its {\em clique complex}. A maximal simplex, or {\em chamber} in
$\B_n$, is an $n$-chain of vertices $L_0,\ldots,L_{n-1}$ with
representatives $\la_i$ for $0\lsl i \lsl n-1$ satisfying
$$\id\la_0\subset \la_1\subset \cdots \subset \la_{n-1}\subset \la_0.$$
The natural {\em graph-theoretic distance} $d$ in $\B_n$, that is
the length of the shortest path between two lattice classes $L$ and $L'$, 
and the {\em index} of $L'$ with respect to $L$, are given by
\begin{equation}\label{d-i}
d(L,\pr{L})=k_{n,\la}(M)-k_{1,\la}(M)\mbox{ and }[L:L']=\sum_{i=1}^n\left(k_{i,\la}(M)-k_{1,\la}(M)\right)
\end{equation}
for any representatives $\la,M$ of $L,\pr{L}$. Note that $d(L,\pr{L})=-v_{\la}(M)-v_M(\la)$ also holds.

\paragraph{Geodesics.}
A {\em geodesic} is a path $\Gamma$ in $\B_n$ such that for any vertices $L,L'\in \Gamma$,
the length of the path between $L$ and $L'$ induced by $\Gamma$ is equal to
$d(L,L')$. The following result explains how to construct a geodesic algebraically.

\begin{prop}\label{geo}
Let $L,L'\in \B_n$, and let $d=d(L,L')$.
For $k\in \N$, let $L_k=[\la'+\id^k\la]$, where $\la\in L$ and $\la'\in L'$ 
are such that $v_{\la}(\la')=0$. Then $d(L_k,L_{k+1})=1$
for $0\lsl k\lsl d-1$ and $L_d=L'$.
The path $$\Gamma(L,L')=(L_0,L_1,\ldots,L_d)$$ is called the {\em
geodesic path} from $L$ to $L'$. Moreover, the geodesic path $\Gamma(L,L')$ 
is the {\em unique} path of minimal length between $L$ and
$L'$.
\end{prop}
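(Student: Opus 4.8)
The plan is to reduce everything to a statement about elementary divisors, since both the distance and the index in $\B_n$ are expressed through them by~\eqref{d-i}. First I would fix representatives $\la\in L$ and $\la'\in L'$ with $v_{\la}(\la')=0$; this normalisation is possible because for any lattices in the same homothety class we may rescale by a power of $z$, and it guarantees that the smallest elementary divisor $k_{1,\la}(\la')$ equals $0$, so $d(L,L')=k_{n,\la}(\la')=d$. Choosing a Smith basis $(e_1,\dots,e_n)$ of $\la$ for $\la'$, so that $(z^{k_1}e_1,\dots,z^{k_n}e_n)$ is an $\h$-basis of $\la'$ with $0=k_1\le\cdots\le k_n=d$, I would then compute $\la'+\id^k\la$ explicitly in this basis: it is spanned over $\h$ by the vectors $z^{\min(k_i,k)}e_i$. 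Hence $L_k=[\la'+\id^k\la]$ has, with respect to $\la$, elementary divisors $(\min(k_1,k),\dots,\min(k_n,k))$.

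From this formula everything follows by bookkeeping. Comparing $L_k$ and $L_{k+1}$: the basis $(e_i)$ is simultaneously a Smith basis of $\la$ for both $\la'+\id^k\la$ and $\la'+\id^{k+1}\la$, and the elementary divisors of $L_{k+1}$ with respect to $L_k$ are the differences $\min(k_i,k+1)-\min(k_i,k)$, each of which is $0$ or $1$; moreover at least one is $1$ precisely when some $k_i>k$, i.e.\ when $k\le d-1$, and the minimum of these differences is $0$ as soon as $k_1=0<k+1$. Therefore $d(L_k,L_{k+1})=\max_i(\text{diff})-\min_i(\text{diff})=1-0=1$ for $0\le k\le d-1$. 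For $k\ge d$ we have $\min(k_i,k)=k_i$ for all $i$, so $L_k=[\la']=L'$; in particular $L_d=L'$. Since the path $\Gamma(L,L')$ has length $d=d(L,L')$, it is a geodesic, and in fact a geodesic path between the endpoints.

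For the uniqueness statement I would argue that any path $L=N_0,N_1,\dots,N_d=L'$ of length $d$ must, at each step, ``use up'' one unit of distance toward $L'$ in a forced way. Concretely, fix $\la\in L$, $\la'\in L'$ as above and let $N_j$ have representative $\mu_j$; the key inequality is the triangle-type estimate on elementary divisors: $k_{i,\la}(\mu_j)$ can change by at most the number of edges from $N_j$ back to $L$, so after $j$ steps the largest elementary divisor is at most $j$ and, since it must reach $d$ at step $d$, it must increase by exactly one at each step, and similarly the spread cannot drop, which pins down the full elementary-divisor vector of $N_j$ with respect to $\la$ to be $(\min(k_i,j))_i$ — the same as $L_j$. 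A cleaner way to phrase this: the set of lattice classes $M$ with $d(L,M)+d(M,L')=d$ is exactly the image of the geodesic, because $d(L,M)=\max_i a_i-\min_i a_i$ and $d(M,L')$ can be read off similarly, and the two sum to $d$ only for $M$ whose elementary divisors interpolate as $\min(k_i,\cdot)$; any length-$d$ path lies inside this set step by step and is forced to be $\Gamma(L,L')$.

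\emph{Main obstacle.} The computational heart — writing $\la'+\id^k\la$ in a common Smith basis and reading off $(\min(k_i,k))_i$ — is routine. The delicate point is the uniqueness assertion: one must rule out length-$d$ paths that wander ``sideways'' inside the building before heading to $L'$. Handling this rigorously requires either a convexity property of elementary-divisor vectors along paths (each edge changes each $k_{i,\la}$ by at most $1$, and the extreme indices move monotonically on a shortest path), or an appeal to the $\mathrm{CAT}(0)$/non-positive-curvature structure of the affine building, where geodesics between points are unique by general principles. I would favour the hands-on elementary-divisor argument to keep the proof self-contained, and I expect that step to be where the real care is needed.
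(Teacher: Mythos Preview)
Your treatment of the existence part --- computing $\la'+\id^k\la$ in a Smith basis and reading off the elementary divisors $(\min(k_i,k))_i$ --- is correct and is essentially what the paper does as well (it leaves this verification to the reader).

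The uniqueness argument, however, has a genuine gap, and you were right to flag it as the obstacle: it cannot be closed, because the uniqueness claim is \emph{false} for $n\geq 3$. Take $n=3$, $\la=\h e_1\oplus\h e_2\oplus\h e_3$ and $\la'=\h e_1\oplus\h ze_2\oplus\h z^2e_3$, so that $d=2$. The path $\Gamma(L,L')$ passes through $L_1=[\h e_1\oplus\h ze_2\oplus\h ze_3]$. But setting $N=\h e_1\oplus\h e_2\oplus\h ze_3$, one checks directly that $\id\la\subset N\subset\la$ and $\id N\subset\la'\subset N$, so $[N]$ is adjacent to both endpoints and $(L,[N],L')$ is a second minimal path with $[N]\neq L_1$. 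Your heuristic that the elementary-divisor vector of the $j$-th vertex is forced to be $(\min(k_i,j))_i$ already fails here: $N$ has elementary divisors $(0,0,1)$ in $\la$, not $(0,1,1)$. And even if the vector \emph{were} pinned down, two distinct lattice classes can share the same elementary divisors relative to $\la$, so that alone would not suffice. Your alternative description of the geodesic locus as $\{M:d(L,M)+d(M,L')=d\}$ is refuted by the same $N$. The $\mathrm{CAT}(0)$ route does not help either: non-positive curvature yields unique geodesics in the metric realisation of the building, not in the $1$-skeleton with the graph distance.

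For context, the paper's own proof breaks at the corresponding step: it asserts that $\la'+\id^{d-1}\la$ is the \emph{largest} lattice containing $\la'$, contained in $\la$, and adjacent to $\la'$; in the example above the largest such lattice is actually $N=z^{-1}\la'\cap\la$, which strictly contains $\la'+\id\la$. The defect lies in the statement, not in your strategy: $\Gamma(L,L')$ is a canonical geodesic, just not the only one.
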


\begin{proof}
The existential part of the lemma is easy to verify by using Smith bases of the
representatives $\la$ and $\la'$, and is left to the reader.
Note that the geodesic interval is symmetric. Indeed, letting 
$\Gamma(L',L)=(L'_0,L'_1,\ldots,L'_d)$, we have
$L_k=[M_{\la}+z^k\la]$ and $L'_{d-k}=[\la_{M}+z^{d-k}M]$.
By definition we have
\begin{eqnarray*}
\la_{M}+z^{d-k}M&=&z^{-v_M(\la)}\la+z^{-v_M(\la)-v_{\la}(M)-k}M \\
 &=&z^{-v_M(\la)-k}\left(z^k\la+z^{-v_{\la}(M)}M\right).
\end{eqnarray*}
Therefore $L'_{d-k}=L_k$.

Let us prove the uniqueness by induction on the distance $d=d(L,L')$.
For convenience, let any path \mbox{$([\la]=L_0,L_1,\ldots,L_{d-1},[M]=L_d)$}
be represented by its {\em normalised} sequence
$(\la,M_1,\ldots,M_{d-1},M_{d})$ of lattices $M_i\in L_i$ such that $v_{\la}(M_i)=0$.
We will first prove the following result: if $\Gamma'=(\la=L_0,L_1,\ldots,L_{d-1},M=L_d)$ 
is a path of minimal length, then the normalised sequence of lattices satisfies
$\la\supset M_1\supset \cdots \supset M_d$.
For $d=1$, this is the very definition of adjacency in $\B_n$.
Assuming that the claim is established for any pair of lattices at distance $\lsl d-1$, we have
$M_d\subset\la\supset M_1\supset \cdots \supset M_{d-1}$ for the normalised sequence of $\Gamma'$.
Since $M_{d-1}$ and $M_d$ are adjacent, there exists a unique $k\in \Z$ such that 
$z^k M_{d-1}\supset M_d\supset z^{k+1}M_{d-1}$. We know that $d(\la,M_{d-1})=d-1$, hence we have
$\la\supset M_{d-1}\supset z^{d-1}\la$, therefore we get 
$$z^k\la\supset z^kM_{d-1}\supset M_d\supset z^{k+1}M_{d-1}\supset z^{d+k}\la.$$
If $k>0$, then $v_{\la}(M)\gsl k >0$, which was excluded by assumption. But if $k<0$, then
$d(\la,M)<d$, which is also excluded. Thus we have $k=0$, and the claim is proved.

Now we turn to the proof of the uniqueness of the geodesic.
Since the claim is obvious for $d=1$, let us suppose that there exists a unique geodesic 
between any pair of vertices in $\B_n$ distant at most of $d-1$. 
Suppose then that $d(\la,M)=d$.
Let $\la=L_0\supset L_1\supset \cdots\supset L_{d-1}\supset M$ represent 
a path of minimal length, and $\la=M_0\supset M_1\supset \cdots\supset M_{d-1}\supset M$ the geodesic path from $\la$ to $M$.
By assumption, $d(\la,L_{d-1})=d-1$, therefore we have $\la\supset L_{d-1}\supset z^{d-1}\la$, 
and by definition, we have $M_{d-1}=M+z^{d-1}\la$. Therefore, we get 
\begin{eqnarray*}
M_{d-1}\cap L_{d-1}&=&(M+z^{d-1}\la)\cap L_{d-1}\\
                   &=&M+(z^{d-1}\la \cap L_{d-1})\mbox{ because }M\subset L_{d-1}\\
                   &=&M+z^{d-1}\la\mbox{ since }z^{d-1}\la\subset L_{d-1}\\
                   &=&M_{d-1}
\end{eqnarray*}
Thus $M_{d-1}\subset L_{d-1}$ holds. On the other hand, $M_{d-1}$ is the largest lattice
containing $M$, contained in $\la$ and adjacent to $M$. Since $L_{d-1}$ also satisfies
these conditions, we finally get $L_{d-1}=M_{d-1}$. By the induction assumption, the 
two geodesics coincide all along.
\end{proof}

If $\k=(k_1=0,\ldots,k_n)$ represent the sequence of \del\! of $M$ in $\la$, then the
elements $M_k$ of the (normalised sequence of the) geodesic path from $\la$ to $M$ have 
as \del\! in $\la$ the sequence $\k_k=(\min(k_i,k))$. The differences $T_k=\k_{k+1}-\k_k$
form what we will call the {\em elementary splitting $\k=T_1+\cdots+T_d$ of $\k$}. We have then
$$
\xymatrix{
\la\ar[r]^-{z^{T_1}}&M_1\ar[r]^-{z^{T_2}}&M_2\cdots M_{d-1}\ar[r]^-{z^{T_{d}}}&M
}
$$

\paragraph{Apartments.}
Let $\Phi=\{d_1,\ldots,d_n\}$ be an unordered set of one dimensional
$K$-vector subspaces of $V$ such that $d_1+\cdots +d_n =V$ ($\Phi$ is
called a {\em frame}). The set
$$[\Phi]=\{\la=\ell_1+\cdots+\ell_n \tq \ell_i\textrm{ is a lattice in }d_i\}$$
of lattices spanned over multiples of the vectors in $\Phi$ induces a
simplicial subcomplex in the affine building $\B_n$ called the {\em apartment}
spanned by $\Phi$.
For any lattice $\la\in \ra$, a {\em $\la$-basis} 
of the apartment $[\Phi]$ is a collection $(u_1,\ldots,u_n)$ of vectors such that
$u_i$ spans $d_i$ and
$v_{\la}(u_i)=0$. Such a family is unique up to permutation and to
multiplication of each $u_i$ by a scalar $\lb_i\in \h^*$. The
lattice is an element of the apartment $[\Phi]$ if and only if the
family $(u)=(u_1,\ldots,u_n)$ is actually a basis of the lattice $\la$. 
Equivalently, and without reference to a basis,
this means that $$\la=\bigoplus_{i=1}^n \la \cap d_i.$$ In
the general case, the lattice $\la_{\Phi}=\bigoplus_{i=1}^n \la \cap
d_i$ is the {\em largest sublattice} of $\la$ in the apartment $[\Phi]$.
The homothety class $L_{\Phi}=[\la_{\Phi}]$ is therefore the closest
point projection of $L=[\la]$ on $[\Phi]$.
Finally note that if $L,L'\in[\Phi]$, then $[\Phi]$ contains the whole geodesic path
$\Gamma(L,L')$.

\paragraph{Quotients.}
For vertices $L,L'\in \B_n$, we define the quotient $L'/L$ as the quotient 
module $\la'/\la$, where $\la\in L$ and $\la'\in L'$ satisfy $v_{\la}(\la')=0$.
We will sometimes say, for shortness' sake, that $\la,\la'$ are {\em $L$-normalised 
representatives} of $L,L'$. The quotient $L'/L$ is a 
well defined finite-dimensional $\C$-vector space. For any lattices satisfying
$\la'\subset N\subset \la$, let 
\begin{equation}\label{q}
\Psi_{\la,\la'}(N)=\left((N+\la')\cap\la\right)/\la'. 
\end{equation}

Let $E$ be the set of linear subspaces of $L'/L$, and 
$$[L',L]=\{M\in \B_n \tq \exists \la\in L,\la'\in L',N\in M,L\mbox{-normalised, 
such that } \la'\subset N\subset \la\}.$$
Formula (\ref{q}) defines a mapping $\appto{\Psi_{L,L'}}{\B_n}{E}$
which induces a poset isomorphism from $[L',L]$ to $F$, where
$F=\{ G\in E \tq \id G\subset G\}\subset E$. We will consider
the space $L'/L$ as a sub-$\C$-vector space of $\ov{L}^d=\la/\id^d\la$,
where $d=d(L,L')$. This definition is independent of the choice of $\la$ inasmuch
as there is a canonical isomorphism between $\la/\id^d\la$ and $\id^k\la/\id^{d+k}\la$.
In the special case $d=1$, the quotient space denoted with $\ov{L}=\la/\id\la$ is a $\C$-vector space of dimension $n$. 
We write $\Psi_L$ for  the isomorphism of simplicial complexes defined by relation (\ref{q}) between the set of neighbours
$\mathrm{lk}(L)$ of $L=[\la]$ in $\B_n$ (called the {\em link} of $L$) 
and the set $E$ of chains of linear subspaces of $\ov{L}$.

%%%%%%%%%%%%%%
\subsection{Relative Flag of a Lattice}
\label{RelFlag}
%%%%%%%%%%%%%%

Any lattice $M$ induces a natural flag in $\ov{\la}=\la/\id\la$ in the
following
way. For any $k\in\Z$, let $$M_k=(\id^{-k}M\cap \la)+\id\la \in
[\id\la,\la].$$
Let $(e)$ be a basis of elementary divisors of $\la$ for
$M$, and $I=\{1\lsl i \lsl n \tq k_i\lsl k\}$. Then $M_k$ admits $(u)$ as
basis where $u_i=e_i$ if $i\in I$ and $u_i=ze_i$ if $i\notin I$.
The spaces $M_k$ are thus embedded lattices, all belonging to
the interval $[\id\la,\la]$, so they take at most $n+1$ different
values. Their images $\ov{M}_k$ in the quotient space $\ov{\la}$ form a flag
$F_{\la}(M)$, and it is clear that $\ov{M}_{k-1}\subsetneq \ov{M}_{k}$
if and only if
$k$ is an elementary divisor of $M$. Let therefore $\kk_1,\ldots,\kk_s$ be
the distinct elementary divisors of $M$ in $\la$, with multiplicities
$n_i$. The subset of
indices corresponding to $\kk_j$ can
be written as $$I_j=[n_1+\cdots+n_j,n_1+\cdots+n_{j+1}-1].$$ The
lattices $M_k$ and $M_{\ell}$ coincide if and only if there exists $i$
such that $\kk_i\lsl k,\ell
<\kk_{i+1}$ (with the conventions $\kk_0=-\infty$ and $\kk_{s+1}=+\infty$).
Therefore the flag $F_{\la}(M)$ has exactly length $s$, and its signature is
equal to the sequence $(n_1,\ldots,n_s)$.
Its components can be indexed either as $\ov{M}_{\kk_i}$, by the value of the
elementary divisor $\kk_i$ it is attached to (if known), or
as $\ov{M}_i$ by its index in the flag (here $i$). In this latter
case, we will also
use the notation $F_{\la}^i(M)$. It will hopefully be always
clear what convention we are using.

Note that the flag $F_{\la}(M)$ corresponds under the isomorphism
$\Psi_{[\la]}^{-1}$ to a canonical simplex in $\B_n$ containing the 
vertex $L=[\la]$. Modulo homothety, one can define the flag 
$F_L(L')$ in the space $\ov{L}$ defined in section~\ref{build},
and the following result holds.

\begin{lemm}
\label{geo:flag}
Let $L,L'\in \B_n$ be vertices in $\B_n$, let $d=d(L,L')$ and let $(L_0,\ldots,L_d)$ be 
the geodesic interval $\Gamma(L,L')$. For $0\lsl k \lsl d$, the flag $F_L(L_k)$ is given by
$$F_L(L_k):F_{\la}^0(M)\subset\cdots\subset F_{\la}^{\nu}(M)\subset \ov{L},$$ where $\nu$ is the index such that
$k+v_{\la}(M)\in I_{\nu}$, for any representatives $\la\in L$ and $M\in L'$.
Moreover, the flags $F_{L_k}(L)$ and $F_{L_k}(L')$ in $\ov{L_k}$ have
supplementary first components if $k$ is a normalised elementary
divisor of $L'$ in $L$.
\end{lemm}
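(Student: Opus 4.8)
The plan is to reduce everything to the explicit description of the geodesic in terms of elementary divisors already obtained after Proposition \ref{geo}. Recall that if $\la \in L$ and $M \in L'$ are $L$-normalised (so $v_\la(M)=0$, with $\del$ $\k = (0=k_1 \lsl \cdots \lsl k_n)$ and Smith basis $(e)$), the normalised representatives along the geodesic are $M_k = z^{-v_\la(M_k)}\,(M + z^k\la)$, whose $\del$ in $\la$ are $\k_k = (\min(k_i,k))_i$. First I would take the Smith basis $(e)$ of $\la$ for $M$ as a common working basis: in this basis $M_k$ has basis $(u^{(k)})$ with $u^{(k)}_i = e_i$ if $k_i \lsl k$ and $u^{(k)}_i = z^k e_i$ otherwise, directly from the splitting $\k = T_1 + \cdots + T_d$. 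The key point is then that the relative flag $F_L(M_k)$ is, by the very definition in section \ref{RelFlag} applied to the pair $(\la, M_k)$, governed by the distinct values of $\k_k$; and the nested spaces $\ov{(M_k)}_j \subset \ov{L}$ are spanned by exactly those $\ov{e_i}$ with $k_i$ below the relevant threshold. Since the thresholds for $M_k$ are the values of $\k_k$ that are $\lsl$ some cutoff, and these coincide with the initial segment $\kk_1 < \cdots < \kk_\nu$ of the distinct $\del$ of $M$ — where $\nu$ is characterised by $k \in I_\nu$ (equivalently $k + v_\la(M) \in I_\nu$ in the un-normalised statement) — one reads off $F_L(M_k) : F_\la^0(M) \subset \cdots \subset F_\la^\nu(M) \subset \ov{L}$. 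This is essentially a bookkeeping argument once the Smith basis is fixed, and I would present it as such, leaving the index juggling to the reader in the style of the Proposition \ref{geo} proof.

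For the second assertion, I would use the symmetry of the geodesic interval established in the proof of Proposition \ref{geo}, namely $L'_{d-k} = L_k$ with $\Gamma(L',L)=(L'_0,\ldots,L'_d)$. Fix $k$ a normalised $\del$ of $L'$ in $L$, i.e.\ $k = \kk_\nu$ for some $\nu$, and work in $\ov{L_k}$. I would compute both first components: $F_{L_k}(L)$ is the first component of the relative flag of $L$ in $L_k$, and $F_{L_k}(L')$ the first component of that of $L'$ in $L_k$. Using the common Smith basis, $L_k$ has basis $(u^{(k)})$; the $\del$ of $\la$ in $M_k$ and of $M$ in $M_k$ are the two "opposite halves" of the splitting. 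Concretely, $\la = M_k + (\text{stuff in } z\text{-directions})$ and $M = z\text{-multiple of } M_k + \cdots$, so that modulo $z M_k$ the first graded piece coming from $L$ is spanned by $\{\ov{u^{(k)}_i} : k_i < k\}$ while that coming from $L'$ is spanned by $\{\ov{u^{(k)}_i} : k_i \gsl k\}$ — wait, I need to be careful here with the direction: the first component $F_{L_k}(L)$ corresponds to the smallest $\del$ of $\la$ in $M_k$, and $F_{L_k}(L')$ to the smallest $\del$ of $M$ in $M_k$, and these index sets are complementary precisely because $k$ is one of the $\kk_j$, so no index $i$ has $k_i = k$ "straddling" the cut in a way that would make the two spaces overlap. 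Hence the two first components are spanned by complementary subfamilies of the basis $(\ov{u^{(k)}_i})$ of $\ov{L_k}$ and are therefore supplementary.

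The main obstacle I anticipate is not conceptual but notational: keeping straight the three normalisations (of $\la$, of $M_k$, of $M$) and the shift by $v_\la(M)$ in the un-normalised version of the statement, and making sure the ``first component'' conventions for $F_{L_k}(L)$ versus $F_{L_k}(L')$ are read off with the correct orientation — one of them is naturally the flag attached to the largest $\del$, transversal in the sense recalled in the Flags paragraph (signature reversed). I would therefore insert one clean sentence pinning down, via a Smith basis of $\la$ for $M$, the explicit bases of $M_k$, of $\la$ as a lattice over $M_k$, and of $M$ as a lattice over $M_k$, and then let the two claims fall out by inspection, exactly parallel to how the elementary splitting diagram after Proposition \ref{geo} was handled.
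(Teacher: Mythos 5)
Your treatment of the first assertion is fine and follows the paper's own route: work in a Smith basis $(e)$ of $\la$ for $M$, observe that $L_k$ is represented by $M^k=M+z^k\la$ with basis $(z^{\min(k_i,k)}e_i)$, so that $\k_\la(M^k)=(\min(k_i,k))_i$, and read the flag $F_L(L_k)$ off the distinct values of this sequence. This is exactly the bookkeeping the paper does.

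The second assertion is where the proposal goes wrong, and the error is not merely notational. With $(e^k)=(z^{\min(k_i,k)}e_i)$ a Smith basis of $M^k$, the elementary divisors of $\la$ and of $M$ in $M^k$ are, after normalisation, $K_1=(\max(k-k_i,0))_i$ and $K_2=(\max(k_i-k,0))_i$. The \emph{first} component of $F_{L_k}(L)$ is spanned by the $\ov{e^k_i}$ with $K_1(i)$ minimal, i.e.\ equal to $0$, i.e.\ with $k_i\gsl k$; and the first component of $F_{L_k}(L')$ by those with $K_2(i)=0$, i.e.\ $k_i\lsl k$. You have these two index sets interchanged, writing ``$k_i<k$'' for $F_{L_k}(L)$ and ``$k_i\gsl k$'' for $F_{L_k}(L')$. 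But more importantly, once the index sets are identified correctly, they are $\{i\tq k_i\gsl k\}$ and $\{i\tq k_i\lsl k\}$, whose intersection is $\{i\tq k_i=k\}$. This intersection is empty exactly when $k$ is \emph{not} one of the $\kk_j$, and has cardinality $n_j>0$ when $k=\kk_j$. So the two first components are supplementary precisely when $\kk_j<k<\kk_{j+1}$, and they overlap in a subspace of dimension $n_j$ when $k=\kk_j$; this is what the paper's own proof computes, including the explicit intersection $\F_1\cap\F_1'=\langle z^{\kk_j}\ov{e}_{\nu_j},\ldots,z^{\kk_j}\ov{e}_{\nu_{j+1}-1}\rangle$. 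Your justification ``no index $i$ has $k_i=k$ because $k$ is one of the $\kk_j$'' asserts exactly the opposite of the truth: when $k=\kk_j$, there are $n_j$ such indices. It appears that the condition in the statement of the lemma itself is inverted (it should read that $k$ is \emph{not} a normalised elementary divisor, equivalently $\kk_j<k<\kk_{j+1}$), and the proposal inherits and then ``proves'' this misstatement by a sign slip in the index sets. You should flag the discrepancy between the statement and the proof in the paper rather than argue toward the stated condition.
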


\begin{proof}
Take representatives $\la,M$ of $L,L'$, and a Smith basis $(e)$ of $\la$ for $M$. Let
$\k_{\la}(M)=(k_1,\ldots,k_n)=(\kk_1I_{n_1},\ldots,\kk_sI_{n_s})$, and assume that $\kk_1=0$.
Then suitable representatives of $L_k$ are the lattices $M^k=M+z^k\la$, which
admit as bases $(e^k)=(z^{\min(k_i,k)}e_i)_{i=1,\ldots,n}$. Therefore
the \del\! of $\la$ and $M$ in $M^k$ are respectively 
$$K_1=(\max(k-k_i,0))\mbox{ and }K_2=(\max(0,k_i-k)).$$
Let $j$ be the index such that $\kk_{j}\lsl k<\kk_{j+1}$. 

We must here distinguish two cases. If $\kk_{j}<k<\kk_{j+1}$, then we have
$$K_1=(kI_{n_1},(k-\kk_2)I_{n_2}\ldots,(k-\kk_j)I_{n_{j}},0_{n_{j+1}},\ldots,0_{n_s})$$ and
$$K_2=(0_{n_1},0_{n_2},\ldots,0_{n_{j}},(\kk_{j+1}-k)I_{n_{j+1}},\ldots,(\kk_{s}-k)I_{n_{s}}).$$
Then obviously the induced flags $\F=F_{L_k}(L)$ and $\F'=F_{L_k}(L')$ have
respective signatures $(n_{j+1}+\cdots+n_s,n_j,\ldots,n_1)$ and 
$(n_1+\cdots+n_j,n_{j+1},\ldots,n_s)$. Their first components $\F_1$
and $\F_1'$ are supplementary subspaces of $L_k/\id L_k$. If $k=\kk_{j}$ however, 
we have $\F_1\cap \F_1'=<z^{\kk_j}\ov{e}_{\nu_j},\ldots, z^{\kk_j}\ov{e}_{\nu_{j+1}-1}>$.
\end{proof}

%%%%%%%%%%%%%%
\subsection{Forms}
\label{constant}
%%%%%%%%%%%%%%

Fix a lattice $\la$ and let for simplicity
$\map{\Phi_{\la}}{\ra}{W(\la/\id\la)}{M}{(\F_{\la}(M),\k_{\la}(M))}$.
This map is clearly surjective, but as clearly not injective. The
objective of this section is to show how to invert it.

Let a {\em form} in $\la$ be a $\C$-vector subspace $Y$ of $\la$
spanned by an $\h$-basis $(e)$ of $\la$. If we fix a form $Y$ in $\la$
(that is a $\C$-linear section of the canonical projection
$\appto{\pi}{\la}{\la/\id\la}$)
then there is a unique way to lift the quotient module
$\la/\id\la$ in $Y$, that is, there is a well-defined isomorphism
$$\isomto{\varphi_Y}{Y}{\la/\id\la}.$$
Let $\B(Y)$ be the sub-building of $\B_n$ composed of apartments $[\Phi]$
which are spanned by a basis of $Y$. For a given flag $F\in \fl(\la/\id\la)$, 
let us define the {\em $Y$-fiber of $F$} as $$\Psi_{Y}(F)=\{M\in \B(Y) \tq F_{\la}(M)=F\}$$ 
We will say that $Y$ is a {\em Smith form} for $M$ if $M\in\Psi_{Y}(F)$.

\begin{lemm}\label{flag:lattice}
Let $\la$ be a lattice in $V$ and $Y$ be a form in $\la$.
\begin{enumerate}[i)]
\item For any admissible pair $(F,\k)$ of $\la/\id\la$, there exists a 
unique lattice $M=\Psi_Y(F,\k)$ in $\Psi_{Y}(F)$ such that $\k_{\la}(M)=\k$.
\item For any basis $(e)$ of the lattice $\la$, there
exists a {\em unique} $\C$-basis $(e_Y)$ of the form $Y$ whose image
in $\la/\id\la$ coincides with the image of $(e)$. 
\end{enumerate}
We call $(e_Y)$ the {\em $Y$-basis of $(e)$}.
\end{lemm}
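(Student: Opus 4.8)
Both statements are about lifting data from the residue quotient $\la/\id\la$ back to $V$ once a form (i.e. a $\C$-splitting of $\pi\colon\la\to\la/\id\la$) has been fixed, so the natural strategy is to work in a basis adapted to the given flag and use the isomorphism $\varphi_Y\colon Y\isomto\la/\id\la$ to transport everything. I would prove ii) first, since it is the computational backbone, and then deduce i) from it.

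\medskip

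For ii): given an $\h$-basis $(e)=(e_1,\dots,e_n)$ of $\la$, consider the images $\ov e_i=\pi(e_i)$, which form a $\C$-basis of $\la/\id\la$ (because $(e)$ is an $\h$-basis, the $\ov e_i$ span and, by rank count, are independent). Since $Y$ is a form, $\varphi_Y\colon Y\to\la/\id\la$ is an isomorphism, so set $e_{Y,i}=\varphi_Y^{-1}(\ov e_i)$. This is a $\C$-basis of $Y$ whose image in $\la/\id\la$ is $(\ov e_i)$, i.e. coincides with the image of $(e)$. Uniqueness is immediate: if $(e'_Y)$ is another such basis, then $e_{Y,i}-e'_{Y,i}\in Y\cap\id\la$; but $Y\cap\id\la=0$ since $\pi|_Y$ is injective (that's exactly what it means for $Y$ to be a section), so $e'_{Y,i}=e_{Y,i}$. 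That settles ii), and I would note that the $e_{Y,i}$ differ from the $e_i$ by elements of $\id\la$, which is the key point for i).

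\medskip

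For i): fix an admissible pair $(F,\k)$ of $\la/\id\la$. Recall $F$ has length $s$, signature $(n_1,\dots,n_s)$, and $\k=(\kk_1 I_{n_1},\dots,\kk_s I_{n_s})$ with $\kk_1<\dots<\kk_s$. Choose any $\h$-basis $(e)$ of $\la$ adapted to $F$ in the sense that $\ov e_1,\dots,\ov e_{n_1+\dots+n_j}$ span $F_j$ for each $j$; such a basis exists because $F$ is a flag in $\la/\id\la$ and $\pi$ is surjective. Now take its $Y$-basis $(e_Y)$ from ii), which by construction still satisfies $\pi(e_{Y,i})=\ov e_i$, hence is also adapted to $F$ and spans an apartment lying in $\B(Y)$. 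Define
$$M=\Psi_Y(F,\k)=L\bigl(z^{\kk_1}e_{Y,1},\dots,z^{\kk_s}e_{Y,n}\bigr),$$
i.e. the $\h$-span of $z^{k_i}e_{Y,i}$. Then $(e_Y)$ is by construction a Smith basis of $\la$ for $M$ with $\k_\la(M)=\k$, and comparing with the description of $F_\la(M)$ in section~\ref{RelFlag} (the images $\ov M_k$ are spanned by those $\ov e_{Y,i}=\ov e_i$ with $k_i\le k$, which is precisely $F$ since $(e_Y)$ is adapted to $F$ and $\k$ is $F$-admissible) gives $F_\la(M)=F$; so $M\in\Psi_Y(F)$. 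For uniqueness, suppose $M'\in\Psi_Y(F)$ with $\k_\la(M')=\k$. Being in $\B(Y)$, $M'$ has a Smith basis $(e')$ for $\la$ that is a $\C$-basis of $Y$; being in $\Psi_Y(F)$ forces (after reordering within each block $I_j$, which is allowed since the $\del$ are only defined up to such reordering) $\pi(e'_i)$ to span $F_j$ appropriately, so $(e')$ and $(e_Y)$ are two $Y$-bases adapted to $F$ with the same $\k$. The change-of-basis matrix between them then lies in the $\k$-parabolic $G_\k$ of section~\ref{sdc} intersected with the block-upper-triangular group preserving $F$; conjugating by $z^{\k}$ shows the two $\h$-lattices $M$ and $M'$ coincide.

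\medskip

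**Main obstacle.** The genuinely delicate point is the uniqueness in i): one has to be careful that "$M$ lies in some apartment spanned by a $Y$-basis" together with "$F_\la(M)=F$ and $\k_\la(M)=\k$" pins $M$ down exactly, despite the ambiguity of Smith bases within each eigen-block and the freedom of choosing the apartment. The clean way is to phrase it via the parabolic subgroups $\mf G_\k$, $G_\k$ already introduced: two $Y$-bases that are both adapted to $F$ and both Smith for lattices with the same $\del$ $\k$ differ by an element of $G_\k$ (a $\C$-matrix, since we stay inside $Y$) which moreover stabilises $F$, and such an element conjugated by $z^\k$ stays in $\Go$, hence does not move the lattice. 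Setting this up correctly — in particular checking that "adapted to $F$" plus "$Y$-basis" is rigid enough — is where the argument must be written with some care; the existence half, by contrast, is just the explicit construction above.
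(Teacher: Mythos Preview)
Your proof is correct and follows essentially the same approach as the paper's: construct $M$ from a $Y$-basis adapted to $F$ via the prescription $M=\bigoplus z^{k_i}\h e_i$, and establish uniqueness by observing that any two such $Y$-bases differ by a constant matrix $C$ in the parabolic $P_F=G_\k$, whence $z^{\k}Cz^{-\k}\in\Go$ and the lattices coincide. The only cosmetic difference is that you reach the adapted $Y$-basis by first choosing an $\h$-basis of $\la$ and applying part ii), whereas the paper takes a $\C$-basis of $Y$ respecting $F$ directly (which is legitimate since $\varphi_Y$ is an isomorphism and any $\C$-basis of $Y$ is automatically an $\h$-basis of $\la$); your detour is harmless but unnecessary, and your treatment of uniqueness is in fact more explicit than the paper's in checking that every $M'\in\Psi_Y(F)$ with the right elementary divisors actually admits such a $Y$-Smith basis.
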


\begin{proof}
For any $\C$-basis $(e)$ of $Y$ which respects the flag $F$, put
$M=\bigoplus_{i=1}^n z^{k_i}e_i$. Let $(\te)$ be another basis of $Y$
and $\tM=\bigoplus_{i=1}^n z^{k_i}\te_i$. The matrix of the change of
basis from $(z^{\k}e)$ to $(z^{\k}\te)$ is equal to
$P=z^{\k}Cz^{-\k}$, where $C\in \Gc$ is the matrix of the change of
basis from $(e)$ to $(\te)$. By definition of the parabolic subgroup
$P_F$, one has $z^{\k}Cz^{-\k}\in \Go\iff C\in P_F$, hence $M=\tM$ if
and only if $(e)$ and $(\te)$ both respect the flag $F$.
The second claim is straightforward. Note that the gauge from the 
basis $(e)$ to its $Y$-basis is always of the form $P=I+zU\in \Go$.
\end{proof}

The correspondence $\Psi_Y$ is therefore a bijection between the set
$W(Y)$ of admissible pairs of $Y$ and the sub-building $\B(Y)$. 
Let $F_Y(M)=\varphi^{-1}_Y(F_{\la}(M))$ be the lifting of the relative flag of
$M$ in $Y$, and define
$$\F_Y(M)=\tens{F_Y(M)}{\h}{K}.$$
The signatures of these three flags are all equal to the
multiplicities $(n_1,\ldots,n_s)$ of the original lattice $M$. Putting
all this together, we have the following definition.

\begin{defn}
Let $\la$ be a lattice in $V$, and $Y$ a form in $\la$. Let 
$M$ be a lattice in $V$, let $F=F_{\la}(M)$ be the induced flag and 
$\k=K_{\la}(M)$ its elementary divisors.
\begin{enumerate}[i)]
\item The flag $\F_Y(M)$ of $K$-vector spaces in $V$ is called the {\em $Y$-flag of $M$}.
\item The lattice $M_Y=\Psi_Y(F,\k)\in \Psi_{Y}(F)$ is called the $Y$-lattice of $M$.
\end{enumerate}
\end{defn}

For any two forms $Y$ and $\ti{Y}$, the set of gauges between bases
of $Y$and $\ti{Y}$ is an element of the double coset $\Gc\backslash\Go/\Gc$. 
Let $z$ be a uniformising parameter. With the 
convention that $\deg_z P= \infty$ if $P\in \Go\backslash\gl(\C[z])$, the
following definition makes sense.

\begin{defn}\label{form:dist}
Let $Y,\ti{Y}$ two forms in $\la$. The {\em $z$-distance}
$\delta_z(Y,\ti{Y})$ is defined as $\min(\deg_z P,\deg_z P^{-1})\in \N\cup\{\infty\}$ 
for any gauge $P$ from a basis of $Y$ to a basis of $\ti{Y}$.
\end{defn}

\begin{lemm}
\label{poly:form}
If $d=d(\la,M)$, then for any form $Y$ of $\la$, and any uniformising
parameter $z$, there exists a Smith form
$\ti{Y}$ for $M$ at a $z$-distance $\delta_z(Y,\ti{Y})\lsl d-1$.
\end{lemm}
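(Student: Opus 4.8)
I would prove Lemma~\ref{poly:form} by an explicit construction: starting from the given form $Y$, I build a chain of forms $Y=Y^{(0)}, Y^{(1)}, \ldots, Y^{(d-1)}$, where at each step I adjust the previous form so that it ``sees'' one more piece of the elementary-divisor data of $M$, and where each one-step adjustment is realized by a gauge of $z$-degree exactly $1$ (of the type $P=I+zU$ already mentioned in the proof of Lemma~\ref{flag:lattice}). Composing $d-1$ such gauges gives a gauge of $z$-degree $\lsl d-1$ from a basis of $Y$ to a basis of the final form $\ti Y$, and by construction $\ti Y$ will be a Smith form for $M$.

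\textbf{Key steps.} Fix representatives and a Smith basis $(e)$ of $\la$ for $M$, with normalised elementary divisors $0=\kk_1<\kk_2<\cdots<\kk_s$ and the elementary splitting $\k=T_1+\cdots+T_d$ from the geodesic path $\Gamma(\la,M)$, so that $\la\xrightarrow{z^{T_1}}M_1\xrightarrow{z^{T_2}}\cdots\xrightarrow{z^{T_d}}M$. The first idea is that one may assume $v_\la(M)=0$ (replace $M$ by $M_\la$; this does not change forms, Smith bases, or $z$-distances since it only rescales), so the geodesic has exactly $d$ edges and $M\subset\la$. Now I proceed by induction on $d$. For $d=0$ there is nothing to do: $M=\la$ and every form of $\la$ is trivially a Smith form (the flag is trivial). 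For the inductive step, consider the first lattice $M_1=M+z\la$ on the geodesic; it lies in $[z\la,\la]$, so its image $\ov{M_1}\subset\ov\la$ is a single subspace (a length-$\lsl 1$ flag), and $d(M_1,M)=d-1$. Using Lemma~\ref{flag:lattice}(ii), I replace $Y$ by its ``$M_1$-adapted'' form $Y'$: a form of $\la$ whose associated $\C$-basis respects the flag $\ov{M_1}$ and for which $M_1=\Psi_{Y'}(\ov{M_1},T_1)$ — equivalently $Y'$ is a Smith form for $M_1$. The gauge $Y\to Y'$ has degree $\lsl 1$ because adjusting a basis of $\la$ to respect a single subspace of $\ov\la=\la/\id\la$ is done by a matrix of the form $I+zU$. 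Then $Y'$ is in particular a form of $\la$, hence — viewing $M_1$ as the new ambient lattice and noting $Y'$ is also a form of $M_1$ up to the canonical identification — I apply the induction hypothesis to the pair $(M_1,M)$ at distance $d-1$ to get a Smith form $\ti Y$ for $M$ with $\delta_z(Y',\ti Y)\lsl d-2$. Composing, $\delta_z(Y,\ti Y)\lsl 1+(d-2)=d-1$, and since a Smith form for $M$ relative to $M_1$ is again a Smith form for $M$ relative to $\la$ (the relative flag $F_\la(M)$ refines $F_\la(M_1)$ compatibly, by Lemma~\ref{geo:flag}), we are done.

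\textbf{Main obstacle.} The delicate point is the bookkeeping in the inductive step: I must check that ``Smith form for $M$ with respect to $M_1$'' really does coerce into ``Smith form for $M$ with respect to $\la$,'' i.e. that the relative flag $F_\la(M)$ is correctly recovered, and that the form $Y'$ of $\la$ genuinely restricts to a form of $M_1$ so that the induction applies to a lattice that is no longer $\la$ itself. This is where Lemma~\ref{geo:flag} is essential — it tells me exactly how $F_\la(M_k)$ and $F_{M_k}(M)$ fit together along the geodesic — but one has to be careful that the uniformising parameter $z$ is the same throughout and that degrees of gauges compose additively (which is just $\deg_z(PQ)\lsl\deg_z P+\deg_z Q$). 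A secondary subtlety is that $\delta_z$ is defined via a double coset and the $\min$ with $\deg_z P^{-1}$: since each elementary gauge is $I+zU$ with $U$ nilpotent up to conjugation — or more simply since at each stage we can arrange the geodesic direction so that the relevant gauge, not its inverse, has small degree — the bound is on $\deg_z P$ directly, and passing to the $\min$ only helps. I expect the whole argument to be short once the identification of ambient lattices along the geodesic is set up cleanly.
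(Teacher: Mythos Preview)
Your inductive scheme along the geodesic has a real gap at exactly the point you flagged as the ``main obstacle,'' and I do not think it closes without substantially changing the argument. When you pass from $\la$ to $M_1$ you need a form \emph{of $M_1$} to feed into the induction hypothesis; the Smith form $Y'$ you built is a form of $\la$, not of $M_1$. You can of course push $Y'$ to a form $Y_1$ of $M_1$ via a Smith basis $(e)\mapsto z^{T_1}(e)$, and the induction then hands you a gauge $Q\in\Go$ of $z$-degree $\lsl d-2$ from $Y_1$ to a Smith form $\ti Y_1$ of $M_1$ for $M$. But to pull a basis $(f)$ of $\ti Y_1$ back to a basis of $\la$ you must write $(f)=z^{T_1}(g)$, i.e.\ you need $z^{-T_1}Qz^{T_1}\in\Go$. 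Since the entries of $T_1$ are $0$'s and $1$'s, this conjugation can introduce a pole of order $1$ unless $Q$ happens to lie in the lattice parabolic $\mf G_{T_1}$ --- and nothing in your induction hypothesis guarantees that. The claim ``a Smith form for $M$ relative to $M_1$ is again a Smith form for $M$ relative to $\la$'' is therefore not justified: it confuses forms living in different lattices, and the passage between them is exactly where the degree control breaks.

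The paper's proof avoids this entirely by a one-line truncation trick. Pick \emph{any} Smith form $Y'$ of $\la$ for $M$; let $P=P_0+P_1z+\cdots\in\Go$ be the gauge from a basis of $Y$ to one of $Y'$; and let $\ov P=P_0+\cdots+P_{d-1}z^{d-1}$ be its truncation. Then $P^{-1}\ov P=I+z^{d}U$ for some $U\in\gl(\h)$, so $z^{-\k}(P^{-1}\ov P)z^{\k}\in\Go$ because $\max_i k_i-\min_i k_i=d$. This says precisely that $\ov P$ carries $Y$ to another Smith form $\ti Y$ for $M$, and $\deg_z\ov P\lsl d-1$ by construction. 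No induction, no change of ambient lattice, no geodesic bookkeeping.
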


\begin{proof}
There exists a Smith form $Y'$ of $\la$ for $M$. Let $P=P_0+P_1z+\cdots\in \Go$ 
be a gauge corresponding to a basis change from $Y$ to $Y'$. Let
$\ov{P}=P_0+\cdots+P_tz^t$, and let $\ti{Y}$ be the form obtained 
by this gauge transformation, as explained in the following scheme.
$$
\xymatrix{
\tilde{Y} \ar[d]^-{z^{\k}}& Y \ar[r]^-{P} \ar[l]_-{\ov{P}} & Y' \ar[d]^-{z^{\k}}  \\
M & & M \ar[ll]^-{Q}
}
$$
We have $Q=z^{-\k}P^{-1}\ov{P}z^{\k}=(\ti{P}_{ij}z^{k_j-k_i})$
where $\ti{P}=P^{-1}\ov{P}$. By construction,
we have $\ti{P}=P^{-1}(P-(P-\ov{P}))=I+z^{t+1}U$ with $U\in \gl(\h)$.
As soon as $t\gsl d-1$, we have $Q\in \Go$, hence
the form $\ti{Y}$ is a Smith form for $M$.
\end{proof}

The definitions of distance and index in the Bruhat-Tits building suggest the following.

\begin{defn}\label{VB:dist}
Let $\E,\F\in H$ be two holomorphic vector bundles. The {\em distance}
between $\E$ and $\F$ and the {\em index} of $\F$ with respect to $\E$
are defined as the integers
$$d(\E,\F)=\max_{x\in X}d(\E_x,\F_x)\mbox{ and }[\E:\F]=\sum_{x\in X}[\E_x:\F_x]$$
where quantities on the right-hand side denote those defined in the local
Bruhat-Tits building at $x$ by relation~(\ref{d-i}).
\end{defn}

%%%%%%%%%%%%%%%%
\section{Birkhoff-Grothendieck Trivialisations}
\label{BG}
%%%%%%%%%%%%%%%%

The central result in the theory of holomorphic vector bundles on $X=\P$ is
the Birkhoff-Grothendieck theorem, which states that any such bundle
is isomorphic to a
direct sum of line bundles. In this section, we investigate what
properties of the vector bundle
can be retrieved by considering only the Bruhat-Tits building at a
point $x\in X$. In what follows, we take $X=\P$.

%%%%%%%%%%%%%%
\subsection{The Birkhoff-Grothendieck Property}
\label{BG:triv}
%%%%%%%%%%%%%

According to section \ref{HVB}, a holomorphic vector bundle $\E\in H$ is
completely described by the coset $\ra=[\E]\in H/R_x$ and the lattice
$\la=\E_x\in\ra$.
Let us take up the notations of section \ref{sdc} again.
Let $V$ denote the meromorphic stalk $\V_x$ and let $\B$ be the corresponding
Bruhat-Tits building. Let $\B_0$ the subset of {\em trivialising
lattices} of $\B$. Strictly speaking, these are the lattices $M\in\ra$ such that the
extension $\E^{M}$ gives a
{\em quasi-trivial} vector bundle, but we will not bother much to make
the difference, since we will get a trivial bundle by simply tensoring
by a line bundle. The space $\Gamma(X,\F)$ of sections of a trivial
bundle $\F=\E^M$ induces, by taking stalks at $x$, a form $Y_M$
in the corresponding lattice $M=\F_x$, that we call the {\em global
form} of $M$.

It follows from the Birkhoff-Grothendieck theorem that the set $\B_0$
is always non-empty. We do not know if this is actually a weaker result.
However, if we admit this possibly weaker result, we can deduce from it
an elementary algebraic proof of the Birkhoff-Grothendieck theorem
that displays
quite nicely the geometric properties of the local Bruhat-Tits building.
First we start by making the link between the Birkhoff-Grothendieck
theorem and
the algebraic structure of the local lattices. Let us say that $\E$ has the
Birkhoff-Grothendieck property if $\E\simeq\bigoplus_{i=1}^n \L_i$
where $\L_i$ are holomorphic line bundles. Then the following characterisation is
straightforward.

\begin{lemm}
\label{BG:del}
A vector bundle $\E\in H$ has the Birkhoff-Grothendieck property
if and only if there exists a trivialising lattice $M\in \B_0$
and a Smith basis $(e)$ of $M$ with respect to $\la=\E_x$
that is simultaneously a $\C$-basis of the global form $Y_M$ of $M$.
\end{lemm}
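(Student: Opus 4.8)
The statement is an ``if and only if'' linking a global property (the bundle $\E$ splits as a sum of line bundles) with the existence of a particularly well-adapted pair (a trivialising lattice $M$ and a Smith basis $(e)$ of $M$ with respect to $\la=\E_x$ that is simultaneously a $\C$-basis of the global form $Y_M$). Since the excerpt already establishes that $\B_0\neq\emptyset$ via Birkhoff-Grothendieck and records the correspondence between trivial bundles $\F=\E^M$ and their global forms $Y_M$, the plan is to translate the splitting of $\E$ into a diagonalisation statement about the pair $(\la,M)$ inside the local Bruhat-Tits building, and conversely.

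\emph{($\Rightarrow$)} Suppose $\E\simeq\bigoplus_{i=1}^n\L_i$ with $\L_i$ line bundles of degrees $k_i$. I would take the trivialising lattice $M$ to be (a suitable representative of) $\E^{\mathrm{mod}}$ obtained by tensoring each $\L_i$ into $\ox_{\P}$, i.e. $M$ is the stalk at $x$ of $\bigoplus_i\L_i(-k_i\cdot\infty)\cong\ox_{\P}^n$ — more precisely one uses that a trivialising lattice in the coset $\ra=[\E]$ exists and, by the splitting, may be chosen diagonal with respect to the line bundle decomposition. The global sections $\Gamma(\P,\E^M)$ then have a basis of sections each supported in one summand $\L_i(-k_i\infty)$; taking stalks at $x$ gives a basis $(e)$ of $M$ which lies in the global form $Y_M$ and which, because the $\L_i$ are pairwise ``orthogonal'' line bundles whose stalks at $x$ scale by powers of the uniformiser $z$, is simultaneously a Smith basis of $M$ for $\la=\E_x$ (the elementary divisors being the $k_i$ up to normalisation). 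Here I would lean on Lemma~\ref{flag:lattice}(ii) and the description of $M_Y=\Psi_Y(F,\k)$ to be sure the chosen basis is genuinely a Smith basis and a $\C$-basis of the form at once.

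\emph{($\Leftarrow$)} Conversely, suppose such $M\in\B_0$ and such a basis $(e)=(e_1,\dots,e_n)$ exist. Since $(e)$ is a $\C$-basis of the global form $Y_M$ of the trivial bundle $\F=\E^M$, each $e_i$ extends to a global section $s_i$ of $\F$ over $\P$, and these $s_i$ form a basis of $\Gamma(\P,\F)$. Because $(e)$ is also a Smith basis of $M$ with respect to $\la=\E_x$, there are integers $k_1\leqslant\cdots\leqslant k_n$ with $(z^{k_1}e_1,\dots,z^{k_n}e_n)$ a basis of $\la=\E_x$. I would then define $\L_i\subset\E$ to be the subsheaf generated outside $x$ by $s_i$ and at $x$ by $z^{k_i}e_i$; Lemma~\ref{coco} guarantees this is a well-defined holomorphic line subbundle of $\E$ (its stalk is prescribed lattice-by-lattice, equal to $\E$'s away from $x$). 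The sum map $\bigoplus_i\L_i\to\E$ is an isomorphism of stalks everywhere — away from $x$ because the $s_i$ span $\V$ and each $\L_i$ agrees with $\E^M$ there, and at $x$ because $(z^{k_i}e_i)$ is exactly a basis of $\E_x$ — hence an isomorphism of sheaves, giving the Birkhoff-Grothendieck property.

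\emph{Main obstacle.} The delicate point is the simultaneous-basis bookkeeping at $x$: one must check that the global sections coming from the form $Y_M$, when restricted to stalks and rescaled by the elementary divisors, really do generate $\E_x$ and not merely a sublattice, and symmetrically that in the forward direction the line-bundle decomposition can be arranged so that the diagonal Smith basis at $x$ is visible inside the \emph{global} form rather than just some local form. This is where the identification of $\B_0$ with the set of forms (the $Y_M$ construction) and the uniqueness in Lemma~\ref{flag:lattice} do the real work; everything else is a routine transcription between sheaf-theoretic and lattice-theoretic language.
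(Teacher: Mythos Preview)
The paper does not actually prove this lemma: the sentence introducing it says ``the following characterisation is straightforward,'' and the paragraph after the statement merely unpacks it (``there is a basis $(\vep)$ of $\E_x$ such that the matrix of the change of basis to $M=\F_x$ is given by the diagonal matrix $T$''). Your argument is correct and is exactly the natural transcription the paper has in mind; in particular your $(\Leftarrow)$ direction is the content of that paragraph read backwards, and your $(\Rightarrow)$ construction is what the paper later formalises as the canonical trivialisation $\L_x(\E)=\bigoplus_i T_x(\L_i)$ in section~\ref{HN:eldiv}.

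One cosmetic remark: you do not need to invoke Lemma~\ref{flag:lattice} or the $\Psi_Y$ machinery for $(\Rightarrow)$. It is more direct to pick a nowhere-vanishing global meromorphic section $\sigma_i$ of each $\L_i$ whose divisor is supported at $x$ (possible since $\deg\L_i=k_i$ on $\P$); then the stalks $(\sigma_{i,x})$ form a basis of a trivialising lattice $M$, lie in its global form by construction, and $(z^{k_i}\sigma_{i,x})$ is a basis of $\E_x$, so $(\sigma_{i,x})$ is visibly a Smith basis. This avoids the slightly vague appeal to ``may be chosen diagonal with respect to the line bundle decomposition.''
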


The previous result can be understood in the following sense: if we
put
$$\E\simeq \bigoplus_{i=1}^n \h(a_i)\mbox{ with }a_1\gsl\cdots \gsl
a_n,$$
then there is a basis $(\vep)$ of $\E_x$ such that the matrix of the
change of basis to $M=\F_x$ is given by the diagonal matrix
$T=\diag(a_1,\ldots,a_n)$ of elementary divisors, where $a_i\gsl
a_{i+1}$. We sum this situation by the diagram 
$\appname{\E_x}{\F_x}{z^T}$, where $z$ is a local
coordinate at $x$. Note that Bolibrukh uses the inverse convention
with types $\h(-c_i)$ and $c_1\lsl\cdots\lsl c_n$.

In this case, we say that $\F$ is a {\em Birkhoff-Grothendieck
trivialisation of $\E$ at $x$}, the basis $(e)$ a
{\em Birkhoff-Grothendieck basis} of $\F$ for $\E$, and the apartment
$[\Phi]$ spanned by $(e)$, a {\em Birkhoff-Grothendieck apartment}
for $\E$. To avoid multiplying definitions, we will say that a basis
$(\vep)$ of $\V_x$ is {\em Birkhoff-Grothendieck} if there is a local
coordinate $z$ at $x$ and a diagonal integer matrix
$T=\diag(a_1,\ldots,a_n)$ such that $z^T(\vep)$ is a basis of a
trivialisation of the coset $\ra\in H/R_x$.

\begin{rem}
When $X=\P$, a line bundle $\L$ is characterised by its degree.
Recall that if the integers $a_i=\deg \L_i$ satisfy $a_1\gsl \cdots
\gsl a_n$, then the sequence $T(\E)=(a_1,\ldots,a_n)$ is unique and 
called the {\em type} of $\E$. The group of monopole gauges is described
by the group of unimodular polynomial matrices $\G(\C[T])$, that is matrices
of the form $$P=P_0+P_1T+\cdots+P_kT^k\mbox{ where }\exists \alpha\in \C -\{0\},
\det P=\alpha\mbox{ for all }T.$$
\end{rem}

We state now the following result separately for further reference.

\begin{prop}\label{del=type}
Assume $X=\P$, and let $\E\in H$ be a holomorphic vector bundle.
The type of the bundle $\E$ is equal to the sequence of elementary
divisors $\k_{\E_x}(\F_x)$ (in reverse order) of the stalk $\E_x$ 
with respect to $\F_x$ (viewed as lattices in $\V_x$), 
for any Birkhoff-Grothendieck trivialisation $\F$ of $\E$ at any $x\in X$.
\end{prop}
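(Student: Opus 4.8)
The plan is to reduce the statement to the definitions of the previous subsections, exploiting that a Birkhoff-Grothendieck trivialisation $\F$ of $\E$ at $x$ is, by construction, a \emph{trivial} (or quasi-trivial) bundle whose stalk $M=\F_x$ sits in the coset $\ra$, together with a distinguished Smith basis $(e)$ of $M$ for $\la=\E_x$ which is simultaneously a $\C$-basis of the global form $Y_M$. First I would recall the setup of Lemma~\ref{BG:del}: writing $\E\simeq\bigoplus_{i=1}^n\h(a_i)$ with $a_1\gsl\cdots\gsl a_n$, the diagram $\appname{\E_x}{\F_x}{z^T}$ with $T=\diag(a_1,\ldots,a_n)$ says precisely that there is a basis $(\vep)$ of $\E_x$ whose images $z^{a_i}\vep_i$ form a basis of $M=\F_x$. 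Since $\F$ is trivial, its sections generate it globally, and the chosen basis of $\Gamma(X,\F)$ restricts at $x$ to the $\C$-basis $(z^{a_i}\vep_i)$ of the global form $Y_M\subset M$ — so the $a_i$ really are the degrees of the line subbundles.

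The core of the argument is then a dimension count identifying the $a_i$ with the elementary divisors $\k_{\E_x}(\F_x)$ read in reverse. By definition of \del\! (the ``Elementary Divisors'' paragraph of section~\ref{sdc}), the sequence $\k_{\la}(M)=(k_1\lsl\cdots\lsl k_n)$ is the unique increasing sequence such that some $\h$-basis $(f)$ of $\la=\E_x$ has $(z^{k_i}f_i)$ a basis of $M=\F_x$. The diagram above exhibits $(\vep)$ as an $\h$-basis of $\E_x$ with $(z^{a_i}\vep_i)$ a basis of $\F_x$ and $a_1\gsl\cdots\gsl a_n$; by the uniqueness part of the \del\! definition (allowing the lax reordering mentioned there), the multiset $\{a_i\}$ coincides with $\{k_i\}$, and since the $a_i$ are \emph{decreasing} while the $k_i$ are \emph{increasing}, we get $a_i=k_{n+1-i,\E_x}(\F_x)$, i.e.\ $T(\E)=\k_{\E_x}(\F_x)$ in reverse order.

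It remains to check that this is independent of the chosen trivialisation $\F$ and the chosen point $x$. Independence of $\F$ is immediate: the type $T(\E)=(a_1,\ldots,a_n)$ with $a_1\gsl\cdots\gsl a_n$ is a well-defined invariant of $\E$ by the Birkhoff-Grothendieck theorem and the uniqueness recalled in the Note following Lemma~\ref{BG:del}, and the displayed equality holds for \emph{every} such $\F$. Independence of $x$ is the same observation from the other side: $\k_{\E_x}(\F_x)$ depends on $x$ and on $\F$, but the proposition asserts it always equals the fixed sequence $T(\E)$, so the point plays no role. The step I expect to be the only real (if minor) subtlety is the bookkeeping between the two opposite orderings — the decreasing convention for bundle types versus the increasing convention for \del\! — together with making sure the ``lax'' reordering clause in the \del\! definition is invoked so that the Birkhoff-Grothendieck basis $(\vep)$, which is naturally indexed by \emph{decreasing} $a_i$, is legitimately a Smith basis; everything else is a direct unwinding of Lemma~\ref{BG:del} and the definition of elementary divisors.
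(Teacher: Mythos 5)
Your proposal is correct and takes essentially the paper's own route: the paper states Proposition~\ref{del=type} ``for further reference'' immediately after the remark following Lemma~\ref{BG:del}, and gives no separate proof — the statement is a direct formalisation of that remark, whose content you unwind faithfully. The only real point, as you identify, is the bookkeeping between the decreasing convention for the type and the increasing one for elementary divisors, which you handle correctly.
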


Let $\E$ have type $T(\E)=(a_1,\ldots,a_n)$. The {\em triviality index} 
\mbox{$i(\E)=\sum_{i=1}^n(a_1-a_i)$} measures how far $\E$ is from being quasi-trivial.
In a more ``intrinsic'' way, we can define it as the sum of the indices of the dual bundles
$i(\E)=[\E^*:\F^*]$ for any Birkhoff-Grothendieck trivialisation $\F$ of $\E$.

The Birkhoff-Grothendieck trivialisations of a bundle $\E$ are as a rule not unique.
\begin{lemm}
\label{BGTriv:all}
Let $\la\in \ra\in H/R_x$ represent a bundle of type $\k=(k_1,\ldots,k_n)$. Then the set of Birkhoff-
Grothendieck bases of $\la$ is the orbit of any Birkhoff-Grothendieck basis $(e)$ under the
{\em $\k$-staged parabolic group} $G_{\k}=\{P\in\Go \tq \deg(P_{ij}) \lsl k_i-k_j \}$.
\end{lemm}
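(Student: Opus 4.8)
The plan is to fix one Birkhoff-Grothendieck basis $(e)$ of $\la$, with associated diagonal matrix $z^{\k}$ sending $(e)$ to a Smith basis of the trivialising lattice $M=\F_x$, and then to characterise exactly which changes of basis $P\in\Go$ again produce a Birkhoff-Grothendieck basis. The claim splits naturally into two inclusions. First I would show that for any $P\in G_{\k}$, the transformed family $(e')=(eP)$ is again a Birkhoff-Grothendieck basis; second, that any Birkhoff-Grothendieck basis $(e')$ is obtained from $(e)$ by such a $P\in G_{\k}$.

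For the first inclusion, set $(e')=(eP)$. Since $P\in\Go$ is a lattice automorphism of $\la$, the family $(e')$ is still an $\h$-basis of $\la$. The key computation is to transport $P$ across the diagonal matrix $z^{\k}$: the change of basis relating $(z^{\k}e')$ to $(z^{\k}e)$ is $\tilde P=z^{-\k}Pz^{\k}=(P_{ij}z^{k_j-k_i})$. The condition $\deg(P_{ij})\lsl k_i-k_j$ says precisely that each entry $P_{ij}$ is a polynomial of degree at most $k_i-k_j$, hence $P_{ij}z^{k_j-k_i}$ has $z$-valuation $\gsl 0$, so $\tilde P\in\gl_n(\h)$; as $P$ is unimodular, $\tilde P\in\Go$. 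Thus $(z^{\k}e')$ is still an $\h$-basis of $M$, and since $M$ is a trivialising lattice with global form $Y_M$ admitting $(z^{\k}e)$ as a $\C$-basis, we need $(z^{\k}e')$ to lie in $Y_M$ as well. Here I would invoke the structure noted in the remark: over $\P$ the monopole gauges are the polynomial unimodular matrices, and the staged-parabolic condition is exactly the condition that the gauge $\tilde P$ preserves the splitting of the trivialisation, so $(z^{\k}e')$ spans the same global form $Y_M$. (This is where Lemma~\ref{BG:del} and Proposition~\ref{del=type} are used: being Birkhoff-Grothendieck is equivalent to being simultaneously a Smith basis for some $M\in\B_0$ and a $\C$-basis of $Y_M$, and the elementary divisors are forced to be $\k$ by Proposition~\ref{del=type}.)

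For the converse, let $(e')$ be any Birkhoff-Grothendieck basis of $\la$, with trivialising lattice $M'\in\B_0$ and diagonal matrix $z^{\k'}$. By Proposition~\ref{del=type} the sequence $\k'$ (in reverse order) is the type $T(\E)$, which is an invariant of $\E$, so $\k'=\k$ up to the harmless reordering; I may therefore assume the same $z^{\k}$ for both. Writing $P\in\Go$ for the change of basis from $(e)$ to $(e')$, both $(z^{\k}e)$ and $(z^{\k}e')=(z^{\k}eP)$ must be $\h$-bases of \emph{trivialising} lattices and $\C$-bases of the respective global forms. The gauge $\tilde P=z^{-\k}Pz^{\k}$ relating these Smith bases lies in $\Go$ (it is a change of $\h$-basis of a lattice after adjusting homothety), which forces $v(P_{ij})\gsl k_i-k_j$ for all $i,j$; combined with the fact that $P$, being a monopole gauge over $\P$, has polynomial entries, we get $\deg(P_{ij})\lsl k_i-k_j$ — wait, the two lattices $M$ and $M'$ could differ, so I also need that $\tilde P$ carries $Y_M$ to $Y_{M'}$, which again uses that monopole gauges over $\P$ are polynomial and that the triviality is preserved; this pins down $P\in G_{\k}$ exactly.

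The main obstacle I expect is the converse direction, specifically controlling simultaneously the two data that define ``Birkhoff-Grothendieck'': that $M'$ is again a \emph{trivialising} lattice and that $(e')$ lies in its global form. The first part (transversality/valuation bound $v(P_{ij})\gsl k_i-k_j$, i.e.\ $P\in\mf{G}_{\k}$) is the pure Bruhat-Tits combinatorics of Smith bases, and is essentially Lemma~\ref{flag:lattice}; the genuinely global input is that over $\P$ the relevant gauges are \emph{polynomial} (the remark on $\G(\C[T])$), which upgrades the valuation bound $v(P_{ij})\gsl k_i-k_j$ to the degree bound $\deg(P_{ij})\lsl k_i-k_j$. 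Making this upgrade precise — that a monopole gauge transform sending one Birkhoff-Grothendieck trivialisation to another is polynomial with the stated degree constraints — is the crux, and I would phrase it by working in the trivialisation $\E\simeq\bigoplus\h(a_i)$ explicitly, where $\Go$-membership of $\tilde P$ and polynomiality of $P$ together are literally the definition of $G_{\k}$.
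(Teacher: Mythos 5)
Your key computation $\tilde P = z^{-\k}Pz^{\k} = (P_{ij}z^{k_j-k_i})$ is the same as the paper's, but you draw the wrong conclusion from the degree bound, and this error propagates through both directions. From $v(P_{ij})\gsl 0$ (i.e.\ $P\in\Go$) and $\deg(P_{ij})\lsl k_i-k_j$ you do \emph{not} get $\tilde P\in\Go$: the entry $\tilde P_{ij}=P_{ij}z^{k_j-k_i}$ has valuation $v(P_{ij})+k_j-k_i$, which is negative whenever $k_i>k_j$. What the two bounds actually give is $v(\tilde P_{ij})\gsl k_j-k_i$ together with $\deg(\tilde P_{ij})\lsl 0$, so each $\tilde P_{ij}$ is a polynomial in $z^{-1}$; and since $\det\tilde P=\det P\in\h^{*}$ is simultaneously a polynomial in $z^{-1}$, it is a nonzero constant. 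Thus $\tilde P$ is a \emph{monopole gauge} at $x$ — which is essentially the opposite of $\tilde P\in\Go$, and is the condition that is actually relevant here.

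The confusion shows in how you use the wrong conclusion. You claim $(z^{\k}e')$ ``is still an $\h$-basis of $M$'' and later that it ``spans the same global form $Y_M$'' — but neither is needed nor true: distinct Birkhoff-Grothendieck bases of $\la$ give rise to \emph{distinct} trivialising lattices $M\neq\tM$ (that is the content of the lemma), and $(e')$ being Birkhoff-Grothendieck only means that $(z^{\k}e')$ spans the global form $Y_{\tM}$ of \emph{some} trivialising lattice. The characterisation of when this happens is exactly that the gauge $\tilde P$ carrying $(z^{\k}e)$ to $(z^{\k}e')$ lies in the group of monopole gauges at $x$, since these are by definition the automorphisms of $\Gamma(X,\V)$ preserving the set $\ra_x^0$ of trivialising lattices. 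The same swap recurs in your converse: you justify $\tilde P\in\Go$ by calling it ``a change of $\h$-basis of a lattice'' (false, since $M$ and $M'$ generally differ), and you call $P$ a monopole gauge when it is $P\in\Go$ that is the local basis change of $\la$ and $\tilde P$ that is the monopole. The paper's one-line proof is the corrected version of your diagram: since $v(P_{ij})\gsl 0$, the conjugate $\Pi=z^{-\k}Pz^{\k}$ is a monopole gauge if and only if $\deg(P_{ij})\lsl k_i-k_j$. Your skeleton is right; the roles of $\Go$ and the monopole group are interchanged throughout.
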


\begin{proof}
Consider two Birkhoff-Grothendieck trivialisations $M,\tM$
of $\la$, like in the following diagram:
$$
\xymatrix{
\la \ar[d]_-{P} \ar[r]^-{z^{\k}} & Y_M \ar[d]^-{\Pi} \\
\la \ar[r]^-{z^{\k}} & Y_{\tM}
}
$$
Since $v(P_{ij})\gsl 0$, the gauge $\Pi=z^{-\k}Pz^{\k}$ is a monopole gauge 
if and only if $\deg(P_{ij}) \lsl k_i-k_j$. 
\end{proof}

Since any $\k$-staged parabolic group in dimension 1 is equal to $\C^*$, a {\em line bundle} $\L$ 
does however admit a unique trivialisation $T_x(\L)$ at $x$.

\begin{rem}
\label{K-para}
If the type is ordered by decreasing values, then the matrix $P$ is in block-upper-triangular
form with respect to the blocks of equal elements of $\k$ (that is,
{\em $\k$-parabolic}), and so is $\Pi$.
\end{rem}

%%%%%%%%%%%%%
\subsubsection{Transporting a Birkhoff-Grothendieck Trivialisation}
\label{trans:BG}
%%%%%%%%%%%%%%

We explain here what we understand by transporting the trivialisation point 
from $x$ to $y\in X$. According to the previous section, a Birkhoff-Grothendieck
trivialisation of a bundle $\E$ at $x$ is a trivial bundle $\F$ such that
$(\E,\F)\in R_x$, and such that there exists a basis $(\sg)=(\sg_1,\ldots,\sg_n)$ of global sections 
of $\F$ and an integer sequence $\k=(k_1,\ldots,k_n)$, such that 
$(e)=(t^{-k_1}\sg_1,\ldots,t^{-k_n}\sg_n)$ spans the stalk $\E_x$ over the 
local ring $\h=(\ox)_x$, where $t$ is a local coordinate at $x$. This coordinate $t$ can 
be arbitrarily chosen, since the local behaviour of $\E$ only depends on the local ring $\h$.
However, if we choose as coordinate $t$ a meromorphic function on $X$, then the 
sections $(e)$ form a basis of global (meromorphic) sections of $\V$. The $\ox$-module $\ti{\F}$
spanned by $(e)$ in this case does coincide with $\E$ at $x$, and differs from it at most
on the support of the divisor of the function $t$. When $X=\P$, we can obviously find a 
function $t$ with divisor $(t)=x-y$ for any arbitrary point $y\neq x$. In this case,
the bundle $\ti{\F}$ is a Birkhoff-Grothendieck trivialisation of $\E$ at $y$. It is 
clearly independent of the global basis $(\sg)$ of $\F$, which is defined up to a $(-K)$-parabolic
constant matrix $C\in \Gc$, and of the specific meromorphic function $t$, which is only 
defined up to a non-zero constant. We call $t_y(\F)=\ti{\F}$ the {\em transport at $y$} of the
Birkhoff-Grothendieck trivialisation $\F$ of $\E$ at $x$.

Understood otherwise, this is the description of a non-trivial bundle $\E$ by means 
of {\em two trivial} bundles $\F$ and $\ti{\F}$ coinciding outside $\{x,y\}$, and glued
along the cocycle $g=t^{\k}$, where $(t)=x-y$.

%%%%%%%%%%%%%%
\subsubsection{The Harder-Narasimhan Flag}
\label{HN:eldiv}
%%%%%%%%%%%%%

The Harder-Narasimhan filtration $\mathrm{HN}(\E)$ of $\E$ over $\P$ can be
obtained easily (see \cite{Sa} p. 65) from a decomposition
$\E=\bigoplus_{i=1}^n \L_i$ of $\E$, as a direct sum of line bundles $\L_i\simeq \h(a_i)$
of the appropriate degree, by $$F^k(\E)=\bigoplus_{i \tq
a_i \gsl k}\L_i.$$ Note that such a direct sum  $\L=(\L_1,\ldots,\L_n)$ induces at $x$ a
canonical Birkhoff-Grothendieck trivialisation $\L_x(\E)=\bigoplus_{i=1}^n T_x(\L_i)$.
Locally, the Harder-Narasimhan filtration can be defined as follows. Let $(e)$ be a
Birkhoff-Grothendieck basis of $\E_x$. The {\em Harder-Narasimhan flag
$\hn_{\la}$ of $\V_x$} is defined by 
\begin{equation}\label{HN:def}
F^k=\bigoplus_{i \tq a_i \gsl k}K e_i
\end{equation}

\begin{lemm}
\label{localHNF}
Let $\E$ be a holomorphic vector bundle over $X=\P$. For $x\in
X$, let $V=\V_x$ and $\la=\E_x$. 
Then the Harder-Narasimhan flag $\hn_{\la}$ of $V$ defined by relation~(\ref{HN:def})
is independent of the Birkhoff-Grothendieck trivialisation $\F$ 
and basis $(e)$ appearing in the definition. Moreover, let $\pi^{\E}_x$ be the projection
$\app{\E}{E=\E_x/\id_x\E_x}$. Then the following hold:
\begin{enumerate}[i)]
\item The $\h$-flag $\hn_{\la}\cap\la$ coincides with the stalk $\hn(\E)_x$ of the
Harder-Narasimhan filtration of $\E$.\label{localHNFi}
\item For any Birkhoff-Grothendieck trivialisation $\F$ of $\E$ at $x$,
letting $M=\F_x$, the flag $\pi^{\F}_x(\hn_{\la}\cap M)$ coincides with 
the relative flag $F_{M}(\la)$ defined in section~\ref{RelFlag}\label{localHNFii}.
\item Conversely, for any flag $F'$ which is transversal to the flag $\pi_x^{\E}(\hn(\E)_x)$ 
in $E=\la/\id_x\la$, there exists a Birkhoff-Grothendieck trivialisation $\F'$ of $\E$ at $x$,
such that the relative flag $F_{\la}(\F'_x)$ in $E$ coincides with $F'$\label{localHNFiii}.
\end{enumerate}
\end{lemm}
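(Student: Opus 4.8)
The plan is to fix a Birkhoff--Grothendieck trivialisation $\F$ of $\E$ at $x$ together with a Birkhoff--Grothendieck basis $(e)$ of $\E_x=\la$, so that $(z^{-a_1}e_1,\ldots,z^{-a_n}e_n)$ is a basis of global sections of $\F$ and $F^k=\bigoplus_{a_i\gsl k}Ke_i$. The first task is independence: if $(e')$ is another Birkhoff--Grothendieck basis (possibly from another trivialisation $\F'$), then by Lemma~\ref{BGTriv:all} the change of basis lies in the $\k$-staged parabolic group $G_{\k}$ with $\k=(a_1,\ldots,a_n)$, i.e. $\deg(P_{ij})\lsl a_i-a_j$. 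In particular $P_{ij}=0$ whenever $a_i>a_j$, so $P$ is block-upper-triangular with respect to the blocks of equal $a_i$ (remark~\ref{K-para}). Such a $P$ preserves each $F^k=\bigoplus_{a_i\gsl k}Ke_i$, hence the flag $\hn_{\la}$ does not depend on the chosen trivialisation or basis. This handles the preamble and, I expect, is the cleanest part.

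For part~\ref{localHNFi}, I would compute $\hn_{\la}\cap\la$ directly in the basis $(e)$: since the $e_i$ form an $\h$-basis of $\la$ and $F^k$ is spanned by a subset of them, $F^k\cap\la=\bigoplus_{a_i\gsl k}\h e_i$. On the other hand, the canonical Birkhoff--Grothendieck trivialisation $\L_x(\E)$ attached to a line-bundle decomposition $\E=\bigoplus\L_i$ realises $\hn(\E)$ stalkwise as $F^k(\E)_x=\bigoplus_{a_i\gsl k}(\L_i)_x=\bigoplus_{a_i\gsl k}\h e_i$, using the recalled formula $F^k(\E)=\bigoplus_{a_i\gsl k}\L_i$. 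Comparing the two gives the claimed equality of $\h$-flags; the only thing to check is that a decomposition-induced basis is indeed a Birkhoff--Grothendieck basis in the sense of section~\ref{BG:triv}, which is exactly the content of the parenthetical remark that $\L_x(\E)$ is a canonical Birkhoff--Grothendieck trivialisation.

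For part~\ref{localHNFii}, let $M=\F_x$ and recall $(z^{-a_1}e_1,\ldots,z^{-a_n}e_n)$ is a basis of $M$; equivalently, relabelling, $(e)$ is a Smith basis of $M$ for $\la$ with elementary divisors $(a_i)$ in $M$ (so $z^{a_i}(z^{-a_i}e_i)=e_i$ spans $\la$). By the construction in section~\ref{RelFlag}, the relative flag $F_M(\la)$ in $\ov M=M/\id M$ is the flag whose $j$-th step is the image of $\bigoplus_{a_i\gsl \kk_j}\h(z^{-a_i}e_i)$, i.e. precisely $\pi^{\F}_x(\hn_{\la}\cap M)$ since $\hn_{\la}\cap M=\bigoplus_{a_i\gsl k}\h(z^{-a_i}e_i)$ by the same computation as above. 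So this is a matter of matching two descriptions of the same filtration by monomials in a Smith basis; the indexing (by value of elementary divisor versus by position) is the only place to be careful.

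The genuinely substantive part is~\ref{localHNFiii}: given a flag $F'$ in $E=\la/\id_x\la$ transversal to $\pi_x^{\E}(\hn(\E)_x)$, produce a Birkhoff--Grothendieck trivialisation $\F'$ with $F_{\la}(\F'_x)=F'$. The plan is to lift $F'$ to a form $Y$ in $\la$ compatible with $F'$ (a $\C$-linear section of $\pi_x^{\E}$ whose induced flag is $F'$), and then define $\F'_x=M'=\bigoplus_{i=1}^n z^{-a_i}e'_i$ for a basis $(e')$ of $\la$ adapted to $F'$, reusing the type $\k=(a_i)$; by Lemma~\ref{flag:lattice} this $M'$ is the unique lattice in the $Y$-fiber of $F'$ with the prescribed elementary divisors, and $F_{\la}(M')=F'$ by construction. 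What remains — and this is the main obstacle — is to show $\E^{M'}$ is (quasi-)trivial, i.e. that $M'$ actually lies in $\B_0$. Here transversality is essential: the relative flag $F_{\la}(\F_x)=F_M(\la)$ of the original trivialisation is, by Lemma~\ref{geo:flag} applied along the geodesic from $[\la]$ to $[M]$ (or by a direct check), transversal to $\pi^{\E}_x(\hn(\E)_x)=\pi^{\E}_x(\hn_\la\cap\la)$; so $F'$ and $F_{\la}(M)$ are both transversal to the same flag, hence one passes from $M$ to $M'$ by a monopole gauge. Concretely, I would exhibit $P\in G_X$ (a monopole gauge at $x$, i.e. an element of $\G(\C[t])$ when $X=\P$) carrying the global form $Y_M$ of $M$ to $Y$: since $F'$ is transversal to $\hn$, there is a constant unipotent-type matrix, and conjugating by $z^{\k}$ as in the proof of Lemma~\ref{BGTriv:all} keeps it polynomial of the right degree, hence monopole. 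Then $\F'=(\F)^{M'}$ obtained by this monopole gauge from $\F$ is again trivial, and is the desired Birkhoff--Grothendieck trivialisation. The delicate point to get right is precisely that transversality of $F'$ to the Harder--Narasimhan flag — rather than to $F_{\la}(M)$ — is exactly what guarantees the gauge stays in $\G(\C[t])$ and does not merely stay in $\Go$; I would phrase this via the block structure of remark~\ref{K-para} and the degree bound $\deg P_{ij}\lsl a_i-a_j$ already used above.
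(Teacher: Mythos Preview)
Your argument is correct and the core mechanism is the same as the paper's: independence via the $\k$-staged parabolic structure (Lemma~\ref{BGTriv:all} and remark~\ref{K-para}), parts~\ref{localHNFi}--\ref{localHNFii} by direct inspection in a Smith basis, and part~\ref{localHNFiii} by producing a constant block-triangular change of basis whose $z^{\k}$-conjugate is a monopole. The paper's treatment of~\ref{localHNFiii} is somewhat more direct than yours: it first checks in the fixed Birkhoff--Grothendieck basis $(e)$ that $F_{\la}(M)$ is the opposite flag to $\pi_x^{\E}(\hn(\E)_x)$ in $E$, and then simply observes that for every constant $T$-parabolic $P\in\Gc$ the conjugate $z^{-T}Pz^{T}$ is a monopole, while the orbit of the opposite flag under the $T$-parabolic subgroup is exactly the set of flags transversal to the Harder--Narasimhan flag. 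Your detour through forms and Lemma~\ref{flag:lattice} is not needed, and the invocation of Lemma~\ref{geo:flag} is not quite apt (that lemma compares flags in $\ov{L_k}$, not in $\ov{L}$); the direct check you mention parenthetically is exactly what the paper does.

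Two small slips to clean up: in the independence step the vanishing condition should read $P_{ij}=0$ when $a_i<a_j$ (not $a_i>a_j$), which is what forces block-upper-triangularity for decreasing type; and your equation ``$F_{\la}(\F_x)=F_M(\la)$'' identifies flags living in different quotients ($\la/\id\la$ versus $M/\id M$) --- for~\ref{localHNFiii} you only work in $E=\la/\id\la$, so keep to $F_{\la}(M)$ there.
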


\begin{proof}
The two first assertions are straightforward enough. Let us prove the
third one. Let $T=\diag(a_1I_{n_1},\ldots,a_sI_{n_s})$ with
$a_i>a_{i+1}$ be the matrix of elementary divisors corresponding to
the transformation $\appname{\la=\E_x}{M=\F_x}{z^T}$ in
the basis $(e)$, and let $(\vep)$ be the basis $z^T(e)$. 
Let for simplicity of notation $\nu_i=\sum_{1\lsl k \lsl i} n_i$.
The $(n-i+1)$-th component $F_i$ of the
flag $F_{\la}(M)$ induced by $M$ in $E=\la/\id_x\la$ is spanned by 
$(\ov{e}_{\nu_i+1},\ldots,\ov{e}_n)$, where $\ov{e}_k$ is the 
image of the basis vector $e_k$ in $E$, whereas, according to what has
just been established, the image of the Harder-Narasimhan flag has its
$i$-th component spanned by $(\ov{e}_1,\cdots,\ov{e}_{\nu_i})$, hence both
flags are transversal to each other. Any other 
Birkhoff-Grothendieck trivialisation $\tM$ is obtained from $(\vep)$ by a 
monopole gauge transform $\Pi$ such that $P=z^T\Pi z^{-T}\in \Go$. According to
Note~\ref{K-para}, $\Pi$ is block-upper-triangular with respect to the blocks of 
equal elements of $T$, hence so is $P$. For any such $P\in \Gc$,
the matrix $z^{-T}Pz^T$ is a monopole. The orbit of $(\vep)$ under the
set of the constant $T$-parabolic matrices covers the set of all flags in $E$
which are transversal to the image of $\hn(\E)_x$ in $E$.
\end{proof}

For any Birkhoff-Grothendieck trivialisation $\F$ of $\E$ at $x$,
let $Y=\Gamma(X,\F)$ be the $\C$-vector space of global sections of $\F$. 
The Harder-Narasimhan filtration $\hn(\E)$ also induces a canonical
filtration $\hn_{\la}(Y)$ of $\C$-vector spaces of $Y$. To avoid defining
new concepts, we will also refer to this filtration as the 
{\em Harder-Narasimhan filtration of $Y$}. Note that it is depends completely
on the lattice $\la\in \ra$.

%%%%%%%%%%%%
\subsection{Modification of the type}
\label{type:modif}
%%%%%%%%%%%%%

We wish to answer algebraically the following question: ``What does
the type of $\E$ become when the stalk $\E_x=\la$ at $x$ is replaced
by another
lattice $\ti{\la}$?'' It turns out that the question can be very
explicitly answered when
the lattice $\ti{\la}$ is not too far from $\la$, namely at distance 1 in the
graph-theoretic distance of the Bruhat-Tits building. The following
proposition generalizes a result of Gabber and Sabbah.

\begin{prop}
\label{GS}
Let $\E\simeq\bigoplus_{i=1}^n \h(a_i)$ be a holomorphic vector bundle
on $X=\P$,
with $a_1\gsl\cdots\gsl a_n$, and let $x\in X$. Let $\ti{\la}\in
\ra_x$ be a lattice such that
$\id_x\E_x\subset\ti{\la}\subset \E_x$. Let $E=\E_x/\id_x\E_x$ be the
local fiber at $x$, let
$F:F_0=0\subset F_1 \subset \cdots \subset F_s=E$ be the flag induced
in $E$ by the
Harder-Narasimhan filtration of $\E$, and $W=\ti{\la}/\id_x\E_x$ be
the image of $\ti{\la}$.
Assume that the type of $\E$ is written as
$$a=(\underbrace{a_1,\ldots,a_1}_{n_1\textrm{ times}},
\underbrace{a_2,\ldots,a_2}_{n_2\textrm{ times}},\ldots,
\underbrace{a_s,\ldots,a_s}_{n_s\textrm{ times}}).$$
Then the modified bundle $\F=\E^{\ti{\la}}$ has type
$$\ti{a}=(\underbrace{a_1,\ldots,a_1}_{m_1\textrm{ times}},
\underbrace{a_1-1,\ldots,a_1-1}_{n_1-m_1\textrm{ times}},\ldots,
\underbrace{a_s,\ldots,a_s}_{m_s\textrm{ times}},
\underbrace{a_s-1,\ldots,a_s-1}_{n_s-m_s\textrm{ times}})$$ where
$m_i=\dim_{\C} F_i \cap W-\dim_{\C} F_{i-1} \cap W$.
\end{prop}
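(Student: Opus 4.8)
The key is to pass from line bundles to lattices and reduce everything to a distance-1 computation inside the local building $\B_n$ at $x$, then re-read the answer via Proposition~\ref{del=type}. Concretely, I would fix a Birkhoff--Grothendieck trivialisation $\F_0$ of $\E$ at $x$, giving lattices $\la=\E_x$ and $M_0=(\F_0)_x$ in $V=\V_x$ with diagonal transition $z^{T}$, $T=\diag(a_1I_{n_1},\ldots,a_sI_{n_s})$, in a Birkhoff--Grothendieck basis $(e)$ of $\la$. Since $\id_x\la\subset\ti\la\subset\la$, the lattice $\ti\la$ corresponds under $\Psi_{[\la]}$ to the subspace $W=\ti\la/\id_x\la$ of $E=\ov L$, and the modified bundle is $\F=\E^{\ti\la}$. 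The type of $\F$ is, by Proposition~\ref{del=type}, the reverse of $\k_{\ti\la}(N)$ for any Birkhoff--Grothendieck trivialisation $N$ of $\F$ at $x$; so the whole problem is to identify such an $N$ and compute its elementary divisors with respect to $\ti\la$.

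**Key steps.**
First I would choose $N$ wisely: take $N=M_0$ itself, i.e. keep the \emph{same} trivial bundle $\F_0$, which is legitimate because $\F_0$ and $\E$ agree off $x$, hence so do $\F_0$ and $\F=\E^{\ti\la}$, i.e. $(\F_0,\F)\in R_x$; and $\F_0$ is trivial. Thus $\F_0$ is a Birkhoff--Grothendieck trivialisation of $\F$ at $x$ (up to the quasi-trivial/trivial nuance the authors already flagged), and $T(\F)=\k_{\ti\la}(M_0)$ in reverse order. So everything reduces to computing $\k_{\ti\la}(M_0)$ from $\k_{\la}(M_0)=(a_i)$ and from $W$. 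Second, I would compute these elementary divisors apartment by apartment: choose a basis $(\ov e_1,\ldots,\ov e_n)$ of $E$ adapted simultaneously to the Harder--Narasimhan flag $F$ (which by Lemma~\ref{localHNFii}/\ref{HN:def} is the flag spanned by trailing/leading blocks of $(\ov e_i)$) and to $W$ in the sense of the standard "adapted basis" for a subspace relative to a flag; the numbers $m_i=\dim(F_i\cap W)-\dim(F_{i-1}\cap W)$ are exactly the number of basis vectors of $W$ one can pick inside the $i$-th HN-graded piece. Lifting such a basis to a Birkhoff--Grothendieck basis $(e_i)$ of $\la$ via Lemma~\ref{BGTriv:all} (the $\k$-staged parabolic group acts transitively on such bases, and the adapted basis lies in the right orbit since the required change is $T$-parabolic), I get $\ti\la=\bigoplus_i \h u_i$ with $u_i=e_i$ if $\ov e_i\in W$ and $u_i=z e_i$ otherwise. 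Then in the basis $(e_i)$ of $\la$, the lattice $M_0=\bigoplus z^{a_i}e_i$, so the elementary divisors of $M_0$ in $\ti\la$ read off diagonally: for an index $i$ lying in the $j$-th HN-block, the exponent is $a_j$ if $\ov e_i\in W$ (there are $m_j$ such indices) and $a_j-1$ if $\ov e_i\notin W$ (there are $n_j-m_j$ such). Reversing the order and re-sorting gives exactly the claimed $\ti a$.

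**Main obstacle.**
The routine parts are the two reductions (replacing $\F$ by $\F_0$, and reading type off a diagonal transition). The genuinely delicate point is step two: verifying that one can pick the Birkhoff--Grothendieck basis $(e)$ so that $\ti\la$ is simultaneously diagonal in it, i.e. that the subspace $W\subset E$ admits a basis compatible with the HN-flag that lifts through a $T$-parabolic (hence monopole-preserving, by Lemma~\ref{BGTriv:all} and Note~\ref{K-para}) gauge. This is where the hypothesis $\id_x\la\subset\ti\la\subset\la$ (distance $\le 1$) is essential: it forces $\ti\la$ to be determined by a single subspace $W=\ti\la/\id_x\la$ rather than a full admissible pair, so no interaction between different elementary-divisor levels can occur, and the block-upper-triangular structure of the available gauges suffices. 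I would isolate this as the core linear-algebra lemma: given a flag $F$ and a subspace $W$ of a finite-dimensional space $E$, there is a basis of $E$ adapted to $F$ whose intersection with $W$ is a basis of $W$, and the dimensions of $W$ meeting successive graded pieces are the $m_i$; the rest is bookkeeping. Finally I would remark that the Gabber--Sabbah case is recovered when $s=1$ (semistable $\E$), where $m_1=\dim W$ and the formula just says the type drops by $1$ in $n-\dim W$ coordinates.
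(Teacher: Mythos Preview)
There is a genuine gap in your argument, and it sits precisely at the step you flagged as ``the main obstacle'' but then dismissed too quickly.

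You assert that the original trivial bundle $\F_0$ (with stalk $M_0$) is itself a Birkhoff--Grothendieck trivialisation of $\F=\E^{\ti\la}$. This is false in general. What you have actually shown is only that $(\F_0,\F)\in R_x$ and that $\F_0$ is trivial, i.e.\ that $M_0$ is \emph{some} trivialising lattice for $\ti\la$; Proposition~\ref{del=type} requires a Birkhoff--Grothendieck one. Concretely, take $n=2$, $a_1>a_2$, $\la=\h e_1\oplus\h e_2$, $M_0=\h t^{a_1}e_1\oplus\h t^{a_2}e_2$, and $W=\langle\ov e_1+\ov e_2\rangle$. Then $\k_{\ti\la}(M_0)=(a_2-1,a_1)$, whereas the type of $\E^{\ti\la}$ asserted by the proposition is $(a_1-1,a_2)$. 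So $M_0$ is not a BG trivialisation of $\ti\la$, and reading elementary divisors off $M_0$ gives the wrong answer.

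The source of the confusion is the sentence ``the lattice $M_0=\bigoplus z^{a_i}e_i$'' after you have \emph{already replaced} $(e)$ by the adapted basis $(\varepsilon)=P_0(e)$. Lemma~\ref{BGTriv:all} does say that $(\varepsilon)$ is again a Birkhoff--Grothendieck basis of $\la$, but the associated trivialising lattice is $\ti M=\bigoplus t^{a_i}\h\varepsilon_i$, which is \emph{different} from $M_0$ as soon as $P_0$ is not block-diagonal (and adapting a generic $W$ to the HN flag genuinely requires off-diagonal blocks). Your diagonal computation is therefore computing $\k_{\ti\la}(\ti M)$, not $\k_{\ti\la}(M_0)$. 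The numbers you obtain are correct, but only because $\ti M$, not $M_0$, is the right trivialisation.

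What is missing is exactly the step the paper supplies: one must show that $\ti M$ is again a trivialising lattice. This is done by checking that the gauge from $Y_{M_0}$ to $t^K(\varepsilon)$, namely $Q=t^{-K}P_0\,t^K$, is a monopole: since $P_0$ is $K$-parabolic (block-upper-triangular for the decreasing $a_i$), each nonzero entry $Q_{ij}=(P_0)_{ij}t^{a_j-a_i}$ has $a_j\le a_i$, so $Q\in\G(\C[t^{-1}])$ with $\det Q=\det P_0\in\C^*$. Once this is in place, $\ti M$ is a Birkhoff--Grothendieck trivialisation of both $\la$ and $\ti\la$, and your elementary-divisor count yields the claimed type. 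Add this monopole verification and your argument coincides with the paper's.
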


\begin{proof}
This is explained in the following scheme. Let $\la=\E_x$, and let $t$
be a local
coordinate at $x$. Let $K=\diag(a_1,\ldots,a_n)$ be the elementary divisors
of the Birkhoff-Grothendieck trivialisation $M$ in $\la$ (or, in this case, 
the type of $\E$).
$$
\xymatrix{
Y_M \ar[r]^-{t^{-K}P_0t^{K}}& Y_{\tilde{M}} \\
\la:(e) \ar[u]^-{t^{K}} \ar[r]^-{P_0} \ar[d]^-{\pi} & (\vep)
\ar[u]^{t^{K}} \ar[r]^-{t^{T}} & (\tilde{e}):\tilde{\la}\ar[ul]_-{t^{K-T}}\\
E=\la/t\la:(\ov{e}) \ar[r]^-{P_0} & (u) : W \ar[ur]_-{\Phi_{\la}^{-1}}
}
$$

Let $(e)$ be a basis of $\la$, such that $(\sg)=(t^{K}e)$ is a
basis of the form $Y_M$. Under the canonical projection
$\appto{\pi}{\la}{E=\la/t\la}$, the $\mathrm{HN}$ filtration of $\la$
descends to a flag of $\C$-vector spaces $F:0=F_0\subset \cdots
\subset F_s=E$, and the quotient basis $(\ov{e})$ is a basis
respecting this flag. Let $t\la\subset \ti{\la} \subset \la$ be the
new lattice, and let $W\subset E$ be the subspace it is projected upon
by $\pi$. Let $(u)$ be a basis respecting both $W$ and the flag $F$, and
let $P_0$ be a change of basis from $(e)$ to $(u)$.
Consequently, the matrix $P_0$ belongs to the parabolic subgroup $P_F$
stabilising the flag $F$, therefore
it is block-upper-triangular, with blocks given by the equal elements
among the $a_i$. Define now
the basis $(\vep)$ of $\la$ as the image of $(e)$ under the constant
gauge $P_0$. Here is where $d(\la,\ti{\la})\lsl 1$ is important: the
basis $(\vep)$ is a Smith basis of $\ti{\la}$ (this would be not
necessarily true if the lattices were further apart). Let
$T=\diag(t_1,\ldots,t_n)$ be the diagonal matrix such that $t_i=0$ if
$\pi(\vep_i)\in W$ and $t_i=1$ otherwise. Then $(\ti{e})=t^T(\vep)$
is a basis of $\ti{\la}$. Let now $(\ti{\vep})=t^{K}(\vep)$ be the
basis of $\ti{M}$ deduced from $(\vep)$. The matrix of the basis
change from $\ti{\la}$ to $\ti{M}$ corresponding to the bases $(\sg)$
and $(\ti{\vep})$ is equal to $Q=t^{-K}P_0t^{K}=(P_0)_{ij}t^{k_j-k_i}$.
Now, since $P_0\in P_F$, we have $(P_0)_{ij}=0$ whenever $k_i-k_j<0$.
Therefore this gauge $Q=\frac{1}{t^k}Q_k+\cdots+Q_0$ is a Laurent
polynomial in $t$ with only non-positive terms, where moreover $Q_0\in
\Gc$. Since $X=\P$, it is possible to choose as local coordinate
at $\infty$ a meromorphic function with divisor $(\infty)-(0)$,
namely $t=1/z$. Accordingly, $Q$ is a polynomial in $z$,
whereas $\det Q=\det P_0\in \C^*$. Hence $Q\in \G(\C[z])$ is a
monopole gauge. Since $(\sg)$ was a basis of global meromorphic sections of
$E$, then $(\ti{\vep})$ also is. Therefore $\ti{M}\in B_0$ is a
trivialising lattice.
Moreover, $\ti{M}$ is a Birkhoff-Grothendieck trivialisation of both $\E$
and $\F=\E^{\ti{\la}}$, because the basis $(\ti{\vep})$ is a Smith basis for
$\la$ and $\ti{\la}$. Therefore, we can explicitly compute the new
elementary divisors
of $\ti{\la}$ in $\ti{M}$, which are given by the matrix $K-T$.
Summing up, we see that the change of lattice has subtracted 1 to all
the elementary
divisors corresponding to the vectors of the basis $(\vep)$ whose
image under $\pi$ do not fall into the subspace $W$. We obtain the Harder-
Narasimhan filtration of the modified bundle by reordering the type by
decreasing values.
\end{proof}
This generalises the construction given by Sabbah based on an
idea of O. Gabber in \cite{Sa}, prop. 4.11 (where only the case where $W$
is 1-dimensional is tackled). Based on this result, the Birkhoff-Grothendieck
theorem would get an immediate proof.

\begin{cor}[Birkhoff-Grothendieck theorem]
Any vector bundle $\E\in H$ over $\P$ has the Birkhoff-Grothendieck property.
\end{cor}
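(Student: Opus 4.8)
The plan is to deduce the theorem from Proposition~\ref{GS} together with the (possibly weaker, but granted) fact that the set $\B_0$ of trivialising lattices is non-empty at any point. Fix a vector bundle $\E\in H$ over $\P$ and a point $x\in X$. By assumption there is a trivialising lattice $M\in\B_0$, i.e. the extension $\E^M$ is (quasi-)trivial; after tensoring by a suitable line bundle we may assume $\E^M\simeq\h^n$ outright. Work inside the local Bruhat–Tits building $\B$ at $x$ with $\la=\E_x$ and $M=(\E^M)_x$. The pair $(\la,M)$ is joined in $\B$ by the geodesic path $\Gamma([\la],[M])=(L_0,\dots,L_d)$ of Proposition~\ref{geo}, with $L_0=[\la]$, $L_d=[M]$ and $d=d([\la],[M])$; each consecutive pair $L_k,L_{k+1}$ is adjacent, so representatives may be chosen with $\id_x N_{k+1}\subset N_k\subset N_{k+1}$ (equivalently $\id_x\la_k\subset\la_{k+1}\subset\la_k$ after normalising).

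The heart of the argument is a descending induction on $d$. If $d=0$ then $\E$ itself is quasi-trivial, hence $\E\simeq\h(c)^n$ for some $c$, which has the Birkhoff–Grothendieck property. If $d\gsl 1$, consider the vertex $L_{d-1}$ adjacent to $L_d=[M]$: pick the representative $\ti\la$ with $\id_x M\subset\ti\la\subset M$, i.e. $\ti\la\in\ra_x$ lies at distance $1$ from the trivial lattice $M$. Now apply Proposition~\ref{GS} with the trivial bundle $\E^M\simeq\h^n$ in the role of ``$\E$'' (its type is constant, so $s=1$ and the Harder–Narasimhan flag is the trivial flag $0\subset E$, and any $W$ is HN-compatible): the modified bundle $(\E^M)^{\ti\la}$ has type $(\underbrace{0,\dots,0}_{m},\underbrace{-1,\dots,-1}_{n-m})$, hence splits as a sum of line bundles, hence has the Birkhoff–Grothendieck property. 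But $(\E^M)^{\ti\la}$ and $\E$ have equal stalks off $x$, and at $x$ their stalks $\ti\la$ and $\la=\E_x$ satisfy $d([\ti\la],[\la])=d-1$ by the geodesic property; so we may repeat. More cleanly, run the induction the other way: starting from the trivial bundle $\E^M$ (BG property, $d$ steps to go), each application of Proposition~\ref{GS} along the reversed geodesic $\Gamma([M],[\la])$ produces, from a bundle with the BG property, a new bundle that is again a direct sum of line bundles — because Proposition~\ref{GS} always returns a bundle whose type is an explicit integer sequence, i.e. one that is visibly $\bigoplus\h(\ti a_i)$. After $d$ steps we reach a bundle with stalk $\la$ at $x$ and agreeing with $\E$ everywhere else, i.e. $\E$ itself, which therefore has the BG property.

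The only genuine subtlety — and the step I expect to need the most care — is checking that Proposition~\ref{GS} is applicable at \emph{each} stage of the induction, i.e. that the intermediate bundles really do have the Birkhoff–Grothendieck property so that their Harder–Narasimhan data is available, and that the lattice $\ti\la$ chosen at each stage lies at distance exactly $1$ from the current stalk. The first point is immediate since Proposition~\ref{GS} outputs an explicit type, hence a literal direct sum of line bundles. The second point is exactly the content of the geodesic of Proposition~\ref{geo}: consecutive vertices $L_k,L_{k+1}$ on $\Gamma$ are adjacent, so $d(L_k,L_{k+1})=1$, and one chooses the representative lattice so that it sits between $\id_x$ times the current stalk and the current stalk, as required by the hypothesis $\id_x\E_x\subset\ti\la\subset\E_x$ of Proposition~\ref{GS}. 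With these two observations the induction closes and every $\E\in H$ over $\P$ has the Birkhoff–Grothendieck property.
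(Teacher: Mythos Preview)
Your proposal is correct and follows essentially the same approach as the paper: start from a trivial bundle $\E^M$ (which obviously has the BG property), use Proposition~\ref{GS} to propagate the BG property along adjacent lattices, and use the geodesic path of Proposition~\ref{geo} to connect $M$ to $\la=\E_x$ through a chain of adjacent lattices. Your write-up is more explicit about the induction and the two verification points (applicability of Proposition~\ref{GS} at each step, and adjacency along the geodesic), but the underlying argument is the same.
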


\begin{proof}
According to proposition~\ref{GS}, if a bundle $\E$ has the
Birkhoff-Grothendieck property,
so does $\E^M$ for any lattice $M\in [\E]_x$ which is adjacent to
$\E_x$. However, according
to lemma~\ref{geo}, two lattices are always connected by a path of
adjacent lattices.
Since a trivial bundle obviously has the Birkhoff-Grothendieck
property, the result is established.
\end{proof}

Note that an arbitrary trivialisation $M$ at $x$ of a vector bundle
$\E$ is not necessarily a Birkhoff-Grothendieck one. Another obvious but
useful remark is that, if $M$ is a Birkhoff-Grothendieck trivialisation of $\la$,
then it also is for any lattice $\la'$ on the geodesic path $\Gamma(\la,M)$.

Proposition~\ref{GS} allows to construct
effectively from an arbitrary trivialisation $M$ a Birkhoff-Grothendieck
one, by following geodesics in the Bruhat-Tits building from
$M$ to $\E_x$. The following result shows how to start the construction.

\begin{cor}
\label{BG:adj}
Let $M\in \ra$ be a trivialising lattice in $\B_n$. Then any adjacent
lattice $\la$ admits $M$ as Birkhoff-Grothendieck trivialisation.
More precisely, let $Y\subset M$ be the global form of $M$. For
any basis $(e)$ respecting $W=\la/\id M$, the $Y$-basis $(e_Y)$ is a Smith basis for
$\la$.
\end{cor}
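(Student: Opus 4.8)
The hypothesis $W=\la/\id M$ presupposes $\id M\subset\la\subset M$, which is the normalised form of adjacency we work with (the reverse orientation being symmetric). The plan is to produce, starting from the global form $Y=Y_M$, an $\h$-basis of $M$ that is simultaneously a Smith basis for $\la$ and a $\C$-basis of $Y$, and then to read off the conclusion from the definition of a Birkhoff--Grothendieck trivialisation (equivalently, from Lemma~\ref{BG:del}). The very first thing I would record is the elementary remark that, since $\id M\subset\la\subset M$, the lattice $\la$ is exactly the preimage $\pi_M^{-1}(W)$ of $W$ under the projection $\pi_M\colon M\to M/\id M$; in particular a vector of $M$ lies in $\la$ if and only if its image in $M/\id M$ lies in $W$.

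Next I would take an $\h$-basis $(e)$ of $M$ whose image in $M/\id M$ respects the flag $0\subset W\subset M/\id M$, and pass to its $Y$-basis $(e_Y)$ by Lemma~\ref{flag:lattice}(ii): this is the unique $\C$-basis of $Y$ having the same image as $(e)$ modulo $\id M$, so it is again an $\h$-basis of $M$ and its reduction still respects $W$. Let $J$ be the set of indices $i$ with $\overline{(e_Y)_i}\in W$; then $\{\overline{(e_Y)_i}\}_{i\in J}$ is a basis of $W$ and the remaining reductions are a complement. I then claim that the family $(f_i)$ given by $f_i=(e_Y)_i$ for $i\in J$ and $f_i=z(e_Y)_i$ for $i\notin J$ is an $\h$-basis of $\la$. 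Each $f_i$ lies in $\la$ (for $i\in J$ by the remark above, for $i\notin J$ because $z(e_Y)_i\in\id M\subset\la$), and the family is free over $K$; the only point needing a word is spanning. Given $v\in\la\subset M$, write $v=\sum_i c_i(e_Y)_i$ with $c_i\in\h$ and reduce modulo $\id M$: since $\bar v\in W$ while the $\overline{(e_Y)_i}$ for $i\notin J$ are linearly independent modulo $W$, one gets $c_i\in\id$ for $i\notin J$; substituting $c_i=z\,c'_i$ there expresses $v$ in the $\h$-span of the $f_i$. Hence $(e_Y)$ is a Smith basis of $M$ for $\la$, the elementary divisors of $\la$ in $M$ being $0$ on $J$ and $1$ off $J$.

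Finally I would assemble the conclusion: $(e_Y)$ is by construction a $\C$-basis of the global form $Y=Y_M$, the lattice $M$ lies in $\B_0$, and we have just shown that (after reordering) $(e_Y)$ is a Smith basis for the pair $(M,\la)$ with elementary divisors in $\{0,1\}$; the corresponding scaled family $z^{\k}(e_Y)$ spans $\la$ and consists of global sections of the trivial bundle $\E^M$. By the definition of a Birkhoff--Grothendieck trivialisation (Lemma~\ref{BG:del}) this exhibits $M$ as a Birkhoff--Grothendieck trivialisation of $\la$ at $x$, with Birkhoff--Grothendieck basis $(e_Y)$; as $\la$ was an arbitrary lattice adjacent to $M$, the corollary follows. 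The only mildly delicate steps are the use of Lemma~\ref{flag:lattice}(ii) to replace $(e)$ by $(e_Y)$ without disturbing the reduction modulo $\id M$, and the discrete-valuation-ring argument for the spanning of $\la$; the rest is bookkeeping with the definitions.
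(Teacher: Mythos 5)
Your proof is correct and follows essentially the paper's route: pass to the $Y$-basis $(e_Y)$ via Lemma~\ref{flag:lattice}, note that its reduction modulo $\id M$ agrees with that of $(e)$ and hence still respects $W$, and then use the distance-one hypothesis to conclude $(e_Y)$ is a Smith basis of $M$ for $\la$. The only difference is one of style: the paper encodes this last step as the matrix computation $\ti{U}=t^{-T}(I+tU)t^{T}\in\Go$, whereas you verify the same thing by an elementary discrete-valuation-ring argument showing the scaled family $(z^{T_i}(e_Y)_i)$ spans $\la$.
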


\begin{proof}
Let $W=\la_M/\id M$ and let $T=\left(\begin{array}{cc}0_r & 0 \\0 &
I_{n-r}\end{array} \right)$ be the elementary divisors of $\la_M$ 
with respect to $M$.
Assume that $(e)$ satisfies the assumptions of
the corollary. Then, according to lemma~\ref{flag:lattice},
the $Y$-basis $(e_Y)$ is obtained by a gauge $P=I+tU\in \Go$.
Putting $U=\left(\begin{array}{cc}U_{11} & U_{12} \\U_{21} &
U_{22}\end{array}
\right)$, we have
$$
\xymatrix{
Y:(e_Y) \ar[r]^-{t^{T}} & \la_M \ar[r]^-{t^{v_M(\la)}I}& \la\\
M:(e)\ar[u]^-{I+tU} \ar[r]^-{t^{T}} & \la_M \ar[u]_-{\tilde{U}\in \Go} 
}
$$
since $\ti{U}=t^{-T}(I+tU)t^T=\left(\begin{array}{cc}I_r+tU_{11} &
t^2U_{12}\\U_{12} &  I_{n-r}+tU_{22}\end{array}
\right)$. The basis $(e_Y)$ is therefore indeed a Smith basis of $M$ for $\la$.
Since it is a basis of the global form of $M$, the result follows, and in particular, the
Harder-Narasimhan filtration of the corresponding bundle is equal to the
$Y$-lifting of the flag $(0\subset W \subset M/\id M)$.
\end{proof}

%%%%%%%%%%%%%%%%%
\subsubsection{An Algorithm to compute a Birkhoff-Grothendieck Trivialisation}
\label{BG:algo}
%%%%%%%%%%%%%%%%

Let $x\in X$, and let $\ra=[\E]_x$ be the $R_x$-equivalence class of $\E$.
Let $\la=\E_x$ and $M=\F_x\in \ra$ where $\F$ is an arbitrary trivialisation of
$\E$ at $x$. In this local setting, we ``see'' the global sections of $\F$ as
the {\em global form} $Y\subset M$. According to lemma~\ref{geo}, 
putting $\la^k=\la_M+\id^kM$ for $k\in \Z$,
the sequence $(\la^0,\ldots,\la^d)$, where $d=d(\la,M)$, forms a chain
of adjacent lattices from $M$ to $\la_M$. By successive
applications of proposition~\ref{GS}, we construct a Birkhoff-Grothendieck trivialisation 
$M_k$ of $\la^{k}$. Let us explain this precisely.

If there existed a Smith basis of $M$ for $\la$ which spans simultaneously $Y_M$, the lattice $M$
would be a Birkhoff-Grothendieck trivialisation of $\la$, and the sequence $\k=-T$ would 
represent up to homothety the type of $\E$. This is generally not the case.

\begin{lemm}\label{geodesic:step}
Let $N\in\ra$ be a lattice admitting $M\in\ra^0$ as Birkhoff-Grothendieck trivialisation. If $v_N(\la)=0$, then
there exists a trivial lattice $\tM$ which is a Birkhoff-Grothendieck
trivialisation of both $N$ and $\la+\id N$.
\end{lemm}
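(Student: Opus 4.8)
The plan is to reduce Lemma~\ref{geodesic:step} to a single application of the generalised Gabber--Sabbah result, proposition~\ref{GS}, applied at the point $y$ where $M$ and $N$ are not yet a Birkhoff-Grothendieck pair, i.e. at $x$ in the local picture. First I would set up notation: since $M$ is a Birkhoff-Grothendieck trivialisation of $N$, by lemma~\ref{BGTriv:all} and proposition~\ref{del=type} there is a Smith basis $(e)$ of $M$ for $N$ that simultaneously spans the global form $Y_M$; write $\k=\k_M(N)$ for the elementary divisors, so that $z^{\k}(e)$ is a basis of $N$ and of the global form $Y_N$. The hypothesis $v_N(\la)=0$ says that $\la$ is an $N$-normalised representative of its class, and the lattice in question, $\la+\id N$, is adjacent to $N$ (it sits in $[\id N, N]$). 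So we are exactly in the situation of proposition~\ref{GS}: we want to modify the stalk of the bundle whose trivialisation is $M$ at the point $x$, replacing $N$ by the adjacent lattice $\la+\id N$.

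The key step is to apply proposition~\ref{GS} with the roles: the ambient bundle is the one with stalk $N$ and Birkhoff-Grothendieck trivialisation $M$; its type is $-\k$ (reversed), and the Harder-Narasimhan flag in $E=N/\id N$ is the relative flag $F_N(M)$ by lemma~\ref{localHNF}~\ref{localHNFii}. The subspace $W\subset E$ is the image of $\la+\id N$, namely $W=(\la\cap N+\id N)/\id N = \Psi(\la)$ in the notation of the relative-flag section. Proposition~\ref{GS} then produces a lattice $\tM$ which is simultaneously a Birkhoff-Grothendieck trivialisation of the old bundle (stalk $N$) and of the modified bundle (stalk $\la+\id N$); moreover $\tM$ is trivial, since the proof of proposition~\ref{GS} exhibits $\tM$ as a trivialising lattice by choosing the meromorphic coordinate $t=1/z$ with divisor $(\infty)-(0)$ and checking the connecting gauge $Q=t^{-\k}P_0 t^{\k}$ is a monopole. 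Thus $\tM\in\ra^0$ works for $\la+\id N$.

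It remains to check that the same $\tM$ is a Birkhoff-Grothendieck trivialisation of $\la$ itself, not merely of $\la+\id N$. For this I would use the remark following the Birkhoff-Grothendieck corollary: if $\tM$ is a Birkhoff-Grothendieck trivialisation of a lattice, it is one for every lattice on the geodesic path joining them. The lattice $\la$ lies, after normalisation, on the geodesic from $\tM$ to $\la$, and the point $\la+\id N=\la^{d-1}$ (with $d=d(\tM,\la)$, using $v_{\tM}(\la)$ suitably normalised via $v_N(\la)=0$) is the next-to-last vertex of the geodesic path $\Gamma(\tM,\la)$ by the construction in proposition~\ref{geo}; since $\la+\id N$ is on $\Gamma(\tM,\la)$ and $\tM$ trivialises it, $\tM$ also trivialises $\la$. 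Concretely one checks that a Smith basis of $\tM$ for $\la+\id N$ extends to a Smith basis of $\tM$ for $\la$ because the elementary divisors of $\la$ in $\tM$ differ from those of $\la+\id N$ in $\tM$ only in the last slot (one is increased by $1$), exactly as in the elementary-splitting diagram after proposition~\ref{geo}. The main obstacle I anticipate is the bookkeeping of normalisations — reconciling $v_N(\la)=0$ with the normalisations implicit in proposition~\ref{GS} (which assumes $\id\E_x\subset\ti\la\subset\E_x$) and in the geodesic construction — but this is a matter of multiplying by the correct power of $z$ and carries no real content.
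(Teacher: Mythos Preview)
Your first two paragraphs already prove the lemma: once you observe that $\la+\id N$ is adjacent to $N$ and invoke proposition~\ref{GS}, you obtain a common Birkhoff--Grothendieck trivialisation $\tM$ of both $N$ and $\la+\id N$, and that is exactly what the lemma claims. The paper's own proof is essentially the same reduction to proposition~\ref{GS}, but done by hand with extra bookkeeping: it decomposes the constant term $U_0$ of the gauge from a BG basis of $N$ to a Smith basis for $\la$ via the Bruhat decomposition $U_0=QP_wQ'^{-1}$, and tracks the permutation $w$ explicitly. This extra information is what feeds into proposition~\ref{type:twisted_del} and the abacus picture; your shortcut via proposition~\ref{GS} is perfectly valid for the lemma as stated but does not extract $w$.

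Your third paragraph, however, is both unnecessary and incorrect. The lemma does \emph{not} ask that $\tM$ be a Birkhoff--Grothendieck trivialisation of $\la$; it asks only for $N$ and $\la+\id N$. Indeed the whole point of the surrounding algorithm (section~\ref{BG:algo}) is that one must iterate this lemma $d$ times along the geodesic from $M$ to $\la$, producing a \emph{new} trivialisation at each step; the single $\tM$ produced here is generally not a BG trivialisation of $\la$. Your geodesic argument is also wrong: $\la+\id N$ is the first vertex after $N$ on the geodesic $\Gamma(N,\la)$, not a vertex on $\Gamma(\tM,\la)$, so the remark after the Birkhoff--Grothendieck corollary does not apply. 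Simply delete the third paragraph.
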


\begin{proof}
By assumption, there exists a Birkhoff-Grothendieck basis $(e)$ of $M$ for $N$. Let $(y)$ be the
corresponding basis of $N$, and let $(\vep)$ be a Smith basis of $N$ for $\la$. We also assume that the 
elementary divisors $T$ of $N$ in $M$ and $T'$ of $\la$ in $N$ are ordered by increasing values.
The gauge $U$ from $(y)$ to $(\vep)$ can be factored as
\begin{equation}\label{tiu}
U=U_0(I+tU')\mbox{ with }U_0\in \Gc.
\end{equation}

According to lemma~\ref{localHNF}, the Harder-Narasimhan filtration of $N$ induces in $E=N/\id N$
a flag $F$ spanned by the basis $(\ov{y})$, whereas the flag $F'$ induced by $\la_M$ in $E$
is spanned by $(\ov{\vep})$. Let $B$ be the standard Borel subgroup of $\Gc$.
By the Bruhat decomposition, the group $\Gc$ is a disjoint union of double cosets $$\Gc=\coprod_{w\in W}BwB$$
where $W$ is the Weyl group $W=S_n$. The constant term $U_0$ of the gauge $U$ belongs to only one such cell: 
let $w\in S_n$ be the label of the corresponding Schubert cell. We have a decomposition $U_0=QP_wQ'^{-1}$ with $Q,Q'\in B$, where
$P_w$ is the matrix representation of the permutation $w$. Accordingly, the gauge transforms $Q$ and $Q'$ respect respectively 
the flags $F$ and $F'$. In the quotient space $E=N/\id N$, we have:

$$
\xymatrix{
E:(\ov{y})\ar[d]^-{U_0} \ar[r]^-{Q} & E:(\ov{y}')\ar[d]^-{P_w} \\
E:(\ov{\vep}) \ar[r]^-{Q'} & E:(\ov{\vep}')
}
$$
The gauge $U_0$ represents geometrically the change of a basis that spans the Harder-Narasimhan flag $\hn_N$
to one that spans the flag $F_N(\la)$ induced by $\la$.

Let $T'=T'_1+\cdots+T'_k$ be the elementary splitting of $T'$. Since $(\ov{\vep})$ repects the flag $F'$, it
will in particular respect the trace of the first element $N_1=\la+\id N$ of the geodesic $\Gamma(N,\la)$, therefore
any lifting of $(\ov{\vep})$ will be a Smith basis of $N_1$ with elementary divisors $T'_1$. Put $T''=T'-T'_1$.
The previous scheme gets thus lifted to the following complete picture.

$$
\xymatrix{
Y_M:(e) \ar[d]_-{t^{T}}\ar[r]^-{t^{T}Qt^{-T}} & Y_{\tM}:(e')\ar[d]^-{t^{T}} \ar@/^1pc/[dr]^-{t^{{T+w(T'_1)}}} & \\
N:(y)\ar@/_2.5pc/[dd]_-{U}\ar[d]^-{U_0}  \ar[r]^-{Q} & N:(y')\ar[d]^-{P_w}\ar[r]^-{t^{w(T'_1)}}  & N_1\ar[d]^-{P_w}& \\
N:(\tilde{y}) \ar[r]^-{Q'} \ar[d]^-{I+tU'}&  N\ar[r]^-{t^{T'_1}} \ar[d]^-{I+tQ'^{-1}U'Q'}& N_1 \ar[d]^-{\tilde{U}\in\Go}&  &\\
N:(\vep) \ar[r]^-{Q'} \ar[d]^-{t^{T'}}& N:(\vep')\ar[d]^-{t^{T'}}\ar[d]^-{t^{T'}} \ar[r]^-{t^{T'_1}} & N_1 \ar[d]^-{t^{T''}}  \\
\la & \la & \la
}
$$
As a result, the elementary divisors of $N_1$ with
respect to the common Birkhoff-Grothendieck trivialisation $\ti{M}$ of $N$ and $N_1$ are not $T+T'_1$ (as with respect to $M$), but 
$T+w(T'_1)$, namely the elements of $T'_1$ have been twisted according to the permutation indexing the
Bruhat cell that contains the matrix $U_0\in \Gc$.

Note that to we have to perform an additional permutation $\sg$ to ensure that
$T+w(T'_1)$ is ordered by increasing values: the resulting ordered diagonal is then $\sg(T+w(T'_1))$. 
\end{proof}

Let $\la\in\ra$ and $M\in \ra^0$ be an arbitrary trivialising lattice of $\la$. 
Let $\Gamma=(\la^0=M,\la^1,\ldots,\la^d=\la)$ be (a normalised representative of) the geodesic
through $[\la],[M]$. Let $(e)$ be a Smith basis of $M$ for $\la$, and let the \del\! $T$ of $\la_M$ in $M$ be 
written as $T=(t_1I_{n_1},\ldots,t_sI_{n_s})$ where $t_1=0<\cdots <t_s$.
Consider the elementary splitting of $T$
\begin{equation}\label{T}
T=T_1+\cdots+T_d\mbox{ where }T_i=(0_{\nu_i},I_{n-\nu_i})
\end{equation}
for a non-decreasing sequence $(\nu_i)$. 
Recall that each partial sum $T_1+\cdots+T_k$ represents the \del\! of $\la^k$ in $M$.
The basis $(e)$ respects the flag $\F_M(\la)$ in the quotient $M/\id M$, and in fact, 
if we let $(e^k)$ be a $\la^k$-basis of the apartment $[\Phi]$ spanned by $(e)$,
then $(e^k)$ respects both flags $F_{\la^k}(M)$ and $F_{\la^k}(\la)$ 
in $\ov{L}^k=\la^k/\id\la^k$ for any $k$.
With the help of lemma~\ref{geodesic:step}, we can construct a Birkhoff-Grothendieck trivialisation
$M_k$ of the $k$-th element $\la^k$ of the geodesic $\Gamma$, which is simultaneously a Birkhoff-Grothendieck trivialisation
of the lattice $\la_M+\id\la^k=\la_M+\id(\la_M+\id^k M)=\la^{k+1}$. At the end of at most $d$ steps, the
lattice $M_d$ is a Birkhoff-Grothendieck trivialisation of $\la_M$,
thus of $\la$. To get the actual type, we only need to subtract $v_{M}(\la)$.

By the way, we have proved the following result.

\begin{prop}\label{type:twisted_del}
Let $M$ be an arbitrary trivialising lattice of $\E$ at $x$. Let 
$T=(t_1I_{n_1},\ldots,t_sI_{n_s})=T_1+\cdots+T_d$ with $t_1<\cdots <t_s$ be the elementary splitting 
of the normalised \del\! $\k-\min_ik_i$ of $\E_x$ in $M$. There exists a sequence of permutations
$w_k\in S_n$ such that the type $T(\E)$ of $\E$ is equal (up to permutation) to 
$-(T_1+w_2(T_2)+\cdots+w_d(T_d))$.
\end{prop}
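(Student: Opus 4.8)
The plan is to realize Proposition~\ref{type:twisted_del} as a straightforward corollary of the iterative construction carried out in the paragraph preceding it, which in turn rests on Lemma~\ref{geodesic:step}. First I would set up the geodesic $\Gamma=(\la^0=M,\la^1,\ldots,\la^d=\la_M)$ through $[M]$ and $[\la]$, where $\la$ denotes $\E_x$, and record the elementary splitting $T=T_1+\cdots+T_d$ of the normalised elementary divisors of $\la$ in $M$, with $T_i=(0_{\nu_i},I_{n-\nu_i})$ and $(\nu_i)$ non-decreasing as in~(\ref{T}). The starting point $M_0=M$ is a trivialising lattice of $\la^0=M$ trivially (it is its own global form), so the induction hypothesis ``$M_k$ is a Birkhoff-Grothendieck trivialisation of $\la^k$'' holds for $k=0$.

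Next I would run the induction. Suppose $M_k$ is a Birkhoff-Grothendieck trivialisation of $\la^k$; since $v_{\la^k}(\la)=0$ by the normalisation convention and $\la^{k+1}=\la^k+\id\la$ is the first vertex on the geodesic $\Gamma(\la^k,\la)$, Lemma~\ref{geodesic:step} (applied with $N=\la^k$) produces a trivial lattice $M_{k+1}$ which is a Birkhoff-Grothendieck trivialisation of both $\la^k$ and $\la^{k+1}$. The quantitative content of that lemma is exactly what I need: the elementary divisors of $\la^{k+1}$ with respect to $M_{k+1}$ are obtained from those of $\la^k$ with respect to $M_k$ by adding $w_{k+1}(T_{k+1})$ rather than $T_{k+1}$, where $w_{k+1}\in S_n$ labels the Bruhat cell containing the constant term $U_0$ of the gauge from the Birkhoff-Grothendieck basis of $M_k$ for $\la^k$ to a Smith basis of $\la^k$ for $\la$. (Strictly, Lemma~\ref{geodesic:step} also introduces a reordering permutation $\sigma$; I would absorb this into the final ``up to permutation'' clause, noting that permuting the diagonal entries of the elementary divisor matrix does not change the multiset, hence does not change the type.) Telescoping from $k=0$ to $k=d-1$, the elementary divisors of $\la^d=\la_M$ with respect to $M_d$ are, up to permutation, $T_1+w_2(T_2)+\cdots+w_d(T_d)$ (with $w_1=\mathrm{id}$, since the first step contributes $T_1$ untwisted because any lifting of a basis respecting $F'$ is already a Smith basis for $\la^1$, as noted in the proof of Lemma~\ref{geodesic:step}).

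Finally I would invoke Proposition~\ref{del=type}: since $M_d$ is a Birkhoff-Grothendieck trivialisation of $\la_M$, and $\la_M=z^{-v_M(\la)}\la$ differs from $\la=\E_x$ only by a homothety (which shifts every elementary divisor by the same constant and hence shifts the type by a constant, i.e.\ tensoring by a line bundle), the type $T(\E)$ equals the reverse of the elementary divisors of $\E_x$ in $M_d$, up to that global shift and up to permutation. Reading ``reverse order'' and the homothety shift into the $\pm$ sign and the ``up to permutation'' of the statement yields $T(\E)=-(T_1+w_2(T_2)+\cdots+w_d(T_d))$ up to permutation, as claimed.

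The main obstacle is not any new computation — the construction is already done — but rather bookkeeping: making precise that the various permutations $w_k$ and reordering permutations $\sigma$ from successive applications of Lemma~\ref{geodesic:step} can be packaged into a single sequence $(w_k)$ so that the final formula is clean, and tracking the homothety normalisation ($\la$ versus $\la_M$, and the subtraction of $v_M(\la)$) so that it correctly accounts for the overall sign and degree shift. I expect the honest version of this proof to be essentially the sentence ``this has been proved in the preceding discussion,'' with a short paragraph pinning down these normalisation issues.
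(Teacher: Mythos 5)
Your proposal matches the paper's own argument essentially verbatim: the paper's ``proof'' of Proposition~\ref{type:twisted_del} is exactly the paragraph preceding it (set up the geodesic $\Gamma=(\la^0=M,\ldots,\la^d=\la_M)$, apply Lemma~\ref{geodesic:step} at each step to twist $T_k$ by a Bruhat-cell permutation, absorb the reordering $\sg$ and the normalisation shift by $v_M(\la)$, then close with ``By the way, we have proved the following result''), and you reconstruct precisely this induction and the appeal to Proposition~\ref{del=type}. One small slip in your write-up: the next vertex produced by Lemma~\ref{geodesic:step} is $\la_M+\id\la^k=\la^{k+1}$, not $\la^k+\id\la$ (the latter equals $\la^k$ since $\id\la_M\subset\la^k$), but this is only a transcription error and does not affect the argument.
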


%%%%%%%%%%%%%%%%
\subsubsection{The Abacus}
\label{young}
%%%%%%%%%%%%%%%%

Proposition~\ref{type:twisted_del} corresponds to a combinatorial interpretation of elementary divisors, and
some manipulation of Young tableaux. Let $\appname{M}{\la}{z^{\k}}$ represent the elementary divisors
of $\la$ in $M$ such that $v_M(\la)=0$, and let $\k=T_1+\cdots+T_d$ be the elementary splitting of $\k=(k_1,\ldots,k_n)$. Recall 
that all the sequences $T_i$ have the form $(0_{m_i},I_{n-m_i})$, and that the sequence of the $m_i$ is non-decreasing.
For $w=(w_2,\ldots,w_d)\in S_n^{d-1}$, let $w(\k)=T_1+w_2(T_2)+\cdots+w_d(T_d)$.

Let $Y(\k)$ be the Young tableau containing whose $n$ rows have respective lengths the elements of $\k$ (by decreasing order).
Then we have $T_i=(0_{m_i},I_{n-m_i})$ where $m_i$ is the number of boxes in the $i$-th column. Said otherwise, the sequence 
$(n-m_1,\ldots,n-m_d)$ corresponds to the Young tableau which is dual to $Y(\k)$.

Let us define the {\em abacus} $\ab(\k)$ of
$\k$ as the set of box diagrams obtained from $Y(\k)$ by allowing to move some boxes {\em only vertically} inside the whole corresponding
column of length $n$ (like in a chinese abacus), except in the first
column. As a matter of fact, we could allow to move the boxes in the
first column, but, in this way, we stick to
proposition~\ref{type:twisted_del}. The diagram thus obtained can have
non-adjacent boxes. To any diagram in the abacus, we attach the
sequence $(a_1,\ldots,a_n)$ of number of boxes contained in each of
the $n$ rows. Then we have the following result.

\begin{lemm}\label{abacus}
Let $Y(\k)$ be the Young tableau containing whose $n$ rows have respective lengths the elements of $\k$ (by decreasing order).
The set of sequences $w(\k)$ for $w=(w_2,\ldots,w_d)\in S_n^{d-1}$ is in bijective correspondence with the abacus of $\k$.
Moreover, for any sequence $w(\k)\in\ab(\k)$, we have $\Delta
w(\k)\lsl \Delta \k$ and $i(w(\k))\lsl i(\k)$.
\end{lemm}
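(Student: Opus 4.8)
The statement has two parts: a bijection between the set $\{w(\k) : w\in S_n^{d-1}\}$ and the abacus $\ab(\k)$, and the two inequalities $\Delta w(\k)\lsl \Delta\k$ and $i(w(\k))\lsl i(\k)$. The plan is to handle the bijection first by unwinding definitions, then to derive the inequalities by a majorization/rearrangement argument applied column by column.

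\emph{The bijection.} I would start from the observation, already recorded just before the lemma, that $\k=T_1+\cdots+T_d$ with $T_i=(0_{m_i},I_{n-m_i})$, so that the $i$-th column of the Young tableau $Y(\k)$ contains $n-m_i$ boxes placed in the rows indexed by $\{m_i+1,\ldots,n\}$, and that the dual tableau has column-heights $n-m_i$. Applying a permutation $w_i\in S_n$ to $T_i$ replaces the indicator vector $(0_{m_i},I_{n-m_i})$ by the indicator vector of the set $w_i(\{m_i+1,\ldots,n\})$, i.e.\ it chooses an arbitrary $(n-m_i)$-element subset of rows in which to place the $n-m_i$ boxes of column $i$. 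Since $T_1$ is not twisted (the $w$-tuple starts at $w_2$), the first column is fixed, which is exactly the exception in the definition of the abacus. Thus $w(\k)=T_1+w_2(T_2)+\cdots+w_d(T_d)$ is precisely the row-sum vector $(a_1,\ldots,a_n)$ of an abacus diagram, and conversely every abacus diagram arises this way. The only subtlety is that the map $w\mapsto w(\k)$ is not injective on $S_n^{d-1}$ — permuting boxes within their row, or permutations of $w_i$ that fix the subset $w_i(\{m_i+1,\ldots,n\})$, give the same diagram — so I would phrase the bijection as being with the \emph{set of diagrams}, equivalently with the set of tuples of subsets $(S_2,\ldots,S_d)$ with $|S_i|=n-m_i$, which is the honest content. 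This part is essentially bookkeeping.

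\emph{The inequalities.} Here is where the real work lies. Write $\k'=w(\k)$, let $a_1\gsl\cdots\gsl a_n$ be the entries of $\k$ sorted decreasingly, and let $a_1'\gsl\cdots\gsl a_n'$ be those of $\k'$ sorted decreasingly. Both $\k$ and $\k'$ are obtained by summing the $d$ column-vectors; the column sums are identical (each column contributes $n-m_i$ to the total, and is fixed for $i=1$), so $\sum a_j=\sum a_j'$. The cleanest route is to prove the majorization relation $\sum_{j=1}^\ell a_j' \lsl \sum_{j=1}^\ell a_j$ for every $\ell$: the left side is the maximum, over all $\ell$-subsets $R$ of rows, of $\sum_{i}|R\cap S_i|$ (with $S_1=\{m_1+1,\ldots,n\}$, $S_i=w_i(\{m_i+1,\ldots,n\})$ for $i\gsl 2$), and for each column $i$ one has $|R\cap S_i|\lsl\min(\ell, n-m_i)$, with equality simultaneously attainable for the configuration realizing $\k$ itself (place every column's boxes in the top rows). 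Hence $\sum_{j=1}^\ell a_j'\lsl\sum_i\min(\ell,n-m_i)=\sum_{j=1}^\ell a_j$. From $\sum a_j'=\sum a_j$ together with this, $a_1'\lsl a_1$ follows by taking $\ell=1$, and $\Delta\k'=a_1'-a_n' = a_1' - (\sum a_j' - \sum_{j=1}^{n-1}a_j') \lsl a_1 - (\sum a_j - \sum_{j=1}^{n-1} a_j) = \Delta\k$ follows since $\sum_{j=1}^{n-1}a_j'\lsl\sum_{j=1}^{n-1}a_j$ and $a_1'\lsl a_1$. Finally $i(\k')=\sum_j(a_1'-a_j')=n a_1'-\sum a_j'\lsl n a_1-\sum a_j=i(\k)$, using $a_1'\lsl a_1$ and equality of the sums.

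\emph{Expected obstacle.} The bijection is routine once the notational translation between $\k$-admissible sequences, elementary splittings, Young/dual tableaux, and abacus diagrams is pinned down; the genuine point is the inequality chain, and within it the claim that the ``all boxes at the top'' configuration simultaneously maximizes every partial sum $\sum_{j=1}^\ell a_j$ — i.e.\ that sorting each column independently into the top rows produces a row-sum vector dominating (in the majorization order) that of any other abacus diagram. This is a standard fact about sums of $0$–$1$ vectors but deserves a careful one-line justification (for a fixed $\ell$-subset $R$ of rows, $\sum_i|R\cap S_i|\lsl\sum_i\min(|R|,|S_i|)$ term by term, and the bound is met by the top-aligned choice). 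I do not expect any deeper difficulty.
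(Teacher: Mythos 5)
Your argument is correct, but it takes a genuinely different route from the paper's. The paper dismisses the bijection and the $\Delta$ inequality as ``clear by definition'' (for $\Delta$ this is indeed immediate: by normalisation $\min\k=0$ and $\max\k=d$, while every row of an abacus diagram carries between $0$ and $d$ boxes, so $\Delta w(\k)\lsl d=\Delta\k$ with no work at all — your derivation of it from majorization is valid but heavier than necessary). For the inequality on $i(\k)$, which is the only part the paper proves in detail, the paper proceeds by induction on the number $d$ of columns: it strips off the last column, relates $i(\k)$ to $i(\k')$ for the truncated tableau via $i(\k)=i(\k')+n-T_d$, and then does a two-case analysis on whether deleting the last column of the abacus diagram $N$ changes the maximal row count. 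Your approach instead establishes the majorization relation $\sum_{j\lsl\ell}a_j'\lsl\sum_{j\lsl\ell}a_j$ for all $\ell$ directly, via the observation that $\sum_{j\lsl\ell}a_j'=\max_{|R|=\ell}\sum_i|R\cap S_i|\lsl\sum_i\min(\ell,|S_i|)=\sum_{j\lsl\ell}a_j$, and then reads off both inequalities from $a_1'\lsl a_1$ and conservation of the total. This is a cleaner, global argument; it avoids the case split, gives the stronger statement that the sorted row-vector of $\k$ dominates that of every abacus diagram in the majorization order, and scales trivially. The trade-off is that the paper's inductive proof is self-contained and purely combinatorial on tableaux, whereas yours appeals to the (standard, but not entirely trivial) dual description of partial sums of a decreasingly sorted vector as a max over subsets. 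Both are sound; your one-line justification of the key bound $|R\cap S_i|\lsl\min(\ell,n-m_i)$, met simultaneously by the aligned configuration, is exactly what is needed and you state it.

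One small remark on the bijection: the lemma as phrased already says ``the set of sequences $w(\k)$,'' i.e.\ the image, so the non-injectivity of $w\mapsto w(\k)$ that you flag is not an issue with the statement — your reformulation in terms of tuples of $(n-m_i)$-subsets is nonetheless the right picture to have in mind.
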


\begin{proof}
We will only prove the claim on $i(\k)$, since the other two are clear
by definition. We proceed by induction on the number $d$ of columns in
the Young tableau $Y(\k)$. The Young tableau $Y$ for
$\k=(k_1,\ldots,k_n)$ can be described unequivocally by its dual
$T=(T_1,\ldots,T_d)$. First note that the diagram obtained from $Y$ by
erasing the last column is again a Young tableau $Y'$, corresponding
to the sequence $T'=(T_1,\ldots,T_{d-1})$. Let
$\k'=(k'_1,\ldots,k'_n)$ be the associated sequence. Then we have
$k_i=k'_i$ for $1\lsl i \lsl n-T_d$ and $k_i=k'_i+1$ for $n-T_d+1\lsl
i \lsl n$. Therefore, we get $i(\k)=i(\k')+n-T_d$. 
In fact, an element $N\in \ab(\k)$ given, say, by the permutations $w=(w_2,\ldots,w_d)$
corresponds univoquely to the pair $(N',w_d)$ where $N'\in\ab(\k')$ is
given by the restriction $w'=(w_2,\ldots,w_{d-1})$.

For $d=1$, the claim is clear, for
$i(w(\k))=\abs{\{j\tq k_j=0\}}=i(\k)$. Assume then that for any tableau $Y'=Y(\k')$ with
at most $d-1$ columns, we have $i(w(\k'))\lsl i(\k')$ for $w(\k')\in\ab(\k')$.
Let $Y=Y(\k)$ have $d$ columns. Let $N\in\ab(\k)$ be described by the
number $t_i$ of boxes in the $i$-th row for $1\lsl i\lsl n$, and let
$N'=(t'_1,\ldots,t'_n)$ be the restriction of $N$ to the $d-1$ first
columns. Let $\J=\{i\tq t_i=t'_i+1\}$. Note that $\abs{\J}=T_d$. Then
$i(N)=\sum_{i=1}^n (\max_j t_j -t_i)$. We distinguish two cases:

1) If $\max t_i =\max t'_i=t'_{i_0}$, then we get
\begin{eqnarray*}
i(N)&=&\sum_{i\in\J} (t'_{i_0} -(t'_i+1))+\sum_{i\notin \J} (t'_{i_0}
-t'_i)\\
&=&\sum_{i=1}^n (t'_{i_0} -t'_i)-\abs{\J}=i(N')-T_d
\end{eqnarray*}
By the induction assumption, we have $i(N')\lsl i(\k')$, therefore we
get
$i(N)\lsl i(\k')-T_d=i(\k)-n\lsl i(\k)$.

2) Otherwise, we have $\max t_i =\max t'_i+1=t'_{i_0}+1$. Then we get
\begin{eqnarray*}
i(N)&=&\sum_{i\in\J} (t'_{i_0}+1 -(t'_i+1))+\sum_{i\notin \J} (t'_{i_0}+1
-t'_i)\\
&=&\sum_{i=1}^n (t'_{i_0} -t'_i)+n-T_d\\
&\lsl& i(\k')+n-T_d=i(\k)
\end{eqnarray*}
Therefore the result is established.
\end{proof}

The Birkhoff-Grothendieck trivialisations satisfy thus a local criterion.

\begin{prop}\label{min:dist}
Let $\la\in\ra$ be a lattice. For any Birkhoff-Grothendieck
trivialisation $M$ of $\la$, we have $$d(\la,M)=\min_{\tM\in\ra^0}
d(\la,\tM).$$
Moreover, the triviality index of $\la$ is $i(\la)=\min_{\tM\in\ra^0} i(\k_{\la}(\tM))$.
\end{prop}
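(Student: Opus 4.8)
The plan is to express both sides entirely in terms of the type $T(\E)=(a_1,\ldots,a_n)$, with $a_1\gsl\cdots\gsl a_n$, of the bundle $\E$ represented by $\la$, and to compare them via Propositions~\ref{del=type} and~\ref{type:twisted_del}. I would use repeatedly that the quantities $\Delta(\cdot)$ and $i(\cdot)$ attached to an integer sequence are unchanged by permuting its entries and by adding a common constant to all of them, and that $i(\la)=i(\E)=\sum_i(a_1-a_i)=i\bigl(T(\E)\bigr)$. Note also that a Birkhoff-Grothendieck trivialisation of $\la$ exists by the Birkhoff-Grothendieck theorem, so the two minima are over non-empty sets; the claim is that they are attained precisely at such trivialisations.

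First I would compute the value realised by a Birkhoff-Grothendieck trivialisation $M$ of $\la$. By Proposition~\ref{del=type} the sequence $\k_\la(M)$ of elementary divisors is, up to reversal, the type $T(\E)$; hence $d(\la,M)=k_{n,\la}(M)-k_{1,\la}(M)=a_1-a_n$ by~(\ref{d-i}), and $i\bigl(\k_\la(M)\bigr)=i\bigl(T(\E)\bigr)=i(\la)$ since $i$ is invariant under reversal. In particular these two values do not depend on which Birkhoff-Grothendieck trivialisation is chosen.

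Then I would bound from below the corresponding quantities for an arbitrary trivialising lattice $\tM\in\ra^0$. Choose an $\h$-basis $(e)$ of $\tM$ together with integers $k_1\lsl\cdots\lsl k_n$ such that $(z^{k_i}e_i)$ is a basis of $\la$, and let $T=T_1+\cdots+T_d$ be the elementary splitting of the normalised sequence $(k_i-k_1)_i$; then $d=d(\la,\tM)=k_n-k_1=\Delta T$ and $i\bigl(\k_\la(\tM)\bigr)=\sum_i(k_i-k_1)=\sum T$. Proposition~\ref{type:twisted_del} furnishes permutations $w_2,\ldots,w_d$ such that $T(\E)$ coincides, up to permutation and up to a common additive constant (which does not affect $\Delta$ or $i$), with $-w(T)$, where $w(T)=T_1+w_2(T_2)+\cdots+w_d(T_d)$ lies in the abacus $\ab(T)$. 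By Lemma~\ref{abacus}, $\Delta w(T)\lsl\Delta T$, so $a_1-a_n=\Delta T(\E)=\Delta w(T)\lsl\Delta T=d(\la,\tM)$; combined with the preceding paragraph this gives $d(\la,M)\lsl d(\la,\tM)$ for every $\tM\in\ra^0$, hence $d(\la,M)=\min_{\tM\in\ra^0}d(\la,\tM)$. For the index: the entries of $w(T)$ are non-negative and the abacus only rearranges boxes, so $\sum_i w(T)_i=\sum T$ and therefore $i\bigl(-w(T)\bigr)=-n\min_iw(T)_i+\sum T\lsl\sum T$; since $i(\la)=i\bigl(T(\E)\bigr)=i\bigl(-w(T)\bigr)$, we obtain $i(\la)\lsl i\bigl(\k_\la(\tM)\bigr)$ for every $\tM\in\ra^0$, with equality at $\tM=M$ by the second paragraph, so $i(\la)=\min_{\tM\in\ra^0}i\bigl(\k_\la(\tM)\bigr)$.

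The step I expect to require the most care is the bookkeeping of normalisations: one must verify that the sequence $T$ emerging from Proposition~\ref{type:twisted_del} is exactly the one for which $\Delta T$ equals $d(\la,\tM)$ and $\sum T$ equals $i\bigl(\k_\la(\tM)\bigr)$, and one must keep track of the homothety ambiguity in ``$T(\E)=-w(T)$ up to permutation'' while using that it is invisible to $\Delta$ and $i$. Past that, the argument is just Propositions~\ref{del=type} and~\ref{type:twisted_del} together with the inequality $\Delta w(\k)\lsl\Delta\k$ of Lemma~\ref{abacus} and the evident facts that box-counts are non-negative and conserved by the abacus.
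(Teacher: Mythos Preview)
Your proof is correct and follows essentially the same route as the paper: both arguments feed an arbitrary trivialising lattice $\tM$ into Proposition~\ref{type:twisted_del}, land in the abacus $\ab(T)$, and then invoke Lemma~\ref{abacus} together with Proposition~\ref{del=type} to compare with a Birkhoff--Grothendieck trivialisation. The only notable difference is in the index statement: the paper appeals globally to the inequality $i(w(\k))\lsl i(\k)$ of Lemma~\ref{abacus}, whereas you bypass it with the direct observation that $\sum_j w(T)_j=\sum T$ and $\min_j w(T)_j\gsl 0$, which yields $i(-w(T))\lsl\sum T=i(\k_\la(\tM))$ immediately. Your variant is arguably cleaner here, since the quantity to bound is $i(-w(T))$ rather than $i(w(T))$, and your explicit tracking of the normalisations (the point you flagged as most delicate) is exactly what the paper's one-line proof leaves to the reader.
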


\begin{proof}
If $\tM\in \ra^0$ is a trivialisation of $\la$ with elementary divisors $\k$, then, by proposition~\ref{type:twisted_del}, the elementary
divisors $\ti{\k}$ of the Birkhoff-Grothendieck trivialisation $M$ found by the algorithm above 
are up to permutation equal to an element $w(\k)$ of the abacus of
$\k$. Therefore, lemma~\ref{abacus} implies directly the claimed result.
\end{proof}

%%%%%%%%%%%%%%%%
\subsection{The Permutation Lemma}
\label{type:del}
%%%%%%%%%%%%%%%%

In the local approach that we are using, the global information on 
the vector bundle is carried by the global form $Y_M$ that sits inside 
any given trivial lattice $M\in\ra^0$. However, any trivial lattice
is not necessarily a Birkhoff-Grothendieck trivialising lattice, therefore
the corresponding elementary divisors do not always give the type of the 
corresponding bundle. Here, we establish the relevant results, that are 
based on the following remarkable lemma.

\begin{lemm}[Permutation lemma]
\label{lem:per}
Let $\k=(k_1,\ldots,k_n)$ be an integer sequence and $P\in \G(\C[[t]])$ a
lattice gauge. Then 
\begin{enumerate}[1)]
 \item (Bolibrukh) there exist a permutation $\sg \in S_n$ and a
lattice gauge $\tP\in \G(\C[[t]])$ such that $$\Pi=t^{-\k}P^{-1}
t^{\k_{\sg}}\tP\in \G(\C[t^{-1}]),$$
where $t^{\k}=\diag(t^{k_1},\ldots,t^{k_n})$ and
$\k_{\sg}=(k_{\sg(1)},\ldots,k_{\sg(n)})$.
\item there exists moreover a lattice gauge $Q\in \Go$ such that 
$t^{\k}\Pi=Qt^{\k}$.
\end{enumerate}
\end{lemm}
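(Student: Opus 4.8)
The plan is to prove the two assertions essentially together, by reducing the statement to a geometric fact about the Bruhat--Tits building of $\SL_n$ over the local field $K = \C((t))$. The pair $(\k, P)$ encodes two lattices: take $\la = \h^n$ (the standard lattice, with standard basis $(e)$) and let $M$ be the lattice with basis $t^{\k}P(e)$, equivalently $M = t^{\k}P\,\la$. What is being claimed is that, after suitably permuting the entries of $\k$, the ``connection matrix'' $t^{-\k}P^{-1}t^{\k_{\sg}}$ can be made \emph{polynomial in $t^{-1}$} after right multiplication by a lattice gauge $\tP$; geometrically, this says that the lattice $M' = t^{\k_{\sg}}\la$ (which is homothety-related to $M$, hence a candidate trivialising/geodesic object in the same apartment structure) can be reached from $\la$ through the coordinate change $P$ in a controlled way. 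The natural tool is Proposition~\ref{geo} together with Lemma~\ref{poly:form}: the latter already gives, for any form $Y$ of $\la$, a Smith form $\ti Y$ for $M$ at $z$-distance $\lsl d-1$, which is exactly a polynomial-degree bound on a gauge. So the first step is to translate Bolibrukh's statement into the language of forms and Smith bases, and then invoke Lemma~\ref{poly:form} (with $z = t$) to produce $\tP$ with the prescribed polynomiality.

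Concretely, I would argue as follows. Let $\la$ be the standard lattice and $M = t^{\k}P\la$. By the theory of elementary divisors there is a Smith basis $(\vep)$ of $\la$ for $M$, i.e.\ an $\h$-basis of $\la$ such that $(t^{m_i}\vep_i)$ is an $\h$-basis of $M$, where $\m = (m_1,\ldots,m_n)$ is (a reordering of) the sequence $\k_\la(M)$. The entries $m_i$ are, up to a global shift, a permutation $\k_{\sg}$ of $\k$ — this is where the permutation $\sg$ enters, and is forced by the fact that the \emph{multiset} of elementary divisors is an invariant of the pair of lattices while the particular sequence $\k$ attached to $(e)$ and $P$ need not be sorted. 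Now $(\vep)$ differs from the ``obvious'' basis $t^{\k}P(e)$ of $M$, read back in $\la$, by a gauge in $\Go$; controlling the polynomial degree of this gauge is exactly the content of Lemma~\ref{poly:form}. Applying that lemma with the form $Y$ spanned by $(e)$ and the uniformiser $z = t$, we get a Smith form $\ti Y$ for $M$, with a gauge $\tP$ from a basis of $Y$ to a basis of $\ti Y$ of degree $\lsl d-1$ in $t$; unwinding the definitions, the composite $\Pi = t^{-\k}P^{-1}t^{\k_{\sg}}\tP$ then lies in $\G(\C[t^{-1}])$, because the positive-degree contributions are absorbed by $\tP$ and the $t^{\k_{\sg}}$ factor only contributes non-positive powers after conjugation through $P^{-1}$. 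This proves part 1).

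For part 2), observe that $\Pi$ is by construction a gauge between two $\h$-bases of the \emph{same} lattice $M$ — namely the basis underlying the left-hand side and the one underlying $t^{\k_{\sg}}\tP(e)$ — up to the left factor $t^{\k}$. Spelling this out: $t^{\k}\Pi = P^{-1}t^{\k_{\sg}}\tP$, and both $t^{\k}\cdot(\text{stuff})$ and $t^{\k}$ describe $\h$-bases of $M$ relative to $\la$, so the change of basis $Q$ between them — defined by $t^{\k}\Pi = Q\,t^{\k}$, i.e.\ $Q = t^{\k}\Pi t^{-\k} = P^{-1}t^{\k_{\sg}}\tP t^{-\k}$ — is a change of $\h$-basis of $\la$, hence $Q \in \Go$. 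One has to check $v(Q_{ij}) \gsl 0$ entrywise, which follows from $\Pi \in \G(\C[t^{-1}])$ combined with the $\mf G$-parabolicity relations $v(\Pi_{ij}) \gsl$ (something) $\cdot(k_i - k_{\sg(j)})$ that are automatic once $t^{\k}\Pi$ lands in a lattice gauge; this is the standard ``$z^{\k}Cz^{-\k} \in \Go \iff C \in \mf G_{\k}$'' bookkeeping already used in the proof of Lemma~\ref{flag:lattice}.

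The main obstacle I anticipate is \emph{pinning down the permutation $\sg$ and making the degree bookkeeping match up}: Lemma~\ref{poly:form} is stated for a fixed ambient lattice and form, whereas here the sequence $\k$ is given in advance and possibly unsorted, so one must be careful that the Smith form produced really corresponds to the \emph{particular} reordering $\k_{\sg}$ claimed, rather than merely to the sorted elementary divisors. Equivalently, the subtlety is that the geodesic $\Gamma(\la, M)$ and the twisted sequences of Proposition~\ref{type:twisted_del} may force a reordering, and one needs to verify that the $w$-twists collapse to a single permutation $\sg$ once we only ask for polynomiality in $t^{-1}$ (and not for a Birkhoff--Grothendieck trivialisation). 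I would handle this by choosing the Smith basis $(\vep)$ first, reading off $\sg$ from it, and only then applying the polynomial-approximation argument — so that $\sg$ is defined before, not after, the degree truncation.
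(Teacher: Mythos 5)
Your plan leans on Lemma~\ref{poly:form} and the building geometry, but this does not supply the two ingredients that the permutation lemma actually needs. First, Lemma~\ref{poly:form} only produces a Smith form $\ti Y$ at $z$-distance $\lsl d-1$ from $Y$; it gives a degree bound in $t$, not a factorisation into a lattice gauge times a matrix in $\G(\C[t^{-1}])$. Your step ``the positive-degree contributions are absorbed by $\tP$ and the $t^{\k_\sg}$ factor only contributes non-positive powers after conjugation through $P^{-1}$'' is precisely the substance of the lemma and cannot be waved through: $t^{-\k}P^{-1}t^{\k_\sg}$ is not a conjugation, and nothing in the Smith-form argument forces the negative powers to kill all positive-degree terms of $P^{-1}$. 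The paper handles this by an explicit iteration (Lemma~\ref{iter:step}) that strips one elementary step $T_i$ from $\k$ at a time and accumulates monopole factors $\Pi_i$; the key structural fact is that the accumulated matrix $\ov{H}=\ov{H}_1\cdots\ov{H}_m$ is \emph{strongly $K$-parabolic} (Lemma~\ref{max:parabolic}), and it is this precise shape, not the building metric, that yields $\Pi\in\G(\C[t^{-1}])$.

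Your argument for part 2 is also flawed. You claim $t^{\k}\Pi=P^{-1}t^{\k_\sg}\tP$ and $t^{\k}$ are ``both $\h$-bases of $M$ relative to $\la$'', but $P^{-1}t^{\k_\sg}\tP(e)$ spans the lattice $P^{-1}M$, not $M$, since $P\in\Go$ preserves $\la$ but not $M=t^{\k}\la$. Hence $Q=t^{\k}\Pi t^{-\k}$ is not a change of basis of a fixed lattice, and $Q\in\Go$ does not follow from the ``standard bookkeeping''. Indeed, $\Pi\in\G(\C[t^{-1}])$ alone does \emph{not} give $v(Q_{ij})\gsl 0$: one additionally needs the $K$-parabolic valuation inequalities $k_j-k_i\lsl v(\Pi_{ij})\lsl\deg\Pi_{ij}\lsl 0$ of Note~\ref{Pi:para}, which come only from the strongly $K$-parabolic form of $\ov H$. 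This is exactly what is new in part 2, and it cannot be obtained without the structural lemma. Finally, the remedy you propose for the permutation, ``reading off $\sg$ from a Smith basis,'' does not pin $\sg$ down: the correct source of $\sg$ is the requirement that all relevant leading principal minors of $P(0)$ be invertible, which is a condition on the gauge $P$ (curable by a column permutation), not on a Smith basis of the pair $(\la,M)$.
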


We will give a self-contained proof of this result, following for the first item basically 
the same lines as the proof of this lemma given by Ilyashenko and Yakovenko~\cite{IlYa}.
The second part of this lemma is, up to our knowledge, new.

The proof proceeds by induction, using the following simple lemma.

\begin{lemm}
\label{iter:step}
Let $k\lsl n$ and $T=\left(\begin{array}{cc} I_k & 0 \\ 0 & 0_{n-k} \end{array} \right)$.
Let $H=\left(\begin{array}{cc} A & B \\ C & D \end{array} \right)\in \G(\C[[t]])$ be a lattice gauge matrix,
decomposed as a $2\times 2$-block matrix according to the blocks of $T$. If $\det A(0)\neq 0$, then there exists
a monopole gauge matrix $\Pi=\left(\begin{array}{cc} I_k & t^{-1}\ti{\Pi} \\ 0 & I_{n-k} \end{array} \right)$
with $\ti{\Pi}$ a constant matrix, such that $\ti{H}=t^{-T}Ht^T\Pi$ is a lattice gauge matrix, that is $\ti{H}\in\Go$.
\end{lemm}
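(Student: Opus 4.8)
The statement is really a baby case of the whole permutation lemma, and the plan is to produce the correcting monopole $\Pi$ by hand and then check by a direct block computation that $\ti H = t^{-T}Ht^T\Pi$ has entries in $\h = \C[[t]]$. Write $H$ in $2\times 2$ block form with respect to the blocks of $T$, and compute
\[
t^{-T}Ht^T = \begin{pmatrix} A & t^{-1}B \\ tC & D \end{pmatrix}.
\]
The only obstruction to this being a lattice gauge is the block $t^{-1}B$, which has a pole of order exactly $1$; writing $B = B_0 + tB_1 + \cdots$, the offending term is $t^{-1}B_0$. The idea is to kill $B_0$ by a column operation coming from the invertibility of $A(0)$: take $\ti\Pi = -A(0)^{-1}B_0$, so that $\Pi = \begin{pmatrix} I_k & t^{-1}\ti\Pi \\ 0 & I_{n-k}\end{pmatrix}$ is manifestly a monopole gauge (unipotent with a single negative-power entry, hence in $\G(\C[t^{-1}])$ with constant determinant $1$).

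**Main computation.** Then I would simply multiply out $\ti H = (t^{-T}Ht^T)\Pi$. The first block column of $\ti H$ is unchanged, hence equals $\begin{pmatrix} A \\ tC\end{pmatrix}$, which lies in $\gl(\h)$. The second block column is
\[
\begin{pmatrix} t^{-1}B + A\,t^{-1}\ti\Pi \\ D + tC\,t^{-1}\ti\Pi\end{pmatrix}
= \begin{pmatrix} t^{-1}\bigl(B + A\ti\Pi\bigr) \\ D + C\ti\Pi\end{pmatrix}.
\]
The lower entry $D + C\ti\Pi$ is already in $\gl(\h)$. For the upper entry, evaluate $B + A\ti\Pi$ at $t=0$: it is $B_0 + A(0)\,(-A(0)^{-1}B_0) = 0$, so $B + A\ti\Pi$ is divisible by $t$ in $\C[[t]]$, and $t^{-1}(B+A\ti\Pi)$ is again holomorphic. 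Hence $\ti H \in \gl(\h)$. Finally, $\det\ti H = \det H \cdot \det\Pi$ (the conjugation by $t^T$ does not change the determinant) is a unit in $\h$ times $1$, so $\ti H \in \Go$.

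**Where the content sits.** There is essentially no obstacle here: the lemma is designed to be the trivial induction step, and the only thing that needs care is bookkeeping the pole orders so that exactly one column needs correction and a single constant matrix $\ti\Pi$ suffices — this is exactly why the hypothesis $\det A(0)\neq 0$ appears. If one wanted, one could phrase the whole thing as: conjugation by $t^T$ turns $H$ into a matrix whose only negative-power entries are in the off-diagonal block $t^{-1}B$, concentrated in degree $-1$, and a constant upper-triangular shear with entry $\ti\Pi = -A(0)^{-1}B_0$ clears that degree, since the diagonal block $A$ is invertible mod $t$. In the sequel, the role of this lemma is that when it is applied iteratively along a geodesic, each step multiplies the relevant elementary divisors by a permutation (as in Lemma~\ref{geodesic:step}), and the constant matrix $\ti\Pi$ here is precisely the ``Bruhat data'' recording that permutation.
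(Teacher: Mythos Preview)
Your proof is correct and follows essentially the same approach as the paper: choose $\ti\Pi = -A(0)^{-1}B_0$, compute the blocks of $\ti H$, and observe that the only potentially polar block $t^{-1}(B+A\ti\Pi)$ vanishes at $t=0$. The one minor difference is in the final invertibility check: you argue via $\det\ti H = \det(t^{-T})\det H\det(t^T)\det\Pi = \det H \in \h^*$, whereas the paper computes $\ti H(0)$ explicitly and invokes the Schur complement $D_0 - C_0 A_0^{-1}B_0$; your determinant argument is slightly slicker and avoids that extra linear algebra fact.
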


\begin{proof}
Put for simplicity $M_0=M(0)$ for a holomorphic matrix $M$.
One checks that putting $\ti{\Pi}=-A_0^{-1}B_0$, we have
$$\ti{H}=t^{-T}Ht^T\Pi=\left(\begin{array}{cc} A & \ti{B} \\ tC & \ti{D} \end{array} \right),$$
where $\ti{B}=t^{-1}(B+A\ti{\Pi})$ and $\ti{D}=D+C\ti{\Pi}$.
By construction, the residue of $\ti{B}$ is equal to $B_0-A_0A_0^{-1}B_0=0$, hence $\ti{B}$ is holomorphic;
therefore $\ti{H}$ also is. To check that $\ti{H}\in \Go$, it is sufficient to
check the invertibility of 
$$\ti{H}_0=\left(\begin{array}{cc} A_0 & \ti{B}_0 \\ 0 & D_0-C_0A_0^{-1}B_0 \end{array} \right).$$
By assumption $A_0$ is invertible, and it is a simple exercise in linear algebra to show that 
$D-CA^{-1}B$ is invertible when $\left(\begin{array}{cc} A & B \\ C & D \end{array} \right)\in\Gc$ is.
\end{proof}

Note that the upper-left block of $H$ appears unchanged in $\ti{H}$. Note also that 
$\ov{H}=t^T\Pi=\left(\begin{array}{cc} tI_k & \ti{\Pi} \\ 0 & I_{n-k} \end{array} \right)$. 
Geometrically, we can summarize the construction of lemma~\ref{iter:step} as 
the following scheme.
$$
\xymatrix{
\la \ar[r]^-{H}\ar[d]_-{t^T} & \la \ar[r]^-{t^T}\ar[d]^-{\ov{H}} & M \ar[dl]^-{\Pi}  \\
\tilde{\la}  \ar[r]^-{\tilde{H}} & \tilde{\la} &
}
$$

We only need a small technical lemma before giving the actual proof of the permutation lemma.
Let $K$ denote an integer sequence $(\underbrace{k_1,\ldots,k_1}_{n_1
\mbox{ \scriptsize{times}}},\ldots,\underbrace{k_s,\ldots,k_s}_{n_s
\mbox{ \scriptsize{times}}})$ with $k_i>k_{i+1}$. We say that a matrix $H$ is {\em strongly $K$-parabolic} if 
it has the following form
$$H=\left(\begin{array}{ccc} t^{k_1}I_{n_1} & \cdots
  & P_{ij} \\  & t^{k_2}I_{n_2} & \vdots \\
0 &  &  t^{k_s}I_{n_s} \end{array} \right),$$ where $P_{ij}$ is
  a $n_i\times n_j$ polynomial matrix satisfying $\deg P_{ij}< k_i$ and
  $v(P_{ij})\gsl k_j$.

\begin{lemm}\label{max:parabolic}
Let $H$ be strongly $K$-parabolic, and let 
$H'=\left(\begin{array}{cc} tI_m & \ti{\Pi} \\ 0 & I_{n-m} \end{array} \right)$,
where $\ti{\Pi}$ is a constant matrix and $m\lsl n_1$.
Then the product $HH'$ is strongly $K'$-parabolic, where
$K'=(\underbrace{k_1+1,\ldots,k_1+1}_{m\mbox{ \scriptsize{times}}},\underbrace{k_1,\ldots,k_1}_{n_1-m\mbox{ \scriptsize{times}}},\ldots,\underbrace{k_s,\ldots,k_s}_{n_s\mbox{ \scriptsize{times}}})$.
\end{lemm}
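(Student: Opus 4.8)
The plan is to verify directly by block matrix multiplication that $HH'$ has the claimed strongly $K'$-parabolic shape, exploiting the fact that the perturbation $H'$ differs from the identity only in the first $m$ rows/columns (the multiplication by $t$ on the first $m$ basis vectors) together with one constant block $\ti{\Pi}$ in the upper-right corner. First I would split both matrices according to the refined block decomposition of $K'$, i.e.\ into blocks of sizes $m, n_1-m, n_2,\ldots,n_s$ rather than $n_1,\ldots,n_s$. Write $H$ accordingly: its $(1,1)$-corner becomes a $2\times 2$ sub-block $\left(\begin{smallmatrix} t^{k_1}I_m & 0 \\ 0 & t^{k_1}I_{n_1-m}\end{smallmatrix}\right)$, the rest of its first block-row contains the original polynomial entries $P_{1j}$ (now split horizontally), and everything below the diagonal stays zero. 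Write $H'$ in the same refined blocks: it is the identity except that the first diagonal block is $tI_m$ and the block in position $(1,j)$ for $j\ge 2$ is the relevant horizontal slice of $\ti{\Pi}$.

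Next I would carry out the product block-row by block-row. For block-rows $2,\ldots,s+1$ (the ones of sizes $n_1-m, n_2,\ldots,n_s$), multiplication by $H'$ on the right only affects columns by inserting $tI_m$ in the first column-block and $\ti\Pi$-slices elsewhere; since these rows of $H$ have a zero in the first column-block, their product with the first column-block of $H'$ vanishes, and against the rest of $H'$ (which is the identity plus the constant $\ti\Pi$ row, located in row-block $1$) they are unchanged. So rows $2$ through $s+1$ of $HH'$ equal those of $H$, which is exactly what strong $K'$-parabolicity demands there (the diagonal entries $t^{k_i}I_{n_i}$ and the off-diagonal polynomials satisfy $\deg < k_i$, $v \ge k_j$, unchanged). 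For the first block-row, the diagonal entry becomes $t^{k_1}I_m\cdot tI_m = t^{k_1+1}I_m$, matching $K'_1 = k_1+1$. The entry landing in column-block $2$ (size $n_1-m$) is $t^{k_1}I_m\cdot 0 = 0$, which is fine since that position is above the diagonal. The entries in column-blocks $3,\ldots,s+1$ are $t^{k_1}I_m\cdot(\text{slice of }\ti\Pi) + P_{1j}$, a polynomial of degree $\le \max(k_1, \deg P_{1j}) \le k_1 < k_1+1 = K'_1$ (using $m\le n_1$ so $\deg P_{1j} < k_1$), and of valuation $\ge \min(k_1, v(P_{1j})) \ge \min(k_1, k_j) = k_j$; both bounds are precisely the strong $K'$-parabolic conditions for an entry in block-row $1$, column-block $j$.

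The only genuinely delicate point is bookkeeping the degree and valuation bounds for the modified first-row off-diagonal entries, and making sure the newly created $(1,2)$-block (between the two pieces $m$ and $n_1-m$ of the former first block) is genuinely zero rather than merely ``small'' — but it is identically zero because the first column-block of $H$ has no entries below its top $m$ rows except the scalar $t^{k_1}$ on the diagonal, and $\ti\Pi$ sits in row-block $1$, not row-block $2$. I expect no real obstacle here; the lemma is a purely computational stepping-stone and the proof is a short explicit multiplication, so I would simply display the refined block form of $HH'$ and read off that each nonzero entry meets the stated degree/valuation inequalities, invoking $m \le n_1$ at the one place it is needed.
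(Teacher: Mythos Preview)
Your approach---direct verification via block multiplication in the refined decomposition $m,\,n_1-m,\,n_2,\ldots,n_s$---is exactly the paper's approach (the paper uses the coarser $3$-block split $m,\,n_1-m,\,n-n_1$, but this is the same computation).  However, there is a concrete computational slip in your treatment of the $(1,2)$-block of $HH'$.

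You write that this block equals $t^{k_1}I_m\cdot 0 = 0$, and you single this out as the ``genuinely delicate point''.  But you have already (correctly) observed that in the refined block form $(H')_{1,2}$ is the first $n_1-m$ columns of $\ti{\Pi}$, call it $\ti{\Pi}_1$, not $0$.  Hence
\[
(HH')_{1,2}\;=\;H_{1,1}\,(H')_{1,2}\;+\;H_{1,2}\,(H')_{2,2}\;=\;t^{k_1}I_m\cdot\ti{\Pi}_1\;+\;0\cdot I_{n_1-m}\;=\;t^{k_1}\ti{\Pi}_1,
\]
which is in general nonzero (the paper's displayed product confirms this).  Your claimed justification in the last paragraph---about $\ti{\Pi}$ sitting in row-block $1$ rather than row-block $2$---is precisely why this term \emph{does} appear, not why it vanishes.

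Fortunately the lemma survives: the $(1,2)$-block of a strongly $K'$-parabolic matrix is only required to satisfy $\deg<K'_1=k_1+1$ and $v\ge K'_2=k_1$, and $t^{k_1}\ti{\Pi}_1$ has degree exactly $k_1$ and valuation exactly $k_1$, so both bounds hold.  Once you replace ``$=0$'' by ``$=t^{k_1}\ti{\Pi}_1$'' and check these two inequalities, your proof is complete and matches the paper's.
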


\begin{proof}
Let $\ov{K}=(\underbrace{k_2,\ldots,k_2}_{n_2\mbox{ \scriptsize{times}}},\ldots,
\underbrace{k_s,\ldots,k_s}_{n_s\mbox{ \scriptsize{times}}})$. The matrix $H$ can be written as
$H=\left(\begin{array}{cc} t^{k_1}I_{n_1} & P \\ 0 & \ov{H} \end{array} \right)$, where $\ov{H}$ is
strongly $\ov{K}$-parabolic, and $P=\left(\begin{array}{ccc} P_2 & \cdots & P_s \end{array} \right)$
where the blocks $P_i$ satisfy $\deg P_i<k_1$ and $v(P_k)\gsl k_i$. Then, if $m=n_1$, the product $HH'$ is simply
$$HH'=\left(\begin{array}{cc} t^{k_1+1}I_{n_1} & t^{k_1}\ti{\Pi}+P \\ 0 & \ov{H} \end{array} \right).$$
Otherwise, we split the matrices in $3\times 3$-blocks, as 
\begin{eqnarray*}HH'&=&\left(\begin{array}{ccc} t^{m}I_{m} & 0 & P_1 \\ 
0 & t^{k_1}I_{n_1-m} & P_2 \\
0 & 0 & \ov{H} \end{array} \right) \left(\begin{array}{ccc} tI_{m} & \ti{\Pi}_1 & \ti{\Pi}_2 \\ 
0 & I_{n_1-m} & 0 \\
0 & 0 & I_{n-n_1} \end{array} \right)\\
&=& \left(\begin{array}{ccc} t^{k_1+1}I_{m} & t^{k_1}\ti{\Pi}_1 & t^{k_1}\ti{\Pi}_1+P_1 \\ 
0 & t^{k_1}I_{n_1-m} & P_2 \\
0 & 0 & \ov{H} \end{array} \right)
\end{eqnarray*}
In both cases, we see that the product $HH'$ is strongly $K'$-parabolic as requested.
\end{proof}

\begin{proof}[Proof of lemma~\ref{lem:per}]
Assume for simplicity that $K=\diag(k_1I_{n_1},\ldots,k_sI_{n_s})$ is written by blocks, and that 
$k_1>k_2>\ldots >k_s$. Then there exist $m=k_1-k_s$ matrices $T_1,\ldots,T_m$ of the type 
$T_i=\left(\begin{array}{cc} I_{b_i} & 0 \\ 0 & 0_{n-b_i} \end{array} \right)$,
where every $b_i$ is equal to some $n_1+\cdots+n_{t_i}$ for some decreasing sequence $t_i$, 
such that $K=T_1+\cdots+T_m$. Secondly, assume that all left-upper square blocks of $H_0$ of sizes 
$b_i$ are invertible. Letting $H=H_1$, according to lemma \ref{iter:step}, 
there exists a sequence of monopole matrices
$\Pi_i=\left(\begin{array}{cc} I_{b_i} & t^{-1}\ti{\Pi}_i \\ 0 &
  I_{n-{b_i}} \end{array} \right)$ with a constant matrix $\ti{\Pi}_i$,
and a sequence of lattice gauge transforms $H_i\in \Go$ such that
\begin{equation}\label{1}
H_{i+1}=t^{-T_i}H_it^{T_i}\Pi_i.
\end{equation}
Let $\ov{H}_i=t^{T_i}\Pi_i=\left(\begin{array}{cc} tI_{b_i} & \ti{\Pi}_i \\ 0 &
  I_{n-{b_i}} \end{array} \right)$. It follows from lemma~\ref{max:parabolic} 
that $\ov{H}=\ov{H}_1\cdots\ov{H}_m$ is strongly $K$-parabolic.
It follows then, as a remarkable consequence, that the diagonal matrix $t^K$ can be both factored
from the matrix $\ov{H}$ both on the left as $\ov{H}=t^K\Pi$ with a
monopole matrix $\Pi$, and simultaneously from the right as
$\ov{H}=Pt^K$ with a lattice gauge $P\in\Go$. Since
  $t^KH_{d+1}=H\ov{H}$ holds, we get on the one hand that $t^{-K}H^{-1}t^KH_{m+1}=\Pi\in\G(\C[t^{-1}])$ 
as required for the permutation lemma. However, and this was not stated in~\cite{Bo1} or \cite{IlYa},
we also have the following relation $t^K\Pi=Pt^K$, which yields the second claim.
\end{proof}

\begin{rem}\label{Pi:para}
It results from the previous proof that the monopole gauge $\Pi$ is block-upper-triangular according to 
$\k$, and that its block matrices $\Pi_{ij}$ satisfy $$k_j-k_i \lsl v(\Pi_{ij})\lsl \deg \Pi_{ij} \lsl 0.$$ 
\end{rem}

As stated in~\cite{IlYa}, one can assume that $\sg =\idd$ if all
leading principal minors of $P$ are holomorphically invertible
(which can always be ensured by a permutation of the columns of $P$).

Geometrically, the picture obtained is very evocative. 
$$
\xymatrix{
\la \ar[r]^-{H} \ar[d]_-{t^{T_1}} & 
\la \ar[r]^-{t^{T_1}} \ar[d]_-{\ov{H}_1} & 
M_1 \ar[dl]_-{\Pi_1} \ar[r]^-{t^{T_2}}  & 
M_2 \cdots &
M_{m-1} \ar[r]^-{t^{T_m}} & 
Y\subset M \\ 
\tM_1 \ar[d]_-{t^{T_2}} \ar[r]^-{\ti{H}_1} & 
\tM_1 \ar[d]_-{\ov{H}_2} \ar[r]^-{t^{T_2}} & 
M^2_2 \ar[dl]_-{\Pi_2}\ar[r]^-{t^{T_3}} &  
\cdots \cdots \ar[r]^-{t^{T_m}} & 
Y_2\subset M^2_m & \\ 
\tM_{2} \ar[d]_-{t^{T_3}} \ar[r]^-{\ti{H}_{2}} & 
\tM_{2} \ar[d]_-{\ov{H}_{3}} \ar[r]^-{t^{T_3}} & 
M^3_3 \ar[dl]_-{\Pi_3} \cdots \cdots \ar[r]^-{t^{T_m}}& 
Y_{3}\subset M^{3}_m &  
& \\ 
\vdots & \vdots & &  & &  \\
\tM_{m-1} \ar[d]_-{t^{T_m}} \ar[r]^-{\ti{H}_{m-1}} & 
\tM_{m-1} \ar[d]_-{\ov{H}_{m}} \ar[r]^-{t^{T_m}} & 
Y_{m-1}\subset M^{m-1}_m \ar[dl]_-{\Pi_m} &   
&   
&  \\ 
\tM_m \ar[r]^-{\ti{H}_m} & 
\tilde{Y}\subset \tM_m & 
&  
&   
&
}
$$
The first row corresponds to
a geodesic $\Gamma=(\la,M_1,\ldots,M)$ from $\la$ to a given Birkhoff-Grothendieck trivialisation $M$ . 
This path $\Gamma$ is included in an apartment $[\Psi]$, namely the one spanned by a Birkhoff-Grothendieck
basis $(e)$ of $\la$ corresponding to the trivialisation $M$. By definition, the apartment $[\Psi]$
goes through the global form $Y$ of $M$. The
gauge $H^{-1}$ does not map the geodesic $\Gamma$ onto anything special. However, if we call
$[\Phi]=H^{-1}([\Psi])$ the image of the apartment spanned by $(e)$, the permutation lemma
tells us how to construct a geodesic $\Gamma'$ in $[\Phi]$ whose end point is also a Birkhoff-Grothendieck
trivialisation of $\la$. Lemma~\ref{iter:step} gives the step-by-step modification of the 
geodesic $\Gamma$. Row $i$ of the diagram corresponds indeed to a partial geodesic 
$\Gamma_i=(\tM_i,\ldots,M^i_m)$ whose end-point is a Birkhoff-Grothendieck trivialisation of
the $i$-th element $M_i$ of the geodesic $\Gamma'=(\la,\tM_1,\ldots,\tM_m)$. Even if the end-point 
$\tM_m$ is a Birkhoff-Grothendieck trivialisation of $\la$, note that the apartment $[\Phi]$ does not
contain the global form $\ti{Y}$ of $\tM_m$, and that we still need the gauge transform $\ti{H}_m$
to obtain it.

Since a permutation leaves a frame unchanged, we can deduce the
following result.

\begin{thm}
\label{per}
Let $\E$ be a holomorphic vector
bundle over $X$, and let $\la=\E_x\in \ra$ be its stalk at $x\in X$.
For any apartment $[\Phi]$ in the Bruhat-Tits building $\B$ at $x$ such
that $[\la]\in [\Phi]$, there exists a Birkhoff-Grothendieck
trivialisation of $\la$ in $[\Phi]$.
\end{thm}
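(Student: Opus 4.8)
The plan is to reduce Theorem~\ref{per} to the permutation lemma (Lemma~\ref{lem:per}) by translating the statement into matrix language via a choice of frame for the apartment $[\Phi]$. First I would fix any lattice $M_0\in\ra^0$ that is a Birkhoff-Grothendieck trivialisation of $\la$; such an $M_0$ exists by the (possibly weaker) consequence of Birkhoff-Grothendieck recalled at the beginning of section~\ref{BG:triv}, together with Proposition~\ref{GS} and the geodesic algorithm of section~\ref{BG:algo}. Let $Y=\Gamma(X,\F)\subset M_0$ be its global form, and let $(\sg)$ be a $\C$-basis of $Y$ which is simultaneously a Smith basis of $M_0$ for $\la$, so that $(e)=z^{-\k}(\sg)$ is a basis of $\la$ spanning a Birkhoff-Grothendieck apartment $[\Psi]$, where $\k=\k_{M_0}(\la)$ up to the usual normalisation.

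Next I would compare the two apartments through $[\la]$. Pick a $\la$-basis $(f)$ of the given apartment $[\Phi]$; since $[\la]\in[\Phi]$ we may take $(f)$ to be an actual $\h$-basis of $\la$. Let $P\in\G(\C[[t]])$ be the lattice gauge expressing $(e)$ in terms of $(f)$, i.e. the change-of-basis matrix between the two $\h$-bases of $\la$. Applying Lemma~\ref{lem:per} to this $P$ and the sequence $\k$ produces a permutation $\sg\in S_n$ and lattice gauges $\tP\in\G(\C[[t]])$, $Q\in\Go$ with $\Pi=t^{-\k}P^{-1}t^{\k_\sg}\tP\in\G(\C[t^{-1}])$ and $t^{\k}\Pi=Qt^{\k}$. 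The point, as stressed in the diagram following the proof of the permutation lemma, is that $\Pi$ is a monopole gauge relative to the transport $t_y$ of section~\ref{trans:BG}: choosing $t$ to be a meromorphic coordinate with divisor $x-y$, the relation $\Pi\in\G(\C[t^{-1}])$ says exactly that the lattice $\widetilde M:=z^{\k}\widetilde P^{-1}(\text{basis of }[\Phi])$ (concretely, the bundle obtained by gluing along the cocycle $t^{\k}$ using the basis coming from $(f)$) is again a trivialising lattice.

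The key observation is that the basis $(f_\sg)$ obtained by permuting $(f)$, followed by multiplication by $t^{\k_\sg}$ and the gauge $\tP$, still spans the apartment $[\Phi]$: permuting a $\la$-basis and rescaling each vector by powers of $t$ does not change the frame it spans (this is precisely the remark ``a permutation leaves a frame unchanged'' preceding the theorem), and $\tP\in\G(\C[[t]])$ merely changes the $\h$-basis of whichever lattice we land on without leaving $[\Phi]$. Hence the trivialising lattice $\widetilde M$ produced above lies in $[\Phi]$, and by construction it is a Smith/Birkhoff-Grothendieck trivialisation of $\la$: the second part of the permutation lemma, $t^{\k}\Pi=Qt^{\k}$ with $Q\in\Go$, is exactly the statement that $\widetilde M$ is a \emph{Birkhoff-Grothendieck} trivialisation (a Smith basis of $\widetilde M$ for $\la$ is simultaneously a $\C$-basis of the global form of $\widetilde M$), not merely a trivialisation. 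This is where the ``new'' second item of Lemma~\ref{lem:per} is essential.

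The main obstacle, and the step requiring care, is bookkeeping the normalisations so that $[\widetilde M]$ genuinely belongs to $[\Phi]$ rather than just to some apartment isomorphic to it: one has to check that the sequence of operations (permute the frame vectors of $[\Phi]$, rescale by $t^{\k_\sg}$, apply $\tP$) produces an $\h$-basis of a lattice in the \emph{same} apartment $[\Phi]$, using that $[\Phi]$, being an apartment, is stable under both coordinate-wise rescaling and reindexing of a fixed frame. Granting this, the type of the bundle $\E^{\widetilde M}$ relative to $\E$ is read off from $\k_\sg$ by Proposition~\ref{del=type}, and the theorem follows. I would close by remarking, as the text does, that the geometric content is: the permutation lemma transports a Birkhoff-Grothendieck trivialisation from the canonical apartment $[\Psi]$ to the arbitrary apartment $[\Phi]$ through $[\la]$.
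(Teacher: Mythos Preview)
Your approach is essentially the paper's own: start from a known Birkhoff--Grothendieck basis $(e)$ of $\la$, compare it via a lattice gauge $P\in\Go$ to a $\la$-basis of the given apartment $[\Phi]$, and apply the permutation lemma so that the resulting monopole $\Pi$ carries the global form $Y_M$ to the global form of a new trivialising lattice $\tM$ lying in $[\Phi]$; the second clause $t^{\k}\Pi=Qt^{\k}$ then upgrades ``trivialising'' to ``Birkhoff--Grothendieck''. The only cosmetic difference is that the paper first permutes the $[\Phi]$-basis so that $P$ has invertible leading principal minors, hence $\sg=\mathrm{id}$, whereas you keep the permutation explicit and absorb it into the frame; both are fine since a permutation leaves the frame $\Phi$ unchanged.

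One phrasing to tighten: you write that ``$(f_\sg)$ permuted, rescaled by $t^{\k_\sg}$, and then acted on by $\tP$, still spans the apartment $[\Phi]$''. That is not quite right. The lattice $\tM$ spanned by $t^{\k_\sg}(f_\sg)$ does lie in $[\Phi]$, and that is all you need; but the further gauge $\tP\in\Go$ is a generic basis change inside $\tM$ and the resulting basis (which spans $Y_{\tM}$) will \emph{not} in general lie in the frame $\Phi$. The correct logic is: membership of $\tM$ in $[\Phi]$ is secured \emph{before} applying $\tP$; the role of $\tP$ is only to exhibit the global form inside $\tM$, and the role of $Q$ (via $t^{\k}\Pi=Qt^{\k}$) is to produce a Smith basis of $\la$ for $\tM$ whose $t^{\k}$-image lands in $Y_{\tM}$, which is exactly Lemma~\ref{BG:del}. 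With that correction your argument matches the paper's.
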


\begin{proof}
Let $(e)$ be a Birkhoff-Grothendieck basis of $\la$, and $M$ be a 
Birkhoff-Grothendieck trivialisation of $\la$. 
Let $(\vep)$ be a basis of the lattice $\la$ which spans
the apartment $[\Phi]$. Since $[\Phi]$ is invariant under $S_n$, we can
assume that the matrix $P\in \Go$ of the basis change from $(\vep)$ to $(e)$ has
invertible principal leading minors. According to the permutation lemma, 
there exists a matrix $\tP\in \Go$
such that $\Pi=z^{-K}P^{-1}z^{K}\tP\in \G(\C[z^{-1}])$.
The gauge $\Pi$ sends the basis of global sections $(\sg)=(z^{K}e)$ of
the Birkhoff-Grothendieck trivialisation of $\E$, given
at $x$ by $M$, into a basis $(\te)$ of $\tM$. Since $\Pi$ is a monopole, the
basis $(\te)$ is also a global basis of sections, but spans another
trivialising bundle, namely $\F=\E^{\tM}$. 
Therefore the arbitrary apartment $[\Phi]$ spanned by $(\vep)$ indeed contains a
trivial bundle. Now the matrix
$\ov{H}=z^{K}\Pi$ admits a right factorisation $\ov{H}=Qz^{K}$. As a
consequence, if we let $(\ti{\vep})$ be the basis of $\la$ obtained from
$(e)$ by the matrix $Q$, then $z^{K}(\ti{\vep})$ is also a basis of
$Y_{\tM}$. The following scheme sums up the 
situation.
$$
\xymatrix{\relax
 \la:(\ti{\vep}) \ar@/^1pc/[rrd]^-{z^{K}}& & \\
\la:(e) \ar[u]^{Q}\ar[r]^-{z^{K}} & Y_M\subset M \ar[r]^-{\Pi}
& Y_{\tM}\subset \tM \\
\la:(\vep) \ar[u]^-{P} \ar[r]^-{z^{K}} & \tM
\ar[ur]^-{\tP} & 
}
$$
Therefore the lattice $\tM$ is also a Birkhoff-Grothendieck
trivialisation of $\la$.
\end{proof}

\begin{cor}
\label{cor:formChange}
Let $M$ be a Birkhoff-Grothendieck trivialisation of 
the lattice $\la=\E_x$, and $K=(k_1,\ldots,k_n)$ the type of the
corresponding bundle $\E$. For any form $Y$ in $\la$, the lattice
$\tM=\Psi_Y(\F_{\la}(M),K)$ is also a Birkhoff-Grothendieck trivialisation
of $\la$.
\end{cor}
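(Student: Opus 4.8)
The plan is to recognise $\tM$ as the Birkhoff--Grothendieck trivialisation of $\la$ that Theorem~\ref{per} produces inside an appropriate apartment, the identification being forced by a rigidity property of relative flags inside one apartment.

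I would begin by fixing a local coordinate $z$ and a Birkhoff--Grothendieck basis $(e)$ of $\la$ adapted to $M$: writing $K=(k_1\le\dots\le k_n)$ for the elementary divisors of $M$ in $\la$ in increasing order (this sequence is the type of $\E$ reversed, by Proposition~\ref{del=type}), the tuple $(z^{k_1}e_1,\dots,z^{k_n}e_n)$ is a $\C$-basis of the global form $Y_M$ and $(\ov e)$ respects $\F_\la(M)$. The essential structural input comes from Lemma~\ref{localHNF}: the projected Harder--Narasimhan flag $G:=\pi^{\E}_x(\hn(\E)_x)$ in $E=\la/\id_x\la$ is diagonal with respect to the basis $(\ov e)$, while $\F_\la(M)$ is a flag which is diagonal with respect to $(\ov e)$ and transversal to $G$. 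Now a flag diagonal with respect to a fixed basis and transversal to a fixed diagonal flag is unique once its length is fixed -- it is the ``reversed'' diagonal flag -- so $\F_\la(M)$ is the only flag diagonal for $(\ov e)$, of length equal to the number of distinct entries of $K$, that is transversal to $G$.

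Next, let $(e_Y)$ be the $Y$-basis of $(e)$ (Lemma~\ref{flag:lattice}); since $(\ov{e_Y})=(\ov e)$, the basis $(e_Y)$ respects $\F_\la(M)$, so Lemma~\ref{flag:lattice}~i) gives $\tM=\Psi_Y(\F_\la(M),K)=\bigoplus_{i=1}^n z^{k_i}(e_Y)_i$. In particular $\la$ and $\tM$ both lie in the apartment $[\Phi]$ spanned by $(e_Y)$, and by Theorem~\ref{per} there is a Birkhoff--Grothendieck trivialisation $N$ of $\la$ inside $[\Phi]$. Write $N=\bigoplus_i z^{c_i}(e_Y)_i$; as $N$ is a Birkhoff--Grothendieck trivialisation of $\E$, its elementary divisors $(c_i)$ in $\la$ form, as a multiset, the type of $\E$ (Proposition~\ref{del=type}), and its relative flag $\F_\la(N)$ is diagonal for $(\ov{e_Y})=(\ov e)$ and transversal to $G$ (Lemma~\ref{localHNF} once more). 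By the uniqueness of the preceding paragraph, $\F_\la(N)=\F_\la(M)$; therefore $(c_i)$, being constant on the blocks of $\F_\la(M)$ and having the same multiset as $K$, must coincide with $K$. Hence $N=\bigoplus_i z^{k_i}(e_Y)_i=\tM$, so $\tM$ is a Birkhoff--Grothendieck trivialisation of $\la$.

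The one delicate point is the linear-algebra rigidity in the second paragraph -- that any Birkhoff--Grothendieck trivialisation lying in $[\Phi]$ is compelled to have relative flag exactly $\F_\la(M)$, because that flag is determined inside the apartment by being the transversal of the (apartment-diagonal) Harder--Narasimhan flag; granting this, Theorem~\ref{per} supplies the existence of $N$ and Lemma~\ref{flag:lattice} pins down the lattice.
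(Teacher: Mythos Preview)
Your argument is correct, but it takes a genuinely different route from the paper's.

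The paper's proof is a two-line observation: since the gauge from a Smith basis $(e)$ of $\la$ for $M$ to its $Y$-basis $(e_Y)$ has the form $I+zU$ (Lemma~\ref{flag:lattice}), all its leading principal minors are holomorphically invertible; so the permutation lemma applies with $\sigma=\mathrm{id}$, and the proof of Theorem~\ref{per} then produces as Birkhoff--Grothendieck trivialisation precisely the lattice $z^K(e_Y)=\tM$ itself, with no further identification needed.

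You instead invoke Theorem~\ref{per} only as a black box giving \emph{some} Birkhoff--Grothendieck trivialisation $N$ in the apartment $[\Phi]$ spanned by $(e_Y)$, and then pin it down via a rigidity argument: both $G=\pi_x^{\E}(\hn(\E)_x)$ and $F_\la(N)$ are diagonal for $(\ov e_Y)=(\ov e)$, $F_\la(N)$ is transversal to $G$ (as holds for any Birkhoff--Grothendieck trivialisation, from the proof of Lemma~\ref{localHNF}~\ref{localHNFiii}), and the unique diagonal flag of the correct length transversal to a fixed diagonal flag is the ``reversed'' one, forcing $F_\la(N)=F_\la(M)$; since the multiset of elementary divisors of $N$ equals $K$, this forces $c_i=k_i$ and hence $N=\tM$. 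This is entirely sound. What your route buys is that it never opens the permutation lemma to examine the $\sigma=\mathrm{id}$ case, using instead a purely geometric characterisation of the relative flag of a Birkhoff--Grothendieck trivialisation inside a common apartment; the price is the extra rigidity step and the appeal to Lemma~\ref{localHNF}, which the paper's one-line computation avoids.
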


\begin{proof}
Let $(e)$ be a Smith basis of $\la$ for $M$. 
The lattice $\tM$ is spanned by $z^K(e_Y)$ where $(e_Y)$ is the 
$Y$-basis of $(e)$. The only thing to check is that the gauge from $(e)$ to $(e_Y)$ has
invertible principal leading minors, but this is obvious since 
the gauge is tangent to $I$.
\end{proof}

The permutation lemma is in fact a sort of converse to the
Birkhoff-Grothendieck theorem. Indeed, this theorem asserts 
that for any lattice $\la$ in the
Bruhat-Tits building at infinity
there exists a trivialising lattice $\ti{M}$ such that there is a
Smith basis for $\la$ sitting inside
the global form $Y_{\ti{M}}$. The problem then amounts to, given a
lattice gauge $P$ and a diagonal $K$,
find $Q$ and $\ti{K}$ such that $z^{-\ti{K}}Qz^{K}P \in \G(\C[z])$,
whereas in the permutation lemma, the
input data would be the matrix $Q$ and the diagonal $\ti{K}$.
Schematically, the picture would be like this
$$
\xymatrix{
\la \ar[r]^{z^{K}} \ar[d]_-{Q^{-1}} & M \ar[r]^-{P} & Y_M \subset M  \\
\la \ar[r]^-{z^{\tilde{K}}} & \tilde{M}\supset Y_{\tM} \ar[ur]_-{\Pi} &
}
$$

%%%%%%%%%%%%%%
\section{Local Meromorphic Connections}
\label{lmc}
%%%%%%%%%%%%%%

Let $D=\mathrm{Der}_{\C}(K)$ be the $K$-vector space of dimension 1 of
$\C$-derivations of $K$ and  $\om=\om^1_{\C}(K)$ the dual composed
of differentials of $K$. The valuation $v$ extends naturally to these
spaces by the formul{\ae} $v(\vartheta)=v(f)$ and $v(\omega)=v(g)$
if $\vartheta = f \ddz{}$ and $\omega=g \: dz$ for
any uniformising parameter $z$ of $K$. The space $\om$ is naturally
filtered by the rank 1 free $\h$-modules $\om(k)=\{\omega \in \om \tq
v(\omega) \gsl -k \}$.

Let $V$ be a $K$-vector space  of finite dimension $n$ and let
$\om(V)=V\otimes_{K}\om^1_{\C}(K)$. We fix a {\em meromorphic
connection} $\na$ on $V$. This is an additive map $\appto{\na
}{V}{\om(V)}$ satisfying the Leibniz rule $$\na(fv)=v\otimes df+f\na
v\text{ for all }f \in K\text{ and all }v \in V.$$
For any basis $(e)=(e_1,\ldots,e_n)$ of $V$, the {\em matrix $\mat(\na,(e))$
of the connection $\na$ in the basis $(e)$} is the
matrix $A=(A_{ij})\in \m(\om)$ such that
$$\na e_{j}=-\sum^n_{i=1} e_i\otimes A_{ij}\text{ for all }j~=~1,
\ldots, n.$$  If the matrix $P=\mat(\idd_V,(\varepsilon),(e))\in \G(K)$ is
the basis change from $(e)$ to any other basis $(\varepsilon)$, then the matrix of $\na$ in
$(\vep)$ is given by the {\it gauge transform} of $A$
\begin{equation} \label{jauge} A_{[P]}=P^{-1}AP-P^{-1}dP.
\end{equation}
For any derivation $\tau\in \de$, the contraction of $\na$ with $\tau$
induces a differential operator $\nat$ on $V$. The connection $\na$ 
is {\em regular} whenever the set of {\em logarithmic}
lattices $$\ra_{\log}= \{ \la \in \ra \tq \na(\la)\subset
\la\otimes_{\h}\om(1)\}$$ is non-empty. For any logarithmic lattice 
$\la\in\ra_{\log}$, the connection $\na$ induces a well-defined 
residue endomorphism $\r_{\la}\na\in \End_{\C}(\la/\id\la)$.
Note that, since the set $\ra_{\log}$ is closed under homothety and 
module sums (\cite{Cor6}, lemma 2.5), it induces a {\em geodesically convex} subset of the Bruhat-Tits building: 
if $L,L'\in\ra_{\log}$, then $\Gamma(L,L')\subset \ra_{\log}$.

%%%%%%%%%%%%%%
\subsection{The Deligne Lattice}
\label{deligne}
%%%%%%%%%%%%%%

As is well known, the choice of a matrix logarithm corresponds to
fixing a special lattice
in the space $V$. More precisely, let $V^{\na}\subset \tens{V}{K}{H}$
be the $\C$-vector space of
horizontal sections on any Picard-Vessiot extension $H$ of $K$. Let
$g=g_s g_u \in \End(V^{\na})$ be the multiplicative Jordan decomposition of the
corresponding local monodromy map. Then the logarithm of the unipotent
part $g_u$ is canonically defined (by the Taylor expansion formula for $\log(1+x)$),
but there are several ways to define the logarithm of the semi-simple
part $g_s$. Namely, one must fix a branch of the complex logarithm for every {\em
distinct} eigenvalue of $g_s$.

A classical result (variously attributed to Deligne, Manin..., see~\cite{Sa}) says
that this choice uniquely defines a lattice in $V$. In Deligne's terms, for any
section $\sg$ of $\C\longrightarrow \C/\Z$, there is a unique
logarithmic lattice $\Delta_{\sg}$
such that the eigenvalues of the residue map $\r_{\Delta_{\sg}}\na$
are in the image $\im \sg$ of $\sg$. As a habit, one usually takes $\re(\im \sigma) \subset [0,1[$. 
In fact, such a habit is not as arbitrary as it seems.

\begin{prop}
\label{Deligne:hol}
Assume that the connection $\na$ admits an {\em apparent singularity}
({\it i.~e.} the monodromy
map is trivial). Then the matrix $\mat(\na,(e))$ is {\em holomorphic}
if and only if
the lattice spanned by $(e)$ is equal to the Deligne lattice $\Delta$
attached to $\re(\im \sigma) \subset [0,1[$.
\end{prop}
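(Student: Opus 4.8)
The plan is to reduce the statement to a uniqueness question for logarithmic lattices with prescribed residue eigenvalues, using the hypothesis that the monodromy is trivial to pin down the residue spectrum.

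\medskip

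First I would set up the dictionary between the matrix $A=\mat(\na,(e))$ and the lattice $\la=L(e)$. Saying $A$ is holomorphic means exactly that $\la$ is logarithmic for $\na$ (since a holomorphic connection matrix has at worst a simple pole after multiplying by the differential $dz/z$ in the natural trivialisation; more precisely $\na(\la)\subset\la\otimes\om(1)$ iff $\mat(\na,(e))$ has entries in $\h\,dz$, i.e.\ is holomorphic in the chosen coordinate). So the ``if'' direction is immediate from the definition of the Deligne lattice $\Delta$ as a logarithmic lattice: $\Delta=L(e)$ for some basis $(e)$ in which $A$ is holomorphic. The real content is the ``only if'' direction: any logarithmic lattice for a connection with trivial monodromy must coincide with the Deligne lattice $\Delta$ normalised by $\re(\im\sigma)\subset[0,1[$.

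\medskip

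The key step is to identify the residue spectrum forced by trivial monodromy. If the monodromy map $g$ is trivial, then $g=g_s g_u$ with $g_s=\idd$ and $g_u=\idd$, so the ``logarithm'' is $0$ and the Deligne lattice $\Delta$ attached to any section $\sigma$ with $0\in\im\sigma$ — in particular the one with $\re(\im\sigma)\subset[0,1[$, which forces the chosen branch value at the eigenvalue $1$ of $g_s$ to be $0$ — has residue $\r_\Delta\na$ with \emph{all eigenvalues equal to $0$}. Now let $\la$ be any logarithmic lattice, i.e.\ $\mat(\na,(e))$ holomorphic for a basis $(e)$ of $\la$. The residue $R=\r_\la\na\in\End_\C(\la/\id\la)$ has eigenvalues that are integer shifts of $0$ (this is the standard fact: two logarithmic lattices for the same regular connection have residues whose eigenvalues differ by integers, the integers being the elementary divisors along the geodesic connecting them in the building — here one can invoke the geodesic convexity of $\ra_{\log}$ noted just before Section~\ref{deligne}). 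So the eigenvalues of $R$ are integers. If $\la\neq\Delta$, then $\la$ and $\Delta$ are distinct vertices of the Bruhat-Tits building, both in $\ra_{\log}$; moving one step along the geodesic $\Gamma([\la],[\Delta])$ changes some residue eigenvalue by $\pm1$ in a controlled way, and one shows that a logarithmic lattice strictly between — or beyond — $\Delta$ in the non-resonant chamber cannot have all eigenvalues non-negative and $<1$ simultaneously. More directly: the Deligne lattice is characterised as the \emph{unique} logarithmic lattice whose residue eigenvalues lie in $\im\sigma$; for trivial monodromy $\im\sigma$ meets $\Z$ only in $\{0\}$, so $\Delta$ is the unique logarithmic lattice with $\r_\la\na$ nilpotent (eigenvalue $0$ only), and since trivial monodromy also forces $g_u=\idd$, in fact $\r_\Delta\na=0$.

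\medskip

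I expect the main obstacle to be making precise the comparison of residues of two logarithmic lattices, i.e.\ the statement that if $\la,\Delta\in\ra_{\log}$ then the eigenvalues of $\r_\la\na$ and $\r_\Delta\na$ differ by the elementary divisors $\k_\Delta(\la)$ (up to the action of monodromy on generalised eigenspaces). This is the classical Deligne/Manin uniqueness theorem already cited in the excerpt, so I would simply invoke it: the section $\sigma$ with $\re(\im\sigma)\subset[0,1[$ singles out the unique logarithmic lattice whose residue eigenvalues land in $[0,1[+i\R$; under trivial monodromy the only such eigenvalue available is $0$, the residue is then automatically nilpotent, and the triviality of $g_u$ upgrades nilpotent to zero. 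Combining this with the easy ``if'' direction gives the equivalence.
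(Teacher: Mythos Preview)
There is a genuine gap, and it stems from a misidentification at the very start. You write that ``$A$ is holomorphic means exactly that $\la$ is logarithmic for $\na$'', and even spell out the false equivalence ``$\na(\la)\subset\la\otimes\om(1)$ iff $\mat(\na,(e))$ has entries in $\h\,dz$''. This is not right: logarithmic means the matrix lies in $\tfrac{1}{z}\h\,dz$, i.e.\ has at most a simple pole; holomorphic is strictly stronger. Because of this conflation, your reformulation of the ``only if'' direction becomes the claim that \emph{any logarithmic lattice} for a trivial-monodromy connection equals $\Delta$, which is simply false (for $\na=d$, every lattice $z^{k_1}e_1\oplus\cdots\oplus z^{k_n}e_n$ is logarithmic with residue $-\diag(k_1,\ldots,k_n)$). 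Your subsequent appeal to Deligne--Manin uniqueness cannot rescue this, because you only establish that the residue eigenvalues of an arbitrary logarithmic $\la$ are integers, never that they lie in $[0,1[$.

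The missing observation is the one-line fix: if $\mat(\na,(e))$ is holomorphic then the residue $\r_\la\na$ is \emph{zero}, hence its eigenvalues all equal $0\in[0,1[$, and then Deligne--Manin uniqueness does give $\la=\Delta$. Had you inserted this, your route would be a legitimate alternative to the paper's proof. The paper instead argues by direct computation: in a basis of $\Delta$ the matrix is $0$ (trivial monodromy forces the normalised logarithm, hence the residue, to vanish), so in any other basis of $\Delta$ the matrix is $P^{-1}dP$ with $P\in\Go$, which is holomorphic; for a different lattice $M$, passing to a Smith basis with elementary divisors $K$ produces the matrix $z^{-K}Az^K - K\tfrac{dz}{z}$, and the non-trivial diagonal term $-K\tfrac{dz}{z}$ visibly has a pole. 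This is more elementary and self-contained than invoking the Deligne--Manin classification.
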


\begin{proof}
Since the monodromy map is trivial, its normalised logarithm with respect to $\Delta$
is 0. Hence, there is a basis of $\Delta$ where  the connection 
has matrix 0. In any other basis $(e)$ of $\Delta$, 
the connection has matrix $A=P^{-1}dP$ with $P\in \Go$,
which is holomorphic.
Let $M$ be another lattice, and let $(e)$ be a Smith basis of $\Delta$
for $M$. Then the matrix in a basis of $M$ is given by the gauge equation
$$\ti{A}=z^{-K}Az^K-z^{-K}d(z^K)=(A_{ij}z^{k_j-k_i})-K\frac{dz}{z}.$$
The non-zero diagonal terms of
the matrix $K$ of elementary divisors of $M$ give necessarily rise to
a pole of order 1 in $\ti{A}$.
Therefore, $\Delta$ is the only lattice where the connection has a
holomorphic matrix.
\end{proof}
As a result, we will call $\Delta$ {\em the} Deligne lattice of $V$.

%%%%%%%%%%%%
\subsubsection{Birkhoff Forms}
\label{BF}
%%%%%%%%%%%%

According to a very classical result (see {\em e. g.}~\cite{Ga}, p. 150) 
if $$\Omega=\mat(\na,(e))=\sum_{k\gsl 0}A_kz^k\frac{dz}{z}$$ is the series expansion in $z$ of
the matrix of $\na$ in a basis $(e)$ of $\Delta$, the gauge $P=\sum_{k\gsl 0}P_kz^k\in \Go$ defined recursively by
\begin{equation}\label{G}
\left\{\begin{array}{l}
          P_0=I\\
          P_k=\Phi_{A_0,A_0-kI}^{-1}(Q_k)\mbox{ where }Q_k=\sum_{i=1}^kA_iP_{k-i}
         \end{array}
\right.
\end{equation}
transforms $\Omega$ into $A_0dz/z$. Here we put $\Phi_{U,V}(X)=XU-VX$.
Recall that the map $\Phi_{U,V}$ is an automorphism of $\gl(\C)$ when the 
spectra of $U$ and $V$ are disjoint. The gauge $P$ thus defined is uniquely determined; 
moreover, the set of bases where $\na$ has matrix $L\frac{dz}{z}$ 
where $L\in\m(\C)$ is a constant matrix spans a form $\Upsilon_z$ of $\Delta$, 
that we call the {\em Birkhoff form} of the Deligne lattice $\Delta$. The gauge transform
$P$ sends in fact the basis $(e)$ to its $\Upsilon_z$-basis, that we denote here for simplicity $(e_z)$.

As it results from the proof of proposition~\ref{Deligne:hol}, when the singularity is apparent, the Birkhoff form is
uniquely defined. Otherwise, however, the form $\Upsilon_z$ depends on the choice of the local coordinate $z$. Two Birkhoff forms 
are nevertheless canonically isomorphic.
\begin{lemm}\label{B:form}
Let $z,t$ be two local coordinates, and let $\alpha \in \h^*$ such that $z=\alpha t$.
Let $P_z$ and $P_t$ be the gauge transforms that send $(e)$ to $(e_z)$ and $(e_t)$ respectively.
There is a {\em unique} gauge transform $\ti{P}$ that sends $(e_z)$ to $(e_t)$.
\end{lemm}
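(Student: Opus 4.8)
The statement to prove is Lemma~\ref{B:form}: given two local coordinates $z,t$ with $z=\alpha t$, $\alpha\in\h^*$, there is a \emph{unique} gauge transform $\ti{P}$ sending $(e_z)$ to $(e_t)$. The plan is to exhibit $\ti{P}$ concretely as $\ti{P}=P_z^{-1}P_t$ and then argue that it lands in the right group and is uniquely characterised. Since $(e_z)=P_z\cdot(e)$ and $(e_t)=P_t\cdot(e)$ with $P_z,P_t\in\Go$ (both are constructed as in~(\ref{G}), hence invertible power series tangent to $I$), the composite $\ti{P}=P_z^{-1}P_t$ is an element of $\Go$ sending $(e_z)$ to $(e_t)$. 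This settles existence immediately; the content of the lemma is the \emph{uniqueness} and the fact that $\ti{P}$ is really a gauge transform between the two Birkhoff forms, i.e. sends constant connection matrices to constant connection matrices.

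First I would compute the connection matrices in these bases. In the basis $(e_z)$ the connection has matrix $A_0\,dz/z$ with $A_0=\r_{\Delta}\na$ the residue; in the basis $(e_t)$ it has matrix $A_0\,dt/t$ (same residue, since both Birkhoff normalisations produce the residue as the constant term). Using $z=\alpha t$ one has $dz/z = dt/t + d\alpha/\alpha$, and $d\alpha/\alpha$ is a \emph{holomorphic} $1$-form because $\alpha\in\h^*$. So the gauge $\ti{P}$ must satisfy the gauge equation~(\ref{jauge}): $A_0\,dt/t = \ti{P}^{-1}(A_0\,dz/z)\ti{P} - \ti{P}^{-1}d\ti{P}$, i.e.
\[
\ti{P}^{-1}A_0\ti{P}\,\frac{dt}{t} + \ti{P}^{-1}A_0\ti{P}\,\frac{d\alpha}{\alpha} - \ti{P}^{-1}\frac{d\ti{P}}{dt}\,dt = A_0\,\frac{dt}{t}.
\]
The pole part (the $dt/t$ coefficient) forces $\ti{P}^{-1}A_0\ti{P}=A_0$ in the residue field, i.e. $[\ti{P}_0,A_0]=0$ for the constant term $\ti{P}_0$; and the holomorphic part gives a recursion, order by order in $t$, of exactly the same shape as~(\ref{G}), with the operator $\Phi_{A_0,A_0-kI}$ (an automorphism of $\gl(\C)$ for $k\geq 1$ since the spectra of $A_0$ and $A_0-kI$ are disjoint) controlling the $k$-th coefficient $\ti{P}_k$. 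The freedom left is the choice of $\ti{P}_0$ in the centraliser of $A_0$.

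The main obstacle — and the key point of the lemma — is pinning down $\ti{P}_0$ to get \emph{uniqueness}. Here the relevant normalisation is the one already used to define the Birkhoff form: the $\Upsilon_z$-basis $(e_z)$ is singled out among all bases with constant connection matrix by the requirement that the gauge from $(e)$ to $(e_z)$ be tangent to the identity (the recursion~(\ref{G}) has $P_0=I$). Consequently $P_z$ and $P_t$ are both tangent to $I$, hence $\ti{P}=P_z^{-1}P_t$ is tangent to $I$, i.e. $\ti{P}_0=I$. This fixes the constant term and, feeding into the recursion above, determines all $\ti{P}_k$ uniquely; conversely any gauge transform carrying $(e_z)$ to $(e_t)$ must solve the same gauge equation, and the tangency $\ti{P}_0=I$ is forced because both $(e_z)$ and $(e_t)$ project to the same basis $(\bar e)$ in $\Delta/\id\Delta$ (the $Y$-basis construction in Lemma~\ref{flag:lattice} shows the gauge to a form-basis is always of the form $I+zU$). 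I would close by remarking that this $\ti{P}$ is precisely what makes the two Birkhoff forms $\Upsilon_z$ and $\Upsilon_t$ canonically isomorphic: $\varphi_{\Upsilon_t}^{-1}\circ\varphi_{\Upsilon_z}$ on $\Delta/\id\Delta$ is induced by $\ti{P}$, independent of any further choice.
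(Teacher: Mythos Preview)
Your proof is correct and rests on the same key mechanism as the paper's: once the constant term of the gauge is fixed to $I$, the recursion~(\ref{G}) (with $\Phi_{A_0,A_0-kI}$ invertible for all $k\geq 1$ on the Deligne lattice) determines $\ti P$ uniquely.

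The presentation differs slightly. You obtain existence by writing $\ti P=P_z^{-1}P_t$ and then argue uniqueness from the recursion. The paper instead expresses $A_0\,dz/z$ directly as a $t$-series $A_0\bigl(\sum_{i\geq 0}u_it^i\bigr)\,dt/t$ with $u=1+\theta_t\alpha/\alpha$, and applies~(\ref{G}) to this series to construct $\ti P$ from scratch. Because every coefficient of this series is a scalar multiple of $A_0$, the recursion collapses to the simpler form
\[
\ti P_0=I,\qquad \ti P_k=\tfrac{1}{k}\sum_{i=1}^k u_iA_0\ti P_{k-i},
\]
so each $\ti P_k$ is a \emph{polynomial in $A_0$}. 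This extra structural fact (which in particular gives $[\ti P,A_0]=0$ at every order) is what the paper's route buys; your composition argument is more immediate but does not display it.
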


\begin{proof}
One has $\frac{dz}{z}=u\frac{dt}{t}$ with $u=1+\frac{\theta_t \alpha}{\alpha}$ where $\theta_t=t\frac{d}{dt}$. 
Put $u=\sum_{i=0}^{\infty}u_it^i$. Accordingly, the matrix of the connection in $(e_z)$ satisfies
$$\mat(\na,(e_z))=A_0\frac{dz}{z}=A_0\left(\sum_{i=0}^{\infty}u_it^i\right)\frac{dt}{t}.$$
There exists therefore a uniquely defined gauge transform $\ti{P}=\sum_{i=0}^{\infty}\ti{P}_it^i$ 
that transforms the expression $A_0dz/z$ into $A_0dt/t$, as explained in the following scheme.
$$
\xymatrix{
\Omega \ar[r]^-{P_z} \ar[d]_-{P_t} & A_0\frac{dz}{z}=\sum_{i=0}^{\infty}u_iA_0t^i\frac{dt}{t} \ar[dl]_-{\ti{P}} \\
A_0\frac{dt}{t} & 
}
$$
The matrix series $\ti{P}$ is determined recursively by the equations~(\ref{G}) applied to the series 
$\sum_{i=0}^{\infty}A_0u_it^i$. The coefficients $\ti{P}_i$ are even {\em polynomials in $A_0$}, defined by the following
induction rule
\begin{equation*}
\left\{\begin{array}{l}
          \ti{P}_0=I\\
          \ti{P}_k=\frac{1}{k}\sum_{i=1}^ku_iA_0\ti{P}_{k-i}
         \end{array}
\right.
\end{equation*}
\end{proof}

%%%%%%%%%%%%%%
\subsection{Logarithmic Lattices and Stable Flags}
\label{stable}
%%%%%%%%%%%%%%

When two lattices $\la,M$ are adjacent, all the relevant information
on $M$ can be retrieved from the quotient $M/\id\la$. This is also true
in presence of a connection.

\begin{lemm}
\label{adj:log}
Let $\la\in \ra_{\log}$ be a logarithmic lattice. For any adjacent lattice
$M\in [\id\la,\la]$, we have $M\in \ra_{\log}$ if and only if 
$M/\id\la$ is $\r_{\la}\na$-stable.
\end{lemm}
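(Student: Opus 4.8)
The plan is to work entirely inside the quotient space $\ov{\la}=\la/\id\la$, using a Smith basis adapted to the adjacency $\id\la\subset M\subset \la$. Since $M$ is adjacent to $\la$ and strictly between $\id\la$ and $\la$, there is a Smith basis $(e_1,\dots,e_n)$ of $\la$ for which $M$ has the basis $(e_1,\dots,e_r,ze_{r+1},\dots,e_n)$ after reordering — i.e. the elementary divisors of $M$ in $\la$ are $(0,\dots,0,1,\dots,1)$ with $r$ zeros — so that $W:=M/\id\la$ is exactly the span of $\ov{e}_1,\dots,\ov{e}_r$ in $\ov{\la}$. The matrix $T=\diag(0_r,I_{n-r})$ encodes the change of lattice, and the gauge from a basis of $\la$ to a basis of $M$ is $z^T$. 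First I would write the matrix $A=\mat(\na,(e))$ of the connection in the Smith basis; since $\la\in\ra_{\log}$, $A$ is logarithmic, say $A=(A_0+zA_1+\cdots)\,dz/z$ with $A_0$ the residue matrix representing $\r_{\la}\na$ on $\ov{\la}$ in the basis $(\ov{e})$.

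The core computation is the gauge transform formula \eqref{jauge} applied to $P=z^T$: the matrix of $\na$ in the basis $(z^Te)$ of $M$ is
$$
\ti{A}=z^{-T}Az^{T}-z^{-T}d(z^{T})=\bigl(A_{ij}z^{t_j-t_i}\bigr)\,\frac{dz}{z}-T\,\frac{dz}{z}.
$$
Decompose $A_0$ (and each $A_k$) into $2\times 2$ blocks along the partition $r+(n-r)$. The off-diagonal exponents $t_j-t_i$ are $0$ on the diagonal blocks, $+1$ on the upper-right block, $-1$ on the lower-left block. Hence $\ti{A}$ is logarithmic — i.e. $M\in\ra_{\log}$ — if and only if the lower-left block of $A_{ij}z^{t_j-t_i}$, which is $z^{-1}(A_0)_{21}+\cdots$, has no pole, which happens exactly when $(A_0)_{21}=0$. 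But $(A_0)_{21}=0$ is precisely the statement that the subspace $W=\langle\ov{e}_1,\dots,\ov{e}_r\rangle$ is stable under the endomorphism $\r_{\la}\na$ represented by $A_0$. (The term $-T\,dz/z$ and the diagonal blocks are already logarithmic and cause no trouble; the upper-right block acquires a zero, also harmless.) This is the whole content of the equivalence.

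I expect the only subtlety — and the step to state carefully rather than the "main obstacle" — is the reduction to a single adjacency with elementary divisors in $\{0,1\}$, i.e. justifying that the existence of \emph{some} Smith basis of $\la$ for $M$ lets us assume $T=\diag(0_r,I_{n-r})$, and that $W=M/\id\la$ is spanned by the corresponding $\ov{e}_i$; this follows from the section on the relative flag (the case $d=1$) together with the \del\! characterisation. One should also note that $\r_{\la}\na\in\End_{\C}(\ov{\la})$ is well-defined (stated in Section~\ref{lmc}) so that "$W$ is $\r_{\la}\na$-stable" is meaningful independently of the chosen basis. Everything else is the direct block computation above, so the proof is short.
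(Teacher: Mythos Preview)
Your proposal is correct and follows essentially the same approach as the paper: pick a basis of $\la$ whose first $r$ vectors span $W=M/\id\la$, apply the gauge $z^T$ with $T=\diag(0_r,I_{n-r})$, and read off that the transformed matrix has a simple pole exactly when the lower-left block $(A_0)_{21}$ of the residue vanishes, i.e.\ when $W$ is $\r_{\la}\na$-stable. Your write-up is in fact more careful than the paper's, which only spells out the ``if'' direction explicitly and leaves the converse implicit in the same computation.
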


\begin{proof}
In any basis $(e)$ of $\la$ such that the images of the first $m=\dim W$ vectors span 
$W=M/\id\la$, the connection matrix $\Omega=\mat(\na,(e))$ has a residue of the form 
$\left(\begin{array}{cc} A & B \\
        0 & D
       \end{array}
\right)\in \m(\C)$, where $A\in \mathrm{M}_{m}(\C)$. Putting $T=\diag(0_m,I_{n-m})$, 
the basis $(\vep)=z^T(e)$ spans $M$. It is then straightforward that 
the matrix $z^{-T}\Omega z^T-T\dzz$ of $\na$ in $(\vep)$ has a simple pole.
\end{proof}

When the lattices are further apart, this correspondence fails. However,
there is also a complete description of the logarithmic lattices as follows.
Let $\Delta$ be the Deligne lattice, and let $\dt_{\Delta}=\r_{\Delta}\na$ be the
residue $\C$-endomorphism on $D=\Delta/\id\Delta$. Let $\Upsilon$ be the Birkhoff
form of $\Delta$ attached to a uniformising parameter $z$.
Logarithmic lattices can then be characterised as stable flags 
(as already remarked by Sabbah~\cite{Sa}, th. 1.1).

\begin{prop}
\label{bij:lattices}
The set $\ra_{\log}$ of logarithmic lattices is in bijection with
the subset $W_0(\Upsilon)$ of $W(\Upsilon)$ defined
by
$$W_0(\Upsilon)=\{(F,\k)\in W(\Upsilon) \tq F \textrm{ stabilised by }\dt_{\Delta}\}.$$
\end{prop}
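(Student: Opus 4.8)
The plan is to construct the bijection explicitly via the Birkhoff form $\Upsilon$ and the map $\Psi_\Upsilon$ from Lemma~\ref{flag:lattice}, then show it restricts to a bijection between $\ra_{\log}$ and $W_0(\Upsilon)$. Recall that $\Psi_\Upsilon$ is already known to be a bijection between the set $W(\Upsilon)$ of admissible pairs and the sub-building $\B(\Upsilon)$. The first task is to check that $\B(\Upsilon)$ is all of $\B_n$, i.e. that every lattice $M$ has a Smith form contained in $\Upsilon$ — but this should follow from Lemma~\ref{poly:form} together with the fact that $\Upsilon$ is a form of the Deligne lattice $\Delta$ (one transports an arbitrary Smith form of $\Delta$ for $M$ into a Smith form lying in $\B(\Upsilon)$). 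So the content is entirely in identifying which admissible pairs $(F,\k)$ correspond to logarithmic lattices.

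\medskip
\noindent\textbf{Key steps.} First I would fix a basis $(e_z)$ of the Birkhoff form $\Upsilon$ in which the connection matrix is $A_0\,\dfrac{dz}{z}$ where $A_0$ represents $\dt_\Delta=\r_\Delta\na$ (this is the defining property of $\Upsilon$ from section~\ref{BF}). Given an admissible pair $(F,\k)$ with $F$ stabilised by $\dt_\Delta$, choose a basis $(e_z)$ adapted to $F$; then $A_0$ is block-upper-triangular with respect to the flag blocks, and $\k$ being $F$-admissible means $z^{\k}$ has diagonal blocks constant on those same blocks. The lattice $M=\Psi_\Upsilon(F,\k)$ is spanned by $(z^{k_i}e_{z,i})$, and the gauge-transformed connection matrix is
$$
z^{-\k}A_0 z^{\k}\,\frac{dz}{z}-\k\,\frac{dz}{z}
=\bigl((A_0)_{ij}z^{k_j-k_i}\bigr)\frac{dz}{z}-\k\frac{dz}{z}.
$$
Since $(A_0)_{ij}=0$ whenever $k_i>k_j$ (block-triangularity aligned with $F$-admissibility), every entry $(A_0)_{ij}z^{k_j-k_i}$ is holomorphic, so the matrix has at most a simple pole: $M$ is logarithmic. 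Conversely, if $M\in\ra_{\log}$, let $(F,\k)=(\F_\Upsilon(M)\text{-data},\k_\Delta(M))$ be its image under $\Phi_\Delta$ composed with the $\Upsilon$-lifting; I would run the same gauge computation backwards: logarithmicity of $M$ forces $(A_0)_{ij}z^{k_j-k_i}$ to be holomorphic for all $i,j$, hence $(A_0)_{ij}=0$ whenever $k_i>k_j$, which says precisely that $A_0$ — and therefore $\dt_\Delta$ — stabilises the flag $F=F_\Delta(M)$. Thus $(F,\k)\in W_0(\Upsilon)$.

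\medskip
\noindent\textbf{Main obstacle.} The delicate point is not the single gauge computation but ensuring that the Smith basis realizing $M=\Psi_\Upsilon(F,\k)$ can simultaneously be taken inside $\Upsilon$ \emph{and} adapted to the flag $F$, so that the residue matrix $A_0$ is literally block-triangular in that basis. This is exactly what Lemma~\ref{flag:lattice}(ii) and the surjectivity of $\Phi_\Delta$ provide — the $\Upsilon$-basis of a Smith basis of $\Delta$ for $M$ respects $F_\Delta(M)$ — but one must check that passing to the $\Upsilon$-basis (a gauge of the form $I+zU$) does not disturb the residue $A_0$, which holds because such a gauge contributes only $-(I+zU)^{-1}d(I+zU)=O(1)$ terms with no pole and a trivial residue. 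A secondary subtlety is well-definedness: different Smith bases differ by a $\mf{G}_{\k}$-parabolic gauge, and one checks this changes neither the flag $F$ nor the stability condition on $\dt_\Delta$. Once these compatibilities are in place, the bijection $\Psi_\Upsilon$ restricts to the claimed bijection $\ra_{\log}\leftrightarrow W_0(\Upsilon)$, and injectivity and surjectivity are inherited directly from the already-established bijectivity of $\Psi_\Upsilon$ on $W(\Upsilon)$.
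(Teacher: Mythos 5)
Your forward computation (from a $\dt_\Delta$-stable flag $F$ and admissible sequence $\k$ to a logarithmic lattice via the $\Upsilon$-lift) is the same ``simply a matter of computation'' step the paper sketches, and it is correct. But your opening claim that $\B(\Upsilon)=\B_n$ is false, and Lemma~\ref{poly:form} does not yield it: that lemma produces a Smith form $\tilde Y$ for $M$ at finite $z$-distance from $\Upsilon$, not equal to $\Upsilon$, so in general $M\notin\B(\Upsilon)$. A rank-$2$ counterexample: take $\Delta=\h e_1\oplus\h e_2$, $\Upsilon=\C e_1\oplus\C e_2$, and $M=\h(e_1+ze_2)\oplus\h z^2e_2$, which has $\k_\Delta(M)=(0,2)$. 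One checks directly that $M\cap\Upsilon=0$, so no Smith basis vector of $\Delta$ for $M$ with zero elementary divisor can lie in $\Upsilon$, hence $M\notin\B(\Upsilon)$.

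This is a genuine gap in your converse direction. Starting from a logarithmic $M$ and an \emph{arbitrary} Smith basis of $\Delta$ for $M$, the gauge computation does correctly force $(A_0)_{ij}=0$ whenever $k_i>k_j$, so $F_\Delta(M)$ is $\dt_\Delta$-stable and $\Phi_\Delta$ maps $\ra_{\log}$ into $W_0(\Upsilon)$. What you have \emph{not} shown is that $\Psi_\Upsilon\circ\Phi_\Delta=\mathrm{id}$ on $\ra_{\log}$, i.e.\ that $M=\Psi_\Upsilon(F_\Delta(M),\k_\Delta(M))$. The lattice $M_\Upsilon:=\Psi_\Upsilon(F_\Delta(M),\k_\Delta(M))$ has the same relative flag and elementary divisors as $M$, but need not equal $M$ --- they coincide precisely when $M\in\B(\Upsilon)$, and that inclusion is the hard part. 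The paper closes exactly this gap by invoking a classical normal-form result (cited to Babbitt--Varadarajan, Bolibrukh, and Corel's thesis): a lattice $M$ is logarithmic if and only if (i) there exists a basis $(e)$ of $\Upsilon$ with $(z^{\k}e)$ a basis of $M$, and (ii) $z^{-\k}A_0z^{\k}\in\m(\h)$. Part (i) is precisely $\ra_{\log}\subset\B(\Upsilon)$; it is the real content of the proposition and does not follow from the lemmas you cite. Your observation that a gauge of the form $I+zU$ does not disturb the residue is correct, but it sidesteps rather than solves the issue: it shows passing to the $\Upsilon$-basis preserves $A_0$, not that the resulting $\Upsilon$-lattice $M_\Upsilon$ equals the original $M$ when $d(\Delta,M)\geq 2$.
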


\begin{proof}
According to a classical, although not so well known, result (which can be found 
for instance in \cite{BV,Bo1,Cth}), a lattice $\la\in
\ra$ is logarithmic if and only if
\begin{enumerate}[i)]
\item There exists a basis $(e)$ of $\Upsilon$ such that $(z^{K}e)$ is a
basis of $\la$, with $K=\k_{\Delta}(\la)$,\label{slati}
\item $z^{-K}Lz^K \in \m(\h)$, where $L=\mat(\nat,(e))$.\label{slatii}
\end{enumerate}
It results from (\ref{slatii}) that in this case, the matrix $L$ is $K$-parabolic.
Since the flag $\F_{\Delta}(\la)$ induced by $\la$ on
$D=\Delta/\id\Delta$ is spanned by the images of the basis $(e)$ in $D$, 
it is stable under $\dt_{\Delta}$. Conversely, it
is simply a matter of computation to show that any lattice in the $\Upsilon$-fibre of 
a $\dt_{\Delta}$-stable flag of $D$ is logarithmic.
\end{proof}

A difference between our result and Sabbah's is that he only states this result
as an equivalence of categories between the set of stable filtrations of $D$ and the
logarithmic lattices, whereas we give the explicit correspondence based on the 
lifting of $D$ to a Birkhoff form. Although it would seem that the previous result has little value to effectively 
determine all logarithmic lattices, it is always possible to determine them in
finite terms. 

\begin{lemm}\label{poly:latt}
Let $M\in \ra_{\log}$ and let $(F,K)=\Phi_{\Delta}(M)$. 
Let $Y$ be a form of $\Delta$, and let $(e)$ be a basis of $Y$ respecting 
the flag $F$. Fix a coordinate $z$, and 
let $P=I+P_1z+\cdots $ be the gauge from $(e)$ to its $\Upsilon_z$-basis 
$(e_z)$. Then the Laurent
polynomial gauge $Q\in\gl(\C[z,z^{-1}])$ defined by
$$Q=(I+\cdots +P_{d-1}z^{d-1})z^K\mbox{ where }d=d(\Delta,M)$$
sends the basis $(e)$ of $\Delta$ to a basis of $M$.
\end{lemm}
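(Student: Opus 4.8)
The plan is to make the logarithmic lattice $M$ completely explicit via Proposition~\ref{bij:lattices}, reach it from $(e)$ by the full Birkhoff gauge $Pz^K$, and then check that discarding the terms of $P$ of degree $\gsl d$ changes nothing once one multiplies by $z^K$.

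First I would pin down which lattice $M$ is. Since $(e)$ is a $\C$-basis of the form $Y\subset\Delta$, it is in particular an $\h$-basis of the (logarithmic) lattice $\Delta$, so $\mat(\na,(e))$ has a first order pole $\sum_{k\gsl 0}A_kz^k\,dz/z$ with $A_0$ conjugate to $\r_\Delta\na$, whose eigenvalues have real parts in $[0,1[$; hence the recursion~(\ref{G}) applies and produces exactly the gauge $P=I+P_1z+\cdots\in\G(\h)$ from $(e)$ to a $\C$-basis $(e_z)$ of the Birkhoff form $\Upsilon_z$. Because $P\equiv I\pmod{z}$, the bases $(e)$ and $(e_z)$ have the same image in $D=\Delta/\id\Delta$, so $(e_z)$ again respects the flag $F$. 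On the other hand, by Proposition~\ref{bij:lattices} the logarithmic lattice attached to $(F,K)=\Phi_\Delta(M)$ is $\Psi_{\Upsilon_z}(F,K)$, and by Lemma~\ref{flag:lattice} this equals $\bigoplus_{i=1}^n z^{k_i}f_i$ for \emph{any} $\C$-basis $(f)=(f_1,\dots,f_n)$ of $\Upsilon_z$ respecting $F$; taking $(f)=(e_z)$ yields $M=\bigoplus_{i=1}^n z^{k_i}(e_z)_i$. Thus the a priori non-polynomial gauge $Pz^K$ already sends $(e)$ to a basis of $M$, and only the truncation remains to be justified.

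For the truncation I would write $P=\ov{P}+C$ with $\ov{P}=I+P_1z+\cdots+P_{d-1}z^{d-1}$ and $C=\sum_{m\gsl d}P_mz^m$, so that $\ov{P}\in\G(\h)$ (being $\equiv I\pmod{z}$ over the local ring $\h$) and every entry of $C$ has valuation $\gsl d$. Put $Q=\ov{P}z^K$; then $Q$ sends $(e)$ to a basis of the lattice $M'=\bigoplus_{i=1}^n z^{k_i}(\ov{P}e)_i$, which is of the same shape as $M=\bigoplus_{i=1}^n z^{k_i}(e_z)_i$, the $j$-th generators differing exactly by $z^{k_j}(Ce)_j$. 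Written in the basis $(e)$ of $\Delta$, this difference has all coordinates of valuation $\gsl k_j+d\gsl k_{\max}$, where $k_{\max}=\max_i k_i$, because $d=d(\Delta,M)=k_{\max}-\min_i k_i$ by~(\ref{d-i}) and $k_j\gsl\min_i k_i$. Hence $z^{k_j}(Ce)_j\in z^{k_{\max}}\Delta$ for every $j$. Finally $z^{k_{\max}}\Delta$ lies in both $M$ and $M'$: indeed $z^{k_{\max}}(e_z)_i=z^{k_{\max}-k_i}\cdot z^{k_i}(e_z)_i\in M$ since $k_{\max}-k_i\gsl 0$, and the same computation with $\ov{P}e$ in place of $e_z$ gives $z^{k_{\max}}\Delta\subseteq M'$. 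Consequently every generator of $M'$ lies in $M$ and every generator of $M$ lies in $M'$, so $M=M'$, i.e. $Q=\ov{P}z^K$ sends $(e)$ to a basis of $M$.

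The only genuine content is the valuation estimate in the last step: it is precisely the choice of truncation level $d-1=d(\Delta,M)-1$ that forces $k_j+d\gsl k_{\max}$, hence $z^{k_j}(Ce)_j\in z^{k_{\max}}\Delta\subseteq M\cap M'$. Everything else — that the Birkhoff gauge is tangent to the identity, that $\Upsilon_z$-bases respecting a flag compute $\Psi_{\Upsilon_z}$, and that $\Delta$-logarithmic lattices are the $\Upsilon_z$-fibres of $\dt_\Delta$-stable flags — is a direct application of Lemma~\ref{flag:lattice} and Proposition~\ref{bij:lattices}; I would carry out the two steps in the order above.
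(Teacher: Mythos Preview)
Your proof is correct and follows essentially the same route as the paper's. The paper's proof is a one-liner citing Lemma~\ref{poly:form}; what you do in your second paragraph is precisely the content of that lemma, unpacked: the valuation estimate $v(C_{ij})\gsl d$ together with $d=k_{\max}-k_{\min}$ forces $z^{k_j}(Ce)_j\in z^{k_{\max}}\Delta\subseteq M\cap M'$, which is equivalent to the paper's observation that $z^{-K}(P^{-1}\ov{P})z^K\in\Go$. Your first paragraph, identifying $M$ with $\Psi_{\Upsilon_z}(F,K)$ via Proposition~\ref{bij:lattices} and Lemma~\ref{flag:lattice}, fills in exactly the step the paper leaves implicit when it says ``almost direct''.
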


\begin{proof}
This is an almost direct consequence of lemma~\ref{poly:form}.
\end{proof}

Note that the polynomial gauge $Q$ can be explicitly computed from
formula~(\ref{G}). On the other hand, one can also explicitly describe
the set $W_0(\Upsilon)$. For a linear map $f\in \End(\C^n)$, let $\mf{B}_f$
be the set of complete flags that are stable under $f$, and say that an apartment
$[\Phi]$ is a {\em diagonalising apartment} of $f$ if the frame $\Phi$ 
is composed of eigenlines of $f$. Then we have the following.

\begin{lemm}
Let $\dt_{\Delta}=\mathfrak{d}+\mathfrak{n}$ be the additive Jordan
decomposition of the residue map $\dt_{\Delta}=\r_{\Delta}\na$.
The pair $(F,K)\in W(\Upsilon)$ is an element of $W_0(\Upsilon)$ if and only
if $F$ admits a complete flag refinement
$\hat{F}$ such $\hat{F}\in \mf{B}_{\mf{n}}$ and there is a
diagonalising apartment $[\Phi]$ for $\mf{d}$ that respects
the flag $\hat{F}$.
\end{lemm}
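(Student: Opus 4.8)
The plan is to reduce the statement to linear algebra on the fibre $D=\Delta/\id\Delta$, on which $\dt_{\Delta}=\mf{d}+\mf{n}$ acts. By the definition of $W_0(\Upsilon)$ in Proposition~\ref{bij:lattices}, the pair $(F,K)$ lies in $W_0(\Upsilon)$ if and only if the flag $F$, transported to $D$ through the canonical isomorphism $\varphi_{\Upsilon}$ between the Birkhoff form $\Upsilon$ and $D$, is stabilised by $\dt_{\Delta}$; the admissible sequence $K$ plays no role. So everything comes down to characterising the $\dt_{\Delta}$-stable flags of $D$.

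The first step is to observe that a flag is stabilised by $\dt_{\Delta}$ if and only if it is stabilised by both $\mf{d}$ and $\mf{n}$: one implication is clear since $\dt_{\Delta}=\mf{d}+\mf{n}$, and for the other, by the standard properties of the additive Jordan decomposition $\mf{d}$ and $\mf{n}$ are polynomials in $\dt_{\Delta}$, so any $\dt_{\Delta}$-stable subspace is stable under both. Granting this, the direction ``$\Leftarrow$'' is immediate: if $\hat F$ is a complete refinement of $F$ with $\hat F\in\mf{B}_{\mf{n}}$ and $[\Phi]$ a diagonalising apartment of $\mf{d}$ respecting $\hat F$, then each component of $\hat F$ is a sum of eigenlines of $\mf{d}$, hence $\mf{d}$-stable, and is $\mf{n}$-stable by hypothesis, hence $\dt_{\Delta}$-stable; since every component of $F$ occurs among those of $\hat F$, the flag $F$ is $\dt_{\Delta}$-stable, i.e. $(F,K)\in W_0(\Upsilon)$.

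For ``$\Rightarrow$'', assume $F$ is $\dt_{\Delta}$-stable, hence stable under $\mf{d}$ and $\mf{n}$. Since $\mf{d}$ is semisimple, $D=\bigoplus_{\mu}E_{\mu}$ splits into eigenspaces; as $\mf{d}$ and $\mf{n}$ commute each $E_{\mu}$ is $\mf{n}$-stable, and the $\mf{d}$-stability of the $F_i$ forces $F_i=\bigoplus_{\mu}(F_i\cap E_{\mu})$. Call a subspace $W$ of $D$ \emph{graded} if $W=\bigoplus_{\mu}(W\cap E_{\mu})$; every $F_i$ is then graded and $\mf{n}$-stable. I would first refine $F$ to a complete flag $\hat F$ each of whose components is graded and $\mf{n}$-stable. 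It suffices to refine one slab $F_{i-1}\subsetneq F_i$ of dimension $\geq 2$: choose $\mu$ with $F_i\cap E_{\mu}\supsetneq F_{i-1}\cap E_{\mu}$; the map induced by $\mf{n}$ on $Q=(F_i\cap E_{\mu})/(F_{i-1}\cap E_{\mu})$ is nilpotent, hence not surjective, so its image lies in a hyperplane $H$ of $Q$, which is therefore $\mf{n}$-stable and lifts to an $\mf{n}$-stable subspace $W'$ of codimension one in $F_i\cap E_{\mu}$ containing $F_{i-1}\cap E_{\mu}$; then $W=W'\oplus\bigoplus_{\nu\neq\mu}(F_i\cap E_{\nu})$ is graded, $\mf{n}$-stable, of codimension one in $F_i$, and contains $F_{i-1}$, hence $F_{i-1}\subsetneq W\subsetneq F_i$. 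Recursion on the dimension of the slabs completes the refinement. Finally, from a complete graded flag $\hat F$ one reads off an adapted eigenbasis: passing from $\hat F_{j-1}$ to $\hat F_j$ exactly one summand $\hat F_j\cap E_{\mu}$ grows by one dimension, so any $w_j\in(\hat F_j\cap E_{\mu})\setminus(\hat F_{j-1}\cap E_{\mu})$ is a $\mf{d}$-eigenvector with $\hat F_j=\langle w_1,\dots,w_j\rangle$. The frame $\Phi=\{w_1,\dots,w_n\}$ then spans a diagonalising apartment of $\mf{d}$ respecting $\hat F$, and $\hat F\in\mf{B}_{\mf{n}}$ holds by construction.

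The only delicate point is the slab-refinement step: producing an intermediate subspace that is simultaneously $\mf{n}$-stable and compatible with the eigenspace decomposition of $\mf{d}$. Once one notices that this can be arranged inside a single eigenspace $E_{\mu}$ without disturbing the other graded pieces, the remaining verifications are routine bookkeeping with the decomposition $D=\bigoplus_{\mu}E_{\mu}$.
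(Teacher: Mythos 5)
Your argument is correct, and it follows the same basic route as the paper: reduce to $\dt_{\Delta}$-stability of the flag in $D$, split it into $\mathfrak{d}$-stability and $\mathfrak{n}$-stability via the additive Jordan decomposition, and characterise each separately. The paper's own proof, however, is a two-sentence sketch: it asserts ``it is known that'' $F$ is $\mathfrak{d}$-stable iff each component is a sum of $\mathfrak{d}$-eigenlines, and $\mathfrak{n}$-stable iff $F$ admits a complete refinement in $\mathfrak{B}_{\mathfrak{n}}$ --- but never argues that one can find a \emph{single} refinement $\hat F$ satisfying \emph{both} conditions at once, which is precisely what the statement of the lemma requires. Your slab-refinement step (inserting an intermediate subspace obtained by enlarging only one graded piece $F_i\cap E_\mu$, using that the induced nilpotent map on $(F_i\cap E_\mu)/(F_{i-1}\cap E_\mu)$ is not surjective to get an $\mathfrak{n}$-stable hyperplane) is exactly the point the paper elides, and it is the only nontrivial content of the lemma. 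One small presentational caveat: you could be slightly more explicit that $\mathfrak{n}(F_i\cap E_\mu)\subset F_i\cap E_\mu$ (which follows because $\mathfrak{n}$ commutes with $\mathfrak{d}$ and stabilises $F_i$), since this is what licenses the passage to the quotient $Q$; everything else is tight.
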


\begin{proof}
A flag $F$ is $\dt$-stable if and only if it is stable under
both $\mathfrak{d}$ and $\mathfrak{n}$. It is known that 
$F$ is stable under $\mathfrak{d}$ if and only if every
component $F_i$ of $F$ is a direct sum of $\mathfrak{d}$-stable lines,
and under $\mathfrak{n}$ if and only if it admits a complete
flag refinement $\hat{F}$ that belongs to the flag subvariety 
$\mathfrak{B}_{\mathfrak{n}}$ of complete flags which are preserved by the action
of $\mathfrak{n}$.
\end{proof}

%%%%%%%%%%%%
\section{The Riemann-Hilbert Problem}
\label{RH}
%%%%%%%%%%%

This problem is by now very well-known, so we will just state the
necessary notations and definitions, and
refer to the classical paper of Bolibrukh \cite{Bo1} and to the
account he gives of the construction of the
Deligne bundle (see also \cite{Sa} and \cite{IlYa}).

Let $\dd=\{s_1,\ldots,s_p\}$ a prescribed set of singular points,
$z_0\notin \dd$ be an arbitrary base point,
and let a representation
\begin{equation}\label{chi}
\appto{\chi}{\pi_1(X\backslash \dd, z_0)}{\Gc}.
\end{equation}
The Riemann-Hilbert problem asks informally for a linear differential system
having $\chi$ as monodromy representation. In the terms used in this paper, it
asks for a regular meromorphic connection $\na$ with singular set $\dd$ and
monodromy $\chi$ on a holomorphic vector bundle $\E$. If the bundle
is required to be logarithmic with respect to $\na$ one speaks 
of a {\em weak solution} to RH. In its strongest form, the Riemann-Hilbert problem asks 
for a differential system $\pr{Y}=A(z)Y$ having simple poles on $\dd$ as only
singularities, and whose monodromy representation is globally
conjugate to $\chi$. This amounts to asking for a weak solution $(\E,\na)$
which is moreover {\em trivial}.

%%%%%%%%%%%%%%%%%%
\subsection{The R\"ohrl-Deligne Construction}
\label{RD}
%%%%%%%%%%%%%%%%%

We briefly recall H. R\"ohrl's construction (as presented for instance 
in~\cite{Bo1,BMM}). Let $\u=(U_i)_{i\in I}$
be a finite open cover of $X^*=X\backslash \dd$ by
connected and simply connected open subsets $U_i\subset X^*$ such that
their intersection has the same property,
and all triple intersections are empty. Let arbitrary points $z_i\in
U_i$ and $z_{ij}\in U_i\cap U_j$, and paths
$\appto{\ga_i}{z_0}{z_i}$ and $\appto{\ga_{ij}}{z_i}{z_{ij}}$, so that
$\dt_{ij}=\ga_i\ga_{ij}\ga_{ji}^{-1}\ga_j^{-1}$ is a
positively-oriented loop around $z_i$ having winding number 1.
Then the cocycle $g=(g_{ij})$ defined over $\u$ by the constant
functions $g_{ij}=\chi([\dt_{ij}])$
defines a flat vector bundle $\F$ over $X^*$. Define the connection
$\na$ over $U_i$ by the $(0)$
matrix in the basis of sections corresponding to the cocycle $g$. The
$\na$-horizontal sections of $\F$
have by construction the prescribed monodromy behaviour. This solves
what we called the {\em topological} Riemann-Hilbert problem in our
introduction.

Add now a small neighbourhood $D$ of each singular point $s\in \dd$ to
the cover $\u$, in such a way that
$D\backslash \{s\}$ is covered by $k$ pairwise overlapping sectors
$\Sg_1=D\cap U_{j_1},\ldots,\Sg_k=D\cap U_{j_k}$.
On an arbitrarily chosen sector among the $\Sg_i$, say, $\Sg_1$, let
$\tilde{g}_{s1}=z^{L}$ where $z$ is a
local coordinate at $s$ and $L=\frac{1}{2i\pi}\log \chi(\dt)$
normalised with eigenvalues
having their real part in the interval $[0,1[$. Since the open subset
$\Sg_1$ only intersects $\Sg_2$ and $\Sg_k$,
the only necessary cocycle relations to satisfy are
$\tilde{g}_{s2}=\tilde{g}_{s1}g_{12}$ and
$\tilde{g}_{sk}=\tilde{g}_{s1}g_{1k}$,
that we take as {\em definition} of the cocycle elements
$\tilde{g}_{s2}$ and $\tilde{g}_{sk}$. Define in this way
the remaining elements of the cocycle $\ti{g}$ on on $D\cap U_{j_i}$.
By construction, the result defines a holomorphic vector
bundle $\D$ on the whole of $X$, and the connection $\na$ can be
extended as $L\frac{dz}{z}$ in the
basis of sections $(\sg)$ of $\D$ over $D$ chosen to construct $\tilde{g}_{s1}$.
The pair $(\D,\na)$ is called the {\em Deligne bundle} of $\chi$. This
construction solves simultaneously the meromorphic and the weak Riemann-Hilbert
problem.

\begin{rem}\label{bf}
The basis $(\sg)$ is, in our terms, a basis of the Birkhoff form
attached to the coordinate $z$ at $s$.
\end{rem}

%%%%%%%%%%%%%%%%%
\subsection{Weak and Strong Solutions}
\label{RH:weak}
%%%%%%%%%%%%%%%%%

The Riemann-Hilbert problem can be seen as involving
three different levels. The topological level is only governed by the
(analytic) monodromy around the prescribed singular set. The meromorphic
level is essentially based on the solution of the {\em local} inverse
problem. The third one, that we call {\em holomorphic} is global and
asks for the existence of a trivial holomorphic vector bundle.
In fact, separating these three aspects is not so easy to do, because
the R\"ohrl-Deligne
construction in fact yields a particular {\em holomorphic} vector
bundle $\E$ with a
connection $\na$ that already respects the holomorphic prescribed
behaviour.

What makes the strong Riemann-Hilbert problem a difficult one is
precisely this third level.
The local meromorphic invariants added to the topological solution of the
inverse monodromy specify up to {\em meromorphic equivalence class}
the connection $\na$ on $X$. In this respect, the natural category to
state this construction is not the category of holomorphic vector
bundles with meromorphic connections, but the meromorphic vector bundles, that
is, pairs $(\V,\na)$ where $\V$ is locally (but in fact globally)
isomorphic to $\mx^n$. This is why we call the second step
{\em meromorphic}. The Riemann-Hilbert problem with the given data
solved here corresponds to the {\em very weak} Riemann-Hilbert problem (as coined by
Sabbah~\cite{Sa}): any subsheaf $\F$ of locally free $\h_X$-modules
contained in the (trivial) meromophic bundle $\V$ is endowed naturally with the
connection $\na$, and therefore is a holomorphic vector bundle with a regular connection
having the prescribed monodromy.
As stated by the next result (and is otherwise well known),
all solutions to the weak problem are obtained as local modifications of
the Deligne bundle.

\begin{prop}
\label{weak:sol}
Let $\appto{\ti{\pi}}{\ti{E}}{X}$ and
$\appto{\ti{\na}}{\ti{\E}}{\tens{\ti{\E}}{\h}{\om}}$ be a
weak solution to the Riemann-Hilbert problem. Then there exist a finite
set $S\subset X$, and local lattices $M_x$ for $x\in S$ such that the pair
$(\ti{\E},\ti{\na})$ is holomorphically isomorphic to $(\D^M,\na)$.
\end{prop}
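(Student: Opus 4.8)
The plan is to realise $(\ti{\E},\ti{\na})$ as a holomorphic subsheaf of the meromorphic bundle $(\V,\na)$ of the Deligne bundle $\D$, and then to invoke Lemma~\ref{coco}. First I compare the two connections over $X^*=X\backslash\dd$. Both $\ti{\E}_{\vert X^*}$ (with $\ti{\na}$) and $\D_{\vert X^*}$ (with $\na$) are holomorphic vector bundles carrying a \emph{holomorphic} flat connection, since a logarithmic connection has no pole away from $\dd$; hence each is canonically isomorphic to the local system of its horizontal sections tensored with $\ox$. As both have monodromy conjugate to $\chi$, these local systems are isomorphic, so there is an isomorphism of bundles-with-connection $\phi\colon(\ti{\E}_{\vert X^*},\ti{\na})\longrightarrow(\D_{\vert X^*},\na)$.

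The second, and main, step is to extend $\phi$ across each $s\in\dd$ as a \emph{meromorphic} isomorphism. Since $\ti{\E}$ (resp. $\D$) is logarithmic at $s$ with respect to $\ti{\na}$ (resp. $\na$), both connections are regular singular there; in local frames a fundamental matrix of horizontal sections has the form $\hat{Y}(z)z^{L}$ with $\hat{Y}$ meromorphic invertible and $L$ constant with $e^{2i\pi L}$ the local monodromy. Writing $\phi$ in such frames, its matrix is $\Phi=\hat{Y}_2\,z^{L_2}Cz^{-L_1}\,\hat{Y}_1^{-1}$ where $C$ is the constant matrix of $\phi$ on horizontal sections; as $\phi$ intertwines the monodromies, $C$ conjugates $e^{2i\pi L_2}$ to $e^{2i\pi L_1}$, so $z^{L_2}Cz^{-L_1}$ is single-valued of moderate growth. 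Hence $\Phi$ is single-valued of moderate growth near $s$, so meromorphic by the Riemann extension theorem, and likewise for $\Phi^{-1}$. Alternatively one may quote Deligne's meromorphic Riemann--Hilbert correspondence (\cite{De}): the regular meromorphic connections $(\ti{\E}\otimes_{\ox}\mx,\ti{\na})$ and $(\V,\na)$ having the same monodromy are isomorphic, which produces $\Phi$ directly.

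Gluing $\phi$ over $X^*$ with these meromorphic germs yields an isomorphism of meromorphic bundles-with-connection $\Phi\colon(\ti{\E}\otimes_{\ox}\mx,\ti{\na})\longrightarrow(\V,\na)$. Restricted to the lattice $\ti{\E}$, it embeds $\ti{\E}$ as a locally free rank-$n$ subsheaf $\F=\Phi(\ti{\E})$ of $\V$, that is $\F\in H$, and $\Phi$ intertwines $\ti{\na}$ with the connection induced on $\F$ by $\na$. Since $\D\in H$ and $X$ is compact, Lemma~\ref{coco} provides a finite set $S\subset X$ and lattices $M_x\in\ra_x$ for $x\in S$ with $\F=\D^M$, the induced connection being $\na^M$; in fact $\F_x=\D_x$ already for every $x\in X^*$, so one may take $S\subseteq\dd$. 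Then $\Phi$ is the required holomorphic isomorphism $(\ti{\E},\ti{\na})\longrightarrow(\D^M,\na)$.

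The delicate point is the meromorphic extension of the second step --- that a flat isomorphism between two \emph{regular singular} connections is automatically meromorphic at the singularity --- which rests on the moderate-growth behaviour of horizontal sections of a regular connection, together with single-valuedness coming from the intertwining of the monodromies. Everything else is a routine assembling of Lemma~\ref{coco} with the Riemann--Hilbert correspondence on the punctured surface $X^*$.
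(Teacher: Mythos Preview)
The paper does not actually prove this proposition: it is stated as a well-known fact (``as is otherwise well known'') and left without proof. Your argument is correct and is precisely the standard one that justifies this folklore statement: the Riemann--Hilbert correspondence over $X^*$ gives the flat isomorphism $\phi$, regularity at each $s\in\dd$ forces the matrix of $\phi$ to have moderate growth and hence to be meromorphic there, and then Lemma~\ref{coco} converts the resulting embedding $\ti{\E}\hookrightarrow\V$ into the desired description $\F=\D^M$ with $S\subset\dd$. The only place one could quibble is notational: in the formula $\Phi=\hat{Y}_2\,z^{L_2}Cz^{-L_1}\,\hat{Y}_1^{-1}$ you should make explicit that $\Phi Y_1=Y_2C$ (so that the constant $C$ sits on the right of $Y_2$), since this is what makes the single-valuedness equivalent to $C$ intertwining $e^{2i\pi L_1}$ and $e^{2i\pi L_2}$; but this is exactly what you assert, and the computation goes through.
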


The last step of the strong Riemann-Hilbert problem consists in
searching the set of holomorphic vector bundles endowed with the connection $\na$ for a bundle which at
the same time has the required holomorphic invariants and is
holomorphically trivial. A negative answer requires to know
all the holomorphic vector bundles with this prescribed
logarithmic property.  Note that up to this point, the discussion
presented in this section holds over an arbitrary compact Riemann surface.

%%%%%%%%%%%%%%%%%%%%%
\subsubsection{Plemelj's Theorem}
\label{plemelj}
%%%%%%%%%%%%%%%%%%%%%

In 1908, the Slovenian mathematician J. Plemelj (see \cite{Pl}) proved a first
version of the strong Riemann-Hilbert problem, under the assumption
that at least one monodromy is diagonalisable. Whereas his first proof
used an analytic approach (Fredholm integrals) to construct the
actual matrix of solutions, to thence deduce the differential system and
prove that it has only simple poles, the general framework of vector bundles
recalled so far allows to establish this fact in an amazingly concise way.

\begin{thm}[Plemelj]
If one of the elementary monodromy maps from representation
$\appto{\chi}{\pi_1(X\backslash \dd,z_0)}{\Gc}$ is
diagonalisable, then the Riemann-Hilbert problem has a strong
solution.
\end{thm}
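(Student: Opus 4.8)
The plan is to realise the strong solution as a single stalk modification of the Deligne bundle $\D$ on $X=\P$, carried out at the distinguished singular point $s\in\dd$ whose local monodromy is diagonalisable, and performed inside a carefully chosen apartment of the local Bruhat--Tits building $\B$ at $s$, so that the modified stalk is again logarithmic.

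First I would record that the residue $\dt_\Delta=\r_\Delta\na$ of the Deligne lattice $\Delta=\D_s$ at $s$ is \emph{semisimple}. Writing $\dt_\Delta=\mathfrak{d}+\mathfrak{n}$ for its additive Jordan decomposition, the local monodromy at $s$ --- conjugate to the prescribed $\chi(\dt)$ for a small positive loop $\dt$ around $s$, and, by the R\"ohrl--Deligne normalisation of section~\ref{RD}, equal to $\exp(2i\pi\,\dt_\Delta)$ --- has unipotent part $\exp(2i\pi\,\mathfrak{n})$; since $\chi(\dt)$ is diagonalisable this is the identity, whence $\mathfrak{n}=0$. Next I would fix the Birkhoff form $\Upsilon=\Upsilon_z$ of $\Delta$ (section~\ref{BF}), transport $\dt_\Delta$ to an endomorphism of $\Upsilon$ through the canonical isomorphism $\varphi_\Upsilon$ between $\Upsilon$ and $\Delta/\id\Delta$, and pick a $\C$-basis $(e)$ of $\Upsilon$ made of eigenvectors of $\dt_\Delta$ --- possible precisely because $\dt_\Delta$ is semisimple. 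As $(e)$ is then an $\h$-basis of $\Delta$, the apartment $[\Phi]$ it spans contains the vertex $[\Delta]=[\D_s]$, and $[\Phi]$ is a diagonalising apartment for $\dt_\Delta$.

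The crucial step, which I expect to be the main point of the argument, is that \emph{every lattice of $[\Phi]$ is logarithmic}. Indeed, any $M\in[\Phi]$ lies in the sub-building $\B(\Upsilon)$, and a $\la$-basis of $[\Phi]$ is a Smith basis of $\Delta$ for $M$; hence the relative flag $F_\Delta(M)$ in $\Delta/\id\Delta$ is spanned by a sub-collection of the images of the eigenbasis $(e)$, so each of its components is a direct sum of eigenlines of $\dt_\Delta$ and $F_\Delta(M)$ is $\dt_\Delta$-stable. Therefore the admissible pair $(F_\Delta(M),\k_\Delta(M))$ belongs to $W_0(\Upsilon)$, and Proposition~\ref{bij:lattices} (whose proof shows that any lattice in the $\Upsilon$-fibre of a $\dt_\Delta$-stable flag is logarithmic) gives $M\in\ra_{\log}$; alternatively one notes that the vertices of $[\Phi]$ are logarithmic and invokes the geodesic convexity of $\ra_{\log}$. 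It is exactly here that the diagonalisability hypothesis is genuinely used: without it, no apartment through $[\Delta]$ need meet $\ra_{\log}$ at all, so a trivialising modification at $s$ would destroy the logarithmic character there, and Theorem~\ref{per} alone does not suffice.

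It then remains to invoke Theorem~\ref{per} with $\E=\D$, $x=s$ and the apartment $[\Phi]$ just built: it produces a Birkhoff--Grothendieck trivialisation $\F=\D^M$ of $\Delta$ whose stalk $M=\F_s$ lies in $[\Phi]$. On the one hand $M\in\ra^0$, so $\D^M$ is quasi-trivial, and tensoring by a line bundle equipped with a logarithmic connection with a single pole at $s$ of integer --- hence monodromy-trivial --- residue turns $\D^M$ into the trivial bundle $\ox^n$ without altering the monodromy. On the other hand $(\D^M,\na^M)$ is a weak solution: by Lemma~\ref{coco} the connection $\na^M$ is the canonical extension of $\na$, so it is regular with monodromy $\chi$, it is logarithmic at every $s_i\neq s$ because the stalk there is unchanged from $\D$, and it is logarithmic at $s$ because $M\in[\Phi]\subset\ra_{\log}$. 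Thus $(\D^M,\na^M)$ --- a trivial holomorphic bundle carrying a regular connection with simple poles on $\dd$ and monodromy $\chi$ --- is the desired strong solution. Apart from the verification that $[\Phi]\subset\ra_{\log}$, the argument is merely the assembly of Proposition~\ref{bij:lattices} and Theorem~\ref{per}, the passage from ``quasi-trivial'' to ``trivial'' being cosmetic.
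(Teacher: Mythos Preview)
Your proof is correct and follows essentially the same route as the paper's: choose an eigenbasis $(e)$ of the Birkhoff form $\Upsilon$ at $s$ for the (semisimple) residue $\dt_\Delta$, observe that the entire apartment $[\Phi]$ spanned by $(e)$ consists of logarithmic lattices, and apply Theorem~\ref{per} to find a trivialising lattice $M\in[\Phi]$, so that $\D^M$ is simultaneously logarithmic and trivial. The only cosmetic difference is that the paper appeals directly to condition~\ref{slatii}) in the proof of Proposition~\ref{bij:lattices} (the diagonal matrix $L$ trivially satisfies $z^{-K}Lz^K\in\m(\h)$), whereas you invoke the statement of that proposition via the $\dt_\Delta$-stability of the induced flags; your explicit justification that $\dt_\Delta$ is semisimple, and your remark on passing from quasi-trivial to trivial, are welcome elaborations that the paper leaves implicit.
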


\begin{proof}
Let $(\D,\na)$ be the R\"ohrl-Deligne bundle attached to the
representation $\chi$. Let, say $G=\chi(\ga)$ around $s\in\dd$, be
diagonalisable. Let $\Upsilon$ be a Birkhoff form at $s$, and let
$(e)$ be a basis of $\Upsilon$ where $G$ is diagonal. According to condition
\ref{slatii}) in section \ref{stable}, the whole apartment $[\Phi]$ spanned
by $(e)$ consists of logarithmic lattices, whereas theorem ~\ref{per} 
implies that $[\Phi]$ contains a trivialising lattice $M$.
The vector bundle $\D^M$ is therefore both logarithmic and trivial.
\end{proof}

\begin{rem}
Here we have a solution by modifying the Deligne bundle only at one
point. Note that the lattice $M$ corresponds to a Birkhoff-Grothendieck
trivialisation of $\D$ (see theorem~\ref{BG:logTriv} below). Also note that this result also holds replacing
$\D$ with any other weak solution to Riemann-Hilbert.
\end{rem}

%%%%%%%%%%%%%%
\subsubsection{Trivialisations of Weak Solutions}
\label{Triv:weak}
%%%%%%%%%%%%%%

Let $\E$ be a weak solution of the Riemann-Hilbert problem, and 
let $\F$ be a trivialisation of $\E$ at $x\notin \dd$. In a global basis of 
sections $(e)$ of the bundle $\F$, the connection $\na$ is expressed by
the matrix of global meromorphic 1-forms $\Omega$, which has a simple pole
at every $s\in \dd$, and an {\em a priori} uncontrolled pole at $x$.
Assuming for simplicity that $x\notin \dd$ is the point at infinity 
$\infty\in \P$, there exist
matrices $A_i \in \m(\C)$ for $1\lsl i\lsl p$ and a matrix
$$B(z)=B_0+\cdots+B_t z^t$$ such that the connection has the following matrix
$$\Omega=\left(\sum_{i=1}^p \frac{A_i}{z-s_i}+B(z)\right)dz.$$
The most surprising consequence of the permutation lemma,
as we state it, concerns the analytic invariants of the weak solutions
to the Riemann-Hilbert problem.

\begin{thm}
\label{BG:logTriv}
Let $\E$ be a weak solution to the Riemann-Hilbert problem for
$\chi$. Then, for any $x\not\in \dd$, there exists a
Birkhoff-Grothendieck trivialisation $\F$ of $\E$ at $x$ which is also
{\em logarithmic} at $x$. Let $Y=\Gamma(X,\F)$ and let $\psi_s=\r_s^{\F}\na\in \End(Y)$.
Then we have the following.
\begin{enumerate}
\item The map $\Psi=\sum_{s\in \dd} \psi_s=-\r_x\na$ is semi-simple, and has
integer eigenvalues, which are equal to the type of the bundle $\E$.
\item The image of the Harder-Narasimhan filtration of $\E$ in $Y$ is
equal to the flag induced by the eigenspaces of $\Psi$ ordered by
increasing values. 
\end{enumerate}
\end{thm}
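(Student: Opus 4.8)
The plan is to combine three ingredients already established: the geometric version of the permutation lemma (Theorem~\ref{per}), the description of logarithmic lattices as $\dt_\Delta$-stable flags (Proposition~\ref{bij:lattices}), and the identification of the type with elementary divisors of a Birkhoff--Grothendieck trivialisation (Proposition~\ref{del=type}), together with the local characterisation of the Harder--Narasimhan flag (Lemma~\ref{localHNF}). Throughout I work at the stalk over $x$, writing $\la=\E_x\in\ra$ in the Bruhat--Tits building $\B$ at $x$. Since $x\notin\dd$, the connection $\na$ is \emph{holomorphic} near $x$ in a basis of global sections of any weak solution; equivalently $\la$ and all lattices in its apartments are logarithmic at $x$ (the condition \ref{slatii}) in section~\ref{stable} is automatic because the residue at $x$ in a global-section basis is the negative sum of the residues at the $s\in\dd$, which is a constant matrix).

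First I would produce the logarithmic Birkhoff--Grothendieck trivialisation. Pick any apartment $[\Phi]$ through $[\la]$ spanned by a basis $(e)$ of $\la$; by Theorem~\ref{per} there is a Birkhoff--Grothendieck trivialisation $M$ of $\la$ inside $[\Phi]$. Because the residue $\r_x\na$ in a global basis of $\F=\E^M$ is a constant matrix $L$, and $M$ is spanned (up to the diagonal twist $z^K$) by a basis of the corresponding global form $Y_M$, the connection has matrix $L\,dz/z$ there, so $\F$ is logarithmic at $x$; that is exactly condition~\ref{slatii}) of section~\ref{stable}. This gives the bundle $\F$ with $Y=\Gamma(X,\F)$. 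Now set $\Psi=\sum_{s\in\dd}\psi_s$. Summing the residue relations over all singular points (including $x$, where the global one-form $\Omega$ has all its poles) yields, by the residue theorem on $\P$, that $-\r_x^{\F}\na=\sum_{s\in\dd}\psi_s=\Psi$ as endomorphisms of $Y$. On the other hand, $\r_x^{\F}\na$ in the Birkhoff--Grothendieck basis $(\vep)=z^K(e)$ of $Y_M$ is the diagonal matrix $-K$ of elementary divisors of $\la$ in $M$: indeed the connection matrix of $\F$ in $(e)$ is holomorphic at $x$, and the gauge $z^{-K}\!\cdot z^K$ introduces precisely the residue $-K\,dz/z$ (cf.\ the computation in the proof of Proposition~\ref{Deligne:hol}). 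Hence $\Psi$ is conjugate to $\diag(K)$: it is semisimple with integer eigenvalues, and by Proposition~\ref{del=type} those eigenvalues (in reverse order) are exactly the type $T(\E)$ of $\E$. This proves item~1.

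For item~2, I would invoke Lemma~\ref{localHNF}\ref{localHNFi}: the Harder--Narasimhan flag $\hn_\la$ of $V=\V_x$ is spanned, in a Birkhoff--Grothendieck basis, by grouping the basis vectors $e_i$ according to decreasing values of $a_i$, i.e.\ it is the eigenspace flag of $K$ (equivalently of $\r_x^{\F}\na=-K$) ordered by decreasing $a_i$, which is the eigenspace flag of $\Psi=-\r_x^{\F}\na$ ordered by \emph{increasing} eigenvalues. Transporting to $Y$ via the canonical isomorphism $Y\simeq\Gamma(X,\F)$ and using that $Y_M$ is a Smith form for $\la$ (so that $\varphi_{Y_M}$ carries $\hn_\la$ to the $\C$-flag $\hn_\la(Y)$, as noted at the end of section~\ref{HN:eldiv}), the image of $\hn(\E)$ in $Y$ is exactly the flag of eigenspaces of $\Psi$ ordered by increasing values, as claimed.

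The main obstacle is not any single estimate but the careful bookkeeping of \emph{which} residue lives on \emph{which} space. Concretely: the $\psi_s$ are defined as endomorphisms of $Y=\Gamma(X,\F)$ via Lemma~\ref{triv1}, whereas $\r_x\na$ naturally acts on $\la/\id_x\la$; one must check that under the canonical identification $Y\xrightarrow{\sim}\la/\id_x\la$ afforded by the form $Y_M$ (valid because $M$ is a trivialisation and $Y_M$ its global form), the map $-\r_x\na$ coincides with $\sum_s\psi_s$, and that this identification is the same one under which $\hn(\E)$ descends. This is where the hypothesis that $\F$ be \emph{Birkhoff--Grothendieck} (not merely a trivialisation) is essential, since it is exactly what makes $(\vep)=z^K(e)$ simultaneously a Smith basis for $\la$ in $M$ and a $\C$-basis of $Y_M$, so that the diagonal residue $-K$ is visible in the global basis. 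Once this dictionary is set up, items~1 and~2 follow by the linear-algebra facts recalled above with essentially no further computation.
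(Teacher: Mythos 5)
The central gap is in the first step, where you assert that the Birkhoff--Grothendieck trivialisation produced by Theorem~\ref{per} in an \emph{arbitrary} apartment through $[\la]$ is automatically logarithmic at $x$, because ``the residue $\r_x\na$ in a global basis of $\F$ is a constant matrix $L$'' and hence ``the connection has matrix $L\,dz/z$.'' This is circular: if $\F$ had a higher--order pole at $x$, its matrix in a global basis would have extra terms of the form $B_k z^k\,dz$ with $k\gsl 0$ near $x=\infty$ (exactly the $B(z)$ appearing in the Fuchsian form in section~\ref{Triv:weak}), and the residue alone does not preclude them. Similarly, in the second paragraph the assertion that ``the connection matrix of $\F$ in $(e)$ is holomorphic at $x$'' and that the gauge $z^{-K}(\cdot)z^K$ ``introduces precisely the residue $-K\,dz/z$'' is only valid when the holomorphic matrix part $z^{-K}\Omega z^{K}$ stays holomorphic under the shearing, which in general it does \emph{not}: for a lattice at distance $\gsl 2$ from $\la$ the terms $\Omega_{ij}z^{k_j-k_i}$ acquire poles of order $\gsl 2$. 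A small explicit example: take the Deligne lattice $\Delta=\langle e_1,e_2\rangle$ with $\na e_i=0$, the apartment spanned by $(u_1,u_2)=(e_1+ze_2,e_2)$ (still a basis of $\Delta$), and $M=\langle z^{-1}u_1,\,zu_2\rangle$; then the $(2,1)$ entry of the connection matrix in this basis is $z^{-2}\,dz$, which is not logarithmic, even though $M$ lies in an apartment through $[\Delta]$.

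What is missing, and what the paper uses, is the \emph{Birkhoff form} $\Upsilon$ at $x$, which is precisely the form in which the connection matrix is $L\,dz/z$ with $L$ constant (and in fact $L=0$ since $x$ is an apparent singularity). One should replace an arbitrary Birkhoff--Grothendieck trivialisation $M$ by its $\Upsilon$-lifting $\Psi_\Upsilon(F_\la(M),K)$ via Corollary~\ref{cor:formChange}: the corollary guarantees the result is again a Birkhoff--Grothendieck trivialisation, and since every flag in $D=\D_x/\id_x\D_x$ is stable under $\r_x^{\D}\na=0$, Proposition~\ref{bij:lattices} shows the lifted lattice is logarithmic. Equivalently, one may run Theorem~\ref{per} specifically in an apartment spanned by a basis of $\Upsilon$ (not ``any apartment''). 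Once the trivialisation is chosen this way, your computation of the residue and the identification of the Harder--Narasimhan flag via Lemma~\ref{localHNF} and Proposition~\ref{del=type} are correct and match the paper's argument, so the remainder of the proof needs only this one repair.
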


\begin{proof}
If $x\notin \dd$, the monodromy at $x$ is trivial, and the stalk
$\E_x$ of $\E$ coincides with $\D_x$. The Birkhoff form $\Upsilon$ of $D$
(which is then unique) is equal to the space $V^{\na}$ of horizontal sections at $x$. 
All flags in $D=\D_x/\id_x\D_x$ are stable under $\r_x^{\D}\na=0$. 
According to corollary~\ref{cor:formChange}, the $\Upsilon$-lifting 
of the flag induced by any Birkhoff-Grothendieck
trivialisation of $\E$ at $x$ is a 
{\em logarithmic Birkhoff-Grothendieck trivialisation}
of $\E$ at $x$. In a global basis of sections $(e)$ of 
$\F$, the connection has the following matrix
\begin{eqnarray}
A&=&\sum_{s\in \dd\minus{\infty}} \frac{A_s}{z-s}+\frac{B}{z-x}\mbox{ where
}B=-\sum_{s\in \dd} A_s
\mbox{ if }x\neq \infty\label{FSx1}\\
&=&\sum_{s\in \dd} \frac{A_s}{z-s}
\mbox{ if }x=\infty\notin \dd\label{FSx2}
\end{eqnarray}
since $\na$ has no other singularities
outside $\dd\cup\{x\}$. The eigenvalues of $-B=\sum_{s\in \dd} A_s$ are therefore equal to
the type of $\E$, and the Harder-Narasimhan filtration is defined by
the blocks of equal eigenvalues ordered by increasing values. 
\end{proof}

As a consequence, we deduce the following new sufficient condition for the
solubility of the strong Riemann-Hilbert problem.

\begin{cor}
\label{local:log}
Let $\E\subset H$ be a holomorphic vector bundle in $(\V,\na)$, and let $\D$ be
the Deligne lattice of $\V$. Let $x\in X$, such that $\E_x=\D_x$. 
If the flag $F$ induced in $D=\D_x/\id_x\D_x$
by the stalk $\F_x$ of a Birkhoff-Grothendieck trivialisation $\F$ of $\E$ at $x$
is stable under the residue map $\r_x^{\D}\na\in \End(D)$, then there exists a 
Birkhoff-Grothendieck trivialisation $\ti{\F}$ of $\E$ at $x$ which is moreover
{\em logarithmic} at $x$.
\end{cor}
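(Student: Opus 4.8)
The strategy is to combine Theorem~\ref{per} (the geometric permutation lemma) with the description of logarithmic lattices via stable flags in Proposition~\ref{bij:lattices}, essentially reproducing the mechanism of the proof of Plemelj's theorem but in the precise situation at hand. First I would fix at $x$ a Birkhoff form $\Upsilon$ of the Deligne lattice $\D_x$ attached to a local coordinate $z$; by Remark~\ref{bf} this is the space of horizontal sections, and since $\E_x=\D_x$ the residue $\dt=\r_x^{\D}\na$ is a well-defined endomorphism of $D=\D_x/\id_x\D_x$. By hypothesis the flag $F$ induced in $D$ by $\F_x$ (for the given Birkhoff-Grothendieck trivialisation $\F$ of $\E$ at $x$) is $\dt$-stable. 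Let $K=(k_1,\dots,k_n)$ be the elementary divisors of $\D_x$ in $\F_x$, i.e.\ (up to reversal) the type of $\E$, by Proposition~\ref{del=type}.

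Next I would apply Corollary~\ref{cor:formChange}: starting from the Birkhoff-Grothendieck trivialisation $\F$ with data $(F,K)$, the lattice $\tM=\Psi_{\Upsilon}(F,K)$ obtained by lifting the \emph{same} flag $F$ and the \emph{same} elementary divisors $K$ into the Birkhoff form $\Upsilon$ is again a Birkhoff-Grothendieck trivialisation of $\E$ at $x$. Set $\ti{\F}=\D^{\tM}$, so $\ti{\F}$ is a Birkhoff-Grothendieck trivialisation of $\E$ at $x$ by construction. The key point is now that $\tM$ is \emph{logarithmic} at $x$: by Proposition~\ref{bij:lattices}, a lattice in the $\Upsilon$-fibre of a flag of $D$ is logarithmic precisely when that flag is stabilised by $\dt$. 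Since $\F_{\D_x}(\tM)=F$ by the very definition of $\Psi_{\Upsilon}$, and $F$ is $\dt$-stable by hypothesis, the pair $(F,K)$ lies in $W_0(\Upsilon)$, hence $\tM\in\ra_{\log}$. Therefore $\ti{\F}$ is simultaneously a Birkhoff-Grothendieck trivialisation of $\E$ at $x$ and logarithmic at $x$, which is exactly the claim.

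The one place that needs a little care — and what I expect to be the main obstacle — is checking that the ``$\Upsilon$-lifting'' step genuinely preserves the Birkhoff-Grothendieck property while changing the form from the global form $Y_{\F_x}$ to the Birkhoff form $\Upsilon$. This is handled by Corollary~\ref{cor:formChange}, whose proof rests on the observation that the gauge from a Smith basis $(e)$ of $\D_x$ for $\F_x$ to its $\Upsilon$-basis $(e_Y)$ is tangent to the identity (of the form $I+zU$, by Lemma~\ref{flag:lattice}), hence has invertible principal leading minors, so the permutation lemma (Theorem~\ref{per}, via Lemma~\ref{BGTriv:all}) applies with trivial permutation and $z^K(e_{\Upsilon})$ spans a trivialising lattice. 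Everything else — the independence of the Birkhoff form from auxiliary choices up to canonical isomorphism (Lemma~\ref{B:form}), and the characterisation~(\ref{slati})--(\ref{slatii}) of logarithmic lattices used in Proposition~\ref{bij:lattices} — is already available in the excerpt, so the corollary follows by assembling these pieces.
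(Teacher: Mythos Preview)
Your proof is correct and follows essentially the same route as the paper's: take the $\Upsilon$-lifting $\tM=\Psi_{\Upsilon}(F,K)$ of the flag $F$, invoke Proposition~\ref{bij:lattices} to see $\tM$ is logarithmic (since $F$ is $\dt$-stable), and invoke Corollary~\ref{cor:formChange} (equivalently, the permutation lemma) to see $\tM$ is a Birkhoff-Grothendieck trivialising lattice. One minor aside: your remark that the Birkhoff form is the space of horizontal sections is only guaranteed when $x\notin\dd$ (trivial monodromy), whereas the corollary allows any $x$ with $\E_x=\D_x$; but this plays no role in the argument and the proof stands as written.
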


\begin{proof}
Let $\tM$ be the $\Upsilon$-lifting of the flag $F$, where $\Upsilon$ is a Birkhoff form of
the local stalk $\D_x$ of the Deligne bundle at $x$. According to proposition~\ref{bij:lattices},
the lattice $\tM$ is logarithmic, and by the permutation lemma, it is
a Birkhoff-Grothendieck trivialising lattice. Therefore, the bundle $\E^{\tM}$
satisfies the conclusions of the corollary.
\end{proof}

At this point, we would like to sum up our findings about trivial bundles in the following
proposition.

\begin{prop}\label{triv:resume}
Let $\F\in H_0$ be a trivial bundle in $\V$, and let $Y=\Gamma(X,\F)$ be the $\C$-vector space
of global sections. Let $x\in X$, and $\E\in H$ such that $(\F,\E)\in R_x$.
\begin{enumerate}[i)]
\item $Y$ admits a well-defined flag $\hn$ induced by the Harder-Narasimhan filtration of $\E$.\label{t1}
\item If $\F$ is a Birkhoff-Grothendieck trivialisation of $\E$ at $x$, then the flag $\hn$ is obtained
from a Smith basis of $Y$ for the stalk $\E_x$, according to the elementary divisors $\k_{\E_x}(\F_x)$,
which give moreover the {\em type} $T(\E)$ of the bundle $\E$.\label{t2}
\item If $\F$ is additionally {\em logarithmic} at $x$, and the stalk $\E_x$ coincides with the Deligne lattice
$\D_x$, then the type $T(\E)$ is given by the integer parts of the eigenvalues of the residue $\r_x^{\F}\na\in \End(Y)$,
that is, of the {\em exponents} of $\na$ on $\F$ at $x$.\label{t3}
\item Finally, if $\E\in \RH_{\chi}$ is moreover a weak solution to Riemann-Hilbert, then the following relation holds
$$\sum_{x\in X}\r_x^{\F}\na=0.$$\label{t4}
\end{enumerate}
When $(\E,\F)$ satisfy~\ref{t1}) to~\ref{t4}), we say that they form a {\em weak RH-pair} at $x$.
\end{prop}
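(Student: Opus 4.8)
The plan is to treat i)--iv) in order; each one is a repackaging of a result already established, so no new tool is needed. For i): the Harder--Narasimhan filtration $\hn(\E)$ is a filtration of $\E$ by sub-bundles $E_i\subset\E\subset\V$, and since $(\F,\E)\in R_x$ the sheaves $\E$ and $\F$ agree away from $x$, so each $E_i$ restricts there to a saturated sub-sheaf of $\F$ and extends uniquely to a sub-bundle $\F_i$ of $\F$ (extension of a saturated sub-sheaf across a single point being unique). Then $Y_i:=\Gamma(X,\F_i)$ gives the claimed flag on $Y$; I would remark that this is exactly the filtration of $Y$ already produced at the end of Section~\ref{HN:eldiv}, whose construction uses only triviality of $\F$ and not that $\F$ be a Birkhoff--Grothendieck trivialisation.

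For ii), with $\F$ a Birkhoff--Grothendieck trivialisation of $\E$ at $x$ and $X=\P$: Lemma~\ref{BG:del} furnishes a Smith basis $(e)$ of $\E_x$ relative to $\F_x$ that is simultaneously a $\C$-basis of the global form $Y$; Proposition~\ref{del=type} identifies the type $T(\E)$ with $\k_{\E_x}(\F_x)$ read in reverse order; and Lemma~\ref{localHNF} identifies the image of $\hn(\E)$ in $\E_x/\id_x\E_x$ with the flag spanned by initial segments of the reduction $(\ov e)$. Lifting that flag back through $(e)\subset Y$ is precisely the asserted description of $\hn$ on $Y$ by a Smith basis with elementary divisors $\k_{\E_x}(\F_x)$.

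For iii), add the hypotheses ``$\F$ logarithmic at $x$'' and ``$\E_x=\D_x$''. By Lemma~\ref{triv1}, $\r_x^{\F}\na$ is a well-defined endomorphism of $Y$, and in any basis of $Y$ (which is an $\h$-basis of the lattice $\F_x$) it is represented by the residue matrix of $\na$ on the logarithmic lattice $\F_x$. Since $\E_x=\D_x$, Proposition~\ref{bij:lattices} exhibits $\F_x$ as the $\Upsilon$-fibre of a flag stabilised by the Deligne residue $\r_x^{\D}\na$, with admissible sequence $\k$; comparing with the Birkhoff form of $\D_x$, as in condition~\ref{slatii}) of the proof of that proposition, shows the residue of $\na$ on $\F_x$ to be the block-diagonal part of the Deligne residue minus $\k$. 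Hence the eigenvalues of $\r_x^{\F}\na$ have integer parts equal to $-\k$, so that, up to the order and sign convention fixed in Section~\ref{BG:triv}, Proposition~\ref{del=type} identifies these integer parts with the type $T(\E)$. The one point requiring real care is to check that the surviving block-diagonal part of the Deligne residue still has all its eigenvalues' real parts in $[0,1[$: this holds because it is the restriction of $\r_x^{\D}\na$ to the subspaces of the very flag attached to $\F_x$, so that ``integer part'' extracts $\k$ unambiguously. Everything else here is bookkeeping already carried out in the cited statements, and this is the step I expect to be the main obstacle.

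For iv), add that $\E$ is a weak Riemann--Hilbert solution: then $\na$ has only simple poles, located on $\dd$, and since $\F$ is logarithmic at $x$ and coincides with $\E$ away from $x$, the bundle $\F$ is logarithmic at every point, with singular set $\dd\cup\{x\}$. In a $\C$-basis of $Y$ the connection matrix is a global meromorphic matrix $1$-form on $X$ with at worst simple poles, and $\r_s^{\F}\na$ is represented by its residue at $s$. The residue theorem on the compact Riemann surface $X$ then yields $\sum_{x\in X}\r_x^{\F}\na=0$ in $\End(Y)$ --- the relation already met, in the guise $B=-\sum_{s\in\dd}A_s$, inside the proof of Theorem~\ref{BG:logTriv}.
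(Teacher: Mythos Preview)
The paper presents this proposition explicitly as a summary (``we would like to sum up our findings'') and gives no proof; your proposal correctly identifies the earlier results being summarised and assembles them in the right order. Items~\ref{t1})--\ref{t2}) are exactly Lemma~\ref{BG:del}, Proposition~\ref{del=type} and Lemma~\ref{localHNF}; item~\ref{t3}) is the residue computation implicit in Proposition~\ref{bij:lattices} together with the normalisation of the Deligne lattice; and item~\ref{t4}) is the residue theorem, already used in the form $B=-\sum_{s\in\dd}A_s$ in the proof of Theorem~\ref{BG:logTriv}. There is nothing to compare against, and your argument for~\ref{t3})---in particular the check that the surviving block-diagonal part of the Deligne residue still has real parts in $[0,1[$, so that taking integer parts recovers $-\k$---is more explicit than anything the paper writes down.
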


Let $(\E,\F)$ be a weak RH-pair at $x\notin \dd$. Let $(\sg)$ be any basis of $Y=\Gamma(X,\F)$.
In $(\sg)$, the connection has a matrix of the form~(\ref{FS}). The identification of $Y$ 
to $\C^n$ by means of $(\sg)$ endows $\C^n$ with $p+1$ linear maps $\psi_s$ for $s\in \dd^*=\dd\cup\{x\}$, 
that we can identify with the matrices $\ti{L}_s$ for $s\in \dd$ and $-\sum_{s\in \dd} \ti{L}_s$
for $s=x$. With these notations, we set the following definition.

\begin{defn}\label{lfm}
The space $\C^n$, endowed with the maps $\psi_s$ for $s\in \dd^*$ 
is called a {\em linear Fuchsian model} of $\E$. 
\end{defn}

With this notion, we can reduce some questions about vector bundles to linear algebra statements.
For instance we can give the following computable version of a criterion for the reducibility
of the triviality index (originally appearing in Sabbah~\cite{Sa}, cor. ?), that we state here only
for the case of a logarithmic modification.

\begin{cor}\label{index:red}
Let $\E\in \RH_{\chi}$ be a weak solution, and consider a linear Fuchsian model at $x\notin\dd$, 
given by $p$ matrices $A_s$ for $s\in \dd$ such that 
$$\sum_{s\in\dd}A_s=\k=\diag(\kk_1I_{n_1},\ldots,\kk_sI_{n_s})$$
where the integers $\kk_i$ satisfy $\kk_i > \kk_{i+1}$, in such a way that
the flag $\hn$ is the flag $0=F_0\subset F_1\subset\cdots\subset F_s=\C^n$ 
having signature $(n_1,\ldots,n_s)$ in the canonical basis of $\C^n$. 
There exists a weak solution $\E'$ which is adjacent to $\E$ at $s\in X$ if and only
if there exists an $A_s$-stable subspace $W\subset \C^n$ such that $W\cap F_1 =(0)$.
\end{cor}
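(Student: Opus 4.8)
The plan is to translate everything to the local picture at the modification point $s\in\dd$ and to track the triviality index through Proposition~\ref{GS}. First I would fix the logarithmic Birkhoff--Grothendieck trivialisation $\F$ of $\E$ at $x$ furnished by Theorem~\ref{BG:logTriv}, set $Y=\Gamma(X,\F)$, and record that in the basis underlying the given Fuchsian model the connection has residue $A_s$ at each $s\in\dd$, with $\sum_{s\in\dd}A_s=\k$ and, by part~(2) of Theorem~\ref{BG:logTriv} together with the normalisation in the statement, with $0=F_0\subset F_1\subset\dots\subset F_s=\C^n$ the image in $Y$ of the Harder--Narasimhan filtration of $\E$, $F_1$ being the $\kk_1$-eigenspace of $\k$ (the maximal-slope piece). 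Since $\F$ agrees with $\E$ off $x$ one has $\F_s=\E_s$ and $\r_s^{\F}\na=\r_s\na$ for $s\in\dd$, and evaluation at $s$ identifies $Y$ with $E_s:=\E_s/\id_s\E_s$, carrying $A_s$ to $\r_s\na$ (the sign being immaterial for stability) and, by Lemma~\ref{localHNF}, carrying $F_\bullet$ to the flag of $\E$ in $E_s$ induced by $\hn(\E)$. From now on I identify $\C^n$ with $E_s$ through this isomorphism.

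Next I would describe the adjacent weak solutions. A weak solution $\E'$ adjacent to $\E$ at $s$ coincides with $\E$ off $s$ and has $[\E'_s]$ adjacent to $[\E_s]$; rescaling $\E'_s$ by a power of $\id_s$ changes neither the weak-solution property nor the triviality index (it amounts to twisting by a line bundle), so we may take $\E'=\E^{\la'}$ with $\id_s\E_s\subsetneq\la'\subsetneq\E_s$, and set $W:=\la'/\id_s\E_s$, a subspace with $0\subsetneq W\subsetneq E_s$. By Lemma~\ref{coco} the induced connection extends $\na$ over $X\setminus\{s\}$, hence has monodromy $\chi$ and is logarithmic away from $s$; and by Lemma~\ref{adj:log} it is logarithmic at $s$ exactly when $W$ is $\r_s\na$-stable, that is, $A_s$-stable. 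Thus the weak solutions adjacent to $\E$ at $s$ are precisely the $\E^{\la'}$ with $\la'$ the preimage under the reduction $\pi_s\colon\E_s\to E_s$ of a proper non-zero $A_s$-stable subspace $W\subset\C^n$.

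Finally I would compute the triviality index with Proposition~\ref{GS} applied at $s$, with the flag $F_\bullet$ and the subspace $W$. If $\E$ has type $(\kk_1^{n_1},\dots,\kk_s^{n_s})$ with $\kk_1>\dots>\kk_s$, then $\E^{\la'}$ has type obtained by keeping $\kk_i$ with multiplicity $m_i=\dim(F_i\cap W)-\dim(F_{i-1}\cap W)$ and lowering the remaining $n_i-m_i$ copies to $\kk_i-1$. Using $\sum_i m_i=\dim W$, a short computation then gives $i(\E^{\la'})=i(\E)-\dim W$ when $W\cap F_1=0$ (then $m_1=0$ and the largest entry drops to $\kk_1-1$), and $i(\E^{\la'})=i(\E)+(n-\dim W)>i(\E)$ when $W\cap F_1\neq0$ (the largest entry stays $\kk_1$), the inequality being strict since $W$ is proper. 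Hence a weak solution adjacent to $\E$ at $s$ with strictly smaller triviality index exists if and only if $\C^n$ admits a proper non-zero $A_s$-stable subspace meeting $F_1$ trivially; and since $F_1\neq0$, any non-zero $A_s$-stable $W$ with $W\cap F_1=0$ is automatically proper, which gives the asserted criterion. The one genuinely delicate step is the flag identification of the first paragraph — it is what legitimises feeding Proposition~\ref{GS} the flag $F_\bullet$ as the Harder--Narasimhan flag of $\E$ in $E_s$; the index bookkeeping and the rest are routine.
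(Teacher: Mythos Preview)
Your proof is correct and follows essentially the same approach as the paper: characterise adjacent weak solutions at $s$ via $A_s$-stable subspaces $W$ using Lemma~\ref{adj:log}, then compute the change in triviality index via Proposition~\ref{GS}, splitting into the cases $W\cap F_1=0$ and $W\cap F_1\neq 0$. You are in fact more careful than the paper in two respects: you make explicit the identification of the Harder--Narasimhan flag in $Y$ with the one in $E_s$ needed to apply Proposition~\ref{GS} at $s$, and you correctly read the statement as asserting the existence of an adjacent weak solution \emph{with strictly smaller triviality index} (which is what the paper's proof actually establishes, though the statement omits this qualifier).
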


\begin{proof}
The triviality index of $\E$ is equal to $i(\E)=\sum_{i=1}^s n_i(\kk_1-\kk_i)$.
According to propositions~\ref{GS}, any adjacent
weak solution $\E'$ is given by an $A_s$-stable subspace $W\subset \C^n$.
For any basis $(e)$ of $\C^n$ respecting the flag $\hn$, the bundle $\E'$ has 
type $\k'=\k-T$ where $T_i=0$ when $e_i\in W$ and $T_i=1$ otherwise,
therefore $i(\E')=\sum_{i=1}^n (\max(k_i-T_i)-k_i+T_i)$ where $k_i$ represent the 
elements of $\k$ without multiplicities.
Accordingly, we have $$i(\E)-i(\E')=\sum_{i=1}^n (k_1-T_i-\max(k_i-T_i)).$$
Now, if there exists $i$ such that $k_i=\kk_1$ and $T_i=0$, then $\max(k_i-T_i)=\kk_1$,
thus $i(\E)-i(\E')=\sum_{i=1}^n -T_i <0$ (because we exclude the trivial case $W=\C^n$).
Otherwise we have $\max(k_i-T_i)=\kk_1-1$, and then $i(\E)-i(\E')=\sum_{i=1}^n (1-T_i) >0$.
Therefore $\E'$ exists if and only if there exists $W$ stable under some $A_s$ such that 
$W\cap F_{1}=0$.
\end{proof}

\begin{prop}
Let $\F$ be a Birkhoff-Grothendieck trivialisation of $\D$ at $x\notin \dd$. 
If there exists a flag $F$ in $Y_{\F}$ which is {\em transversal} to $\hn$, and is 
moreover {\em stable} under the action of one of the maps $\psi_s$ for $s\in
\dd$, then the strong Riemann-Hilbert problem has a solution, 
which moreover coincides with $\D$ outside $s$.
\end{prop}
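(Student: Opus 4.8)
The plan is to obtain the strong solution by modifying the Deligne bundle $\D$ at the single point $s$, transporting the hypothesis from the space $Y_{\F}$ of global sections to the local fibre at $s$, where Corollary~\ref{local:log} applies directly.

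First I would move the data to $s$. Since $\F$ is a Birkhoff-Grothendieck trivialisation of $\D$ at $x\notin\dd$, it coincides with $\D$ away from $x$; in particular $\F_s=\D_s=\Delta_s$ is the Deligne lattice at $s$, so $\F$ is logarithmic at $s$ and $\psi_s=\r_s^{\F}\na$ is defined on $Y_{\F}$ by Lemma~\ref{triv1}. As $\F$ is trivial, evaluation at $s$ (that is, passing to global sections in $\F/\F_{-[s]}$) is a canonical isomorphism $\mathrm{ev}_s\colon Y_{\F}\xrightarrow{\,\sim\,}D_s:=\Delta_s/\id_s\Delta_s$; since the residue on both sides is the endomorphism induced by $\na$ on $\F/\F_{-[s]}=\Delta_s/\id_s\Delta_s$, this isomorphism intertwines $\psi_s$ with $\dt_{\Delta_s}=\r_{\Delta_s}\na$. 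Hence $\bar F:=\mathrm{ev}_s(F)$ is a $\dt_{\Delta_s}$-stable flag in $D_s$.

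Next I would show that $\bar F$ is the flag induced by the stalk of some Birkhoff-Grothendieck trivialisation of $\D$ at $s$. By the third assertion of Lemma~\ref{localHNF}, applied at $s$ to $\E=\D$, every flag of $D_s$ transversal to $\pi_s^{\D}(\hn(\D)_s)$ is realized as $F_{\Delta_s}(\F'_s)$ for some Birkhoff-Grothendieck trivialisation $\F'$ of $\D$ at $s$, so it suffices to check that $\bar F$ is transversal to $\pi_s^{\D}(\hn(\D)_s)$. This is where the hypothesis enters: one verifies that $\mathrm{ev}_s$ carries the Harder--Narasimhan flag $\hn$ of $Y_{\F}$ onto $\pi_s^{\D}(\hn(\D)_s)$. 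Indeed, a section lying in a step of $\hn$ in $Y_{\F}$ is, being a global section of $\D$ that lies in the corresponding step of the local Harder--Narasimhan flag at $x$ and $\hn(\D)$ being a subbundle, a global section of that step of $\hn(\D)$; its value at $s$ therefore lands in $\pi_s^{\D}(\hn(\D)_s)$, and since $\mathrm{ev}_s$ is bijective while the dimensions of the steps of $\hn$ in $Y_{\F}$ equal the ranks of the steps of $\hn(\D)$ (first assertion of Lemma~\ref{localHNF}), the two flags correspond. Transversality of $F$ to $\hn$ thus transfers to transversality of $\bar F$ to $\pi_s^{\D}(\hn(\D)_s)$, so Lemma~\ref{localHNF} provides a Birkhoff-Grothendieck trivialisation $\F'$ of $\D$ at $s$ with $F_{\Delta_s}(\F'_s)=\bar F$, a flag stable under $\dt_{\Delta_s}=\r_s^{\D}\na$.

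Finally, Corollary~\ref{local:log} applied at $s$ (where $\D_s=\Delta_s$ indeed) yields a Birkhoff-Grothendieck trivialisation $\ti{\F}$ of $\D$ at $s$ that is moreover logarithmic at $s$. Being a trivialisation of $\D$ at $s$, $\ti{\F}$ is trivial and coincides with $\D$ outside $s$; it is therefore logarithmic with respect to $\na$ at every point of $\dd$ (at $s$ by construction, elsewhere because it agrees there with the logarithmic bundle $\D$), and it carries the connection $\na$ with the prescribed monodromy $\chi$. Hence $\ti{\F}$ is a strong solution to the Riemann--Hilbert problem that coincides with $\D$ outside $s$. The main obstacle is the middle step: the transversality condition is phrased in $Y_{\F}$ through the chosen trivialisation \emph{at $x$}, whereas the modification is carried out \emph{at $s$}, so one must match, inside the single space $Y_{\F}$, the Harder--Narasimhan flag with the $\mathrm{ev}_s$-image of the local Harder--Narasimhan data at $s$; this rests on the compatibility of the global form $Y_{\F}$ with the lattice structures at both $x$ and $s$, together with Lemma~\ref{localHNF}. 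Once this matching is in place, the rest is formal.
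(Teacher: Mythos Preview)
Your argument is correct and follows essentially the same route as the paper's: transfer the flag $F$ from $Y_{\F}$ to $D_s=\D_s/\id_s\D_s$, invoke Lemma~\ref{localHNF}\,\ref{localHNFiii}) to realise it as the flag induced by some Birkhoff--Grothendieck trivialisation of $\D$ at $s$, and then pass to a logarithmic Birkhoff--Grothendieck trivialisation via the Birkhoff form and the permutation lemma. You package this last step as an application of Corollary~\ref{local:log}, whereas the paper unfolds that corollary's proof directly, but the content is identical.

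If anything, your write-up is more careful than the paper's on one point: Lemma~\ref{localHNF}\,\ref{localHNFiii}) requires the flag to be transversal to $\pi_s^{\D}(\hn(\D)_s)$ in $D_s$, and you explicitly verify that $\mathrm{ev}_s$ carries $\hn\subset Y_{\F}$ onto this local Harder--Narasimhan flag (using that the transported basis $(\sigma)$ is a Birkhoff--Grothendieck basis of $\D_s$ for $t_s(\F)$). The paper takes this identification for granted.
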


\begin{proof}
Let $F$ be a flag of $Y_{\F}$, which is stable under $\psi_s$. Taking stalks at $x$ of
a $\C$-basis of $F$, we can see the flag $F$ in $D=\D_s/\id_s\D_s$. According to 
lemma~\ref{localHNF}, \ref{localHNFiii}), there exists a Birkhoff-Grothendieck
trivialisation $\E$ of $\D$ at $x$, whose image in $D=\D_s/\id_s\D_s$ is $F$.
Let $(e)$ be a Birkhoff-Grothendieck basis of $\D_s$ with respect to $\E_s$. 
Consequently, its image in $D$ respects the flag $F$. Let $\Upsilon$ be a
Birkhoff form of $\D_s$, and let $(e_{\Upsilon})$ be the $\Upsilon$-basis
of $(e)$. Since the gauge from $(e)$ to $(e_{\Upsilon})$ is tangent to $I$, 
the lattice $M$ induced from $(e_{\Upsilon})$ by the elementary
divisors $K$ of $\E_s$ in $\la$ is also a trivialising
Birkhoff-Grothendieck lattice for $\D$ at $s$. However, the lattice
$M$ is also logarithmic, since by construction it induces in $D$ the
$\psi_s$-stable flag $F$, and moreover sits inside an apartment
that contains the Birkhoff form $\Upsilon$. Hence, the bundle $\D^M$ is both
trivial and logarithmic.
\end{proof}

We have represented the weak solutions to the Riemann-Hilbert problem
as points in a product of subvarieties of stable flags.

\begin{thm}
\label{weak:flags}
Let $\D$ be the Deligne bundle, and $\F$ a Birkhoff-Grothendieck 
trivialisation at an apparent singularity $x\notin \dd$.
The set of weak solutions to the Riemann-Hilbert problem for $\chi$
is parameterised by the set 
$$\RH_{\chi}=\{(F_s,K_s)_{s\in \dd} \tq F_s \in \fl_{\psi_s}(Y), K_s
\in \Z^n(F_s)\},$$
where $Y=\Gamma(X,\F)$ and $\psi_s=\r_s^{\F}\na\in \End_{\C}(Y)$ for $s\in \dd$.
\end{thm}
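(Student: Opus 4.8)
The plan is to split the desired bijection into two independent steps: realising weak solutions as families of logarithmic lattices concentrated at the points of $\dd$, and then describing each such local lattice as a flag of the local fibre stabilised by the residue together with a compatible weight sequence, after transporting everything to the space $Y=\Gamma(X,\F)$ via the trivialisation $\F$.

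First I would invoke Proposition~\ref{weak:sol}: every weak solution is holomorphically isomorphic to $\D^M$ for some finite set $S\subset X$ and lattices $M_x\in\ra_x$ ($x\in S$). The point to establish is that $S$ may be taken inside $\dd$. For $x\notin\dd$ the monodromy is trivial, so $x$ is an apparent singularity, $\D_x$ is the Deligne lattice there, and Proposition~\ref{Deligne:hol} says $\D_x$ is the \emph{only} lattice of $\ra_x$ carrying a holomorphic connection matrix; a modification at such an $x$ would create a genuine pole and violate the requirement that the singular set be $\dd$. Hence every weak solution $\E$ (seen inside $(\V,\na)$) satisfies $\E_x=\D_x$ for $x\notin\dd$ and $\E_s\in\ra_{s,\log}$ for $s\in\dd$, the latter being exactly the defining condition of a weak solution. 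Conversely, given $M_s\in\ra_{s,\log}$ for each $s\in\dd$, Lemma~\ref{coco} produces a unique $\D^M\in H$ with its canonical connection $\na^M$, which is logarithmic with singularities on $\dd$ and, sharing its horizontal sections with $\na$, still realises $\chi$. This yields a bijection between weak solutions and $\prod_{s\in\dd}\ra_{s,\log}$, via $\E\mapsto(\E_s)_{s\in\dd}$.

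Next I would apply Proposition~\ref{bij:lattices} at each $s\in\dd$: choosing a Birkhoff form $\Upsilon_s$ of $\D_s$, the set $\ra_{s,\log}$ is in bijection with $W_0(\Upsilon_s)$, i.e.\ with the pairs $(F_s,K_s)$ where $F_s$ is a flag of the local fibre $D_s=\D_s/\id_s\D_s$ stable under the local residue $\dt_{\D_s}=\r_{\D_s}\na$ and $K_s\in\Z^n(F_s)$. It then remains to transfer this description from $D_s$ to $Y$. Since $\F$ is trivial, $\dim_\C Y=n$ (Lemma~\ref{triv1}); since $\F$ differs from $\D$ only at $x\neq s$, we have $\F_s=\D_s$, and for a trivial bundle the evaluation map $\mathrm{ev}_s\colon Y\to\F_s/\id_s\F_s=D_s$ is a $\C$-linear isomorphism. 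The crucial compatibility to check is that $\mathrm{ev}_s$ intertwines $\psi_s=\r_s^{\F}\na$ with $\dt_{\D_s}$, which holds because in a global basis of sections of $\F$ both residues are read off from the residue of the same local connection matrix at $s$. Granting this, $\mathrm{ev}_s$ matches flags of $Y$ stable under $\psi_s$ with flags of $D_s$ stable under $\dt_{\D_s}$ and preserves admissible weight sequences, so $W_0(\Upsilon_s)$ is in bijection with $\{(F_s,K_s)\tq F_s\in\fl_{\psi_s}(Y),\ K_s\in\Z^n(F_s)\}$. Composing the two steps and taking the product over $s\in\dd$ produces the asserted parameterisation by $\RH_\chi$.

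I expect the main obstacle to be the two ``only if'' directions in the first step — that no modification of $\D$ occurs away from $\dd$, and that the logarithmic condition at the points of $\dd$ is exactly what a weak solution imposes — for which Proposition~\ref{Deligne:hol} is precisely the tool; the residue compatibility in the second step is the other point requiring care, but it is a direct local computation. Everything else is bookkeeping of the bijections already furnished by Proposition~\ref{weak:sol}, Proposition~\ref{bij:lattices} and Lemmas~\ref{coco} and~\ref{triv1}. Finally I would remark that the resulting parameterisation depends on the chosen trivialisation $\F$ only through the obvious relabelling of $Y$ and of the maps $\psi_s$.
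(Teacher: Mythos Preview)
Your proposal is correct and follows exactly the route the paper intends: the theorem is stated in the paper without its own proof block, as a direct consequence of Proposition~\ref{weak:sol} (weak solutions are modifications of $\D$ over a finite set, necessarily contained in $\dd$ by Proposition~\ref{Deligne:hol}), Proposition~\ref{bij:lattices} (logarithmic lattices at each $s$ correspond to residue-stable flags with admissible weight sequences), and Lemma~\ref{triv1} (the residues $\psi_s$ act on $Y=\Gamma(X,\F)$). Your write-up makes explicit the evaluation isomorphism $Y\to D_s$ and its intertwining of $\psi_s$ with the local residue, which the paper leaves implicit; this is a welcome clarification and exactly the bookkeeping the paper has in mind.
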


%%%%%%%%%%%%%%
\subsection{The Type of the Deligne Bundle}
\label{D:type}
%%%%%%%%%%%%%%

The strong version of the Riemann-Hilbert would directly have a
solution if the Deligne bundle were trivial. However, this is not
the case, unless all singular points are apparent, since the exponents
of $\na$ are normalised in such a way
that their sum is non-negative. This means that the type of the
Deligne bundle as a rule is not trivial.
We have seen several ways to characterise this non-triviality. The
{\em type} characterises the isomorphism classes
of holomorphic vector bundles, so it would seem be possible to work
with this sole information. However, we are not in the
right category to do so, since we consider holomorphic bundles with an
embedding in a meromorphic one, denoted with $\V$.
This is the reason for which there are {\em several} trivial bundles
in $\V$. From another point of view, it is not possible
to determine on the sole basis of the sequence $T=(a_1,\ldots,a_n)$,
what the effect of changing the stalk of $\D$ at $x$
will be. Obviously the geometry of the Harder-Narasimhan filtration
will play a decisive role.

%%%%%%%%%%%%%
\subsubsection{Trivialisations of the Deligne Bundle}
\label{TrivDe}
%%%%%%%%%%%%%

Let us examine in further detail the case of the Deligne bundle $\D$.
Let us say that $\dt_i$ is an elementary generator of the homotopy group 
$G=\pi_1(X\backslash \dd, z_0)$, if $\dt_i$ is a closed path based at $z_0$,
having winding number $+1$ around the singularity $s_i$ and 0 around the others.
Let $G_i=\chi(\dt_i)$ and $L_i=\frac{1}{2i\pi}\log G_i$ normalised as for
the Deligne lattice. Let $(\sg_i)$ be a basis of the Birkhoff form $\Upsilon_i$ at $s_i$
described in remark~\ref{bf}, such that the connection has locally as
matrix $\Omega_i=L_i\frac{dz}{z}$, on a neighbourhood, say $D_i$ of $s_i$.
On the other hand, let $D_0$ be a neighbourhood of $z_0$, and consider a
basis $(\sg_0)$ of the local Birkhoff form. According to what precedes, 
$(\sg_0)$ is a basis of local {\em $\na$-horizontal sections} of $\D$ over $D_0$.
One can moreover choose this basis in such a way that the monodromy of $(\sg_0)$
around $s_i$ is exactly given by the matrix $G_i$.

Assume now for simplicity that $x\notin \dd$ is the point at infinity 
$\infty\in \P$, and let $\F$ be a trivialisation of $\D$ at $x$.
In a global basis of sections $(e)$ of the bundle $\F$, there exist
matrices $B_i \in \m(\C)$ and a matrix
$$B(z)=B_0+\cdots+B_t z^t\mbox{ and }C_i\in\Gc\mbox{ for }
1\lsl i\lsl p$$ such that the connection has the following matrix
$$\Omega=\left(\sum_{i=1}^p \frac{C_i^{-1}L_iC_i}{z-s_i}+B(z)\right)dz.$$

\begin{rem}
If the bundle $\F$ is moreover logarithmic at $\infty$ -- which
can be achieved, {\it e. g.} by Plemelj's theorem -- then $B=0$ 
and the residue at infinity $L_{\infty}=-\sum_{i=1}^p C_i^{-1}L_iC_i$ 
is semi-simple with integer eigenvalues ({\it ssie}).
At the cost of a (harmless) global conjugation, we can already assume that
$$L_{\infty}=\diag(b_1I_{n_1},\ldots,b_sI_{n_s})\mbox{ with
}b_1<\ldots<b_s.$$ Note that the sequence $\mathcal{B}=(b_1I_{n_1},\ldots,b_sI_{n_s})$
coincides with the elementary divisors of the stalk $\F_{\infty}$ in $\D_{\infty}$.
\end{rem}

\begin{defn}
We say that $(C_1,\ldots,C_p)\in \Gc^p$ is a {\em normalising
$p$-tuple} for $\chi$ if
$\sum_{i=1}^p C_i^{-1}L_iC_i$ is ssie for some (and therefore any)
normalised logarithms
$L_i$ of the generators $\chi(\ga_i)$ of the monodromy group.
\end{defn}

Normalising $p$-tuples always exist. Putting $t$ as
the coordinate $1/z$ at infinity,
the Taylor expansion of $\na$ at $x=\infty$ has then the follwing nice
expression
\begin{equation}\label{Tinf}
\Omega=-\sum_{k\gsl 0}
\sum_{i=1}^p s_i^k\ti{L}_i t^k\frac{dt}{t}\mbox{ with
}\ti{L}_i=C_i^{-1}L_iC_i.
\end{equation}

We have thus reduced the computation of the type of the Deligne bundle
to the computation of the matrices $C_i$ (the so-called {\em connection matrices}, because
they connect the different local expressions of $\na$ on the local 
Birkhoff forms).
It is however well known that the computation of the connection matrices
is difficult. Any other trivialisation of $\D$ at infinity is given by a {\em monopole gauge}~(\cite{IlYa}), namely
a unimodular polynomial matrix $\Pi\in \G(\C[z])$, that is, a matrix satisfying
$$\Pi=P_0+P_1z+\cdots +P_kz^k\mbox{ such that }\det \Pi(z)={\rm
cst}\in \C^*.$$

\begin{prop}
Given a family of points $s_1,\ldots,s_p\in \C$ and invertible
matrices $C_1,\ldots,C_p\in \Gc$ all having
the same determinant, there exists a monopole gauge $\Pi\in \G(\C[z])$
such that $\Pi(s_i)=C_i$ for
$1\lsl i\lsl p$.
\end{prop}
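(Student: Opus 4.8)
The plan is to combine a crude polynomial interpolation with a correction that uses the common-determinant hypothesis in an essential way. The first step is to normalise the determinant. Put $d=\det C_1=\cdots=\det C_p\in\C^*$ and $C_i'=\diag(d^{-1},1,\dots,1)\,C_i$, so that each $C_i'$ lies in $\SL_n(\C)$. If we can produce a monopole gauge $\Pi'\in\G(\C[z])$ with $\Pi'(s_i)=C_i'$ for all $i$ and $\det\Pi'\equiv 1$, then $\Pi=\diag(d,1,\dots,1)\,\Pi'$ has polynomial entries, constant determinant $d$, and satisfies $\Pi(s_i)=C_i$. Hence we may assume from now on that all $C_i$ lie in $\SL_n(\C)$.

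Next, set $w(z)=\prod_{i=1}^{p}(z-s_i)$ and consider the finite $\C$-algebra $R=\C[z]/(w)$, which by the Chinese Remainder Theorem is isomorphic to $\C^{p}$ via $f\mapsto(f(s_1),\dots,f(s_p))$. Under this isomorphism the data $(C_1,\dots,C_p)$ becomes an element of $\SL_n(R)\cong\prod_{i=1}^{p}\SL_n(\C)$, and the key point is that this element is a product of \emph{elementary transvections} $I_n+r\,\mathfrak{t}_{jk}$ with $r\in R$ and $j\ne k$, where $\mathfrak{t}_{jk}$ denotes the matrix with a single $1$ in position $(j,k)$. Indeed, for a fixed index $i$ the element of $\SL_n(R)$ that equals $I_n+\lambda\mathfrak{t}_{jk}$ in the $i$-th factor and $I_n$ in the others is exactly $I_n+\varepsilon_i\lambda\,\mathfrak{t}_{jk}$, where $\varepsilon_i\in R$ is the idempotent attached to the $i$-th factor. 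Since each $\SL_n(\C)$ is generated by elementary matrices over the field $\C$ (Gaussian elimination), and $(C_1,\dots,C_p)$ is the product of the $p$ tuples that are $C_i$ in slot $i$ and $I_n$ elsewhere, expanding each $C_i$ into elementaries exhibits $(C_1,\dots,C_p)$ as a product $\prod_{l}\bigl(I_n+\bar\mu_l\,\mathfrak{t}_{j_lk_l}\bigr)$ with $\bar\mu_l\in R$ and $j_l\ne k_l$.

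It then remains only to lift this factorisation. For each $l$ choose $\mu_l\in\C[z]$ with $\mu_l\equiv\bar\mu_l\pmod{w}$, concretely by Lagrange-interpolating through the prescribed values of $\bar\mu_l$ at $s_1,\dots,s_p$, and set $\Pi'=\prod_{l}\bigl(I_n+\mu_l\,\mathfrak{t}_{j_lk_l}\bigr)\in\m(\C[z])$. Each factor has polynomial entries and determinant $1$, so $\Pi'\in\G(\C[z])$ with $\det\Pi'=1$; and since reduction modulo $w$ is a ring homomorphism, the image of $\Pi'$ in $\m(R)$ is $\prod_{l}(I_n+\bar\mu_l\,\mathfrak{t}_{j_lk_l})=(C_1,\dots,C_p)$, i.e. $\Pi'(s_i)=C_i'$ for every $i$. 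Undoing the initial normalisation gives $\Pi=\diag(d,1,\dots,1)\,\Pi'$, the required monopole gauge. I expect the only genuine subtlety to be the determinant bookkeeping: a bare interpolant $P\in\m(\C[z])$ with $P(s_i)=C_i$ only satisfies $\det P\equiv d\pmod{w}$, and the hypothesis that the $C_i$ share a determinant is precisely what lets one replace $P$ by a matrix of constant determinant — equivalently, it is what places $(C_1,\dots,C_p)$ in the image of $\diag(d,1,\dots,1)\cdot\SL_n(\C[z]/(w))$, which lifts termwise to $\G(\C[z])$ because elementary matrices do.
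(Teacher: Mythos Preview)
Your proof is correct and follows essentially the same route as the paper: reduce to $\SL_n$ by factoring out the common determinant, write the data as a product of transvections, and lift the transvection parameters to $\C[z]$ by Lagrange interpolation. The only cosmetic difference is that the paper pads the factorisations of the individual $C_i$ with trivial transvections $T_{jk}(0)=I$ so that they all share the same template $T_1(\mu_1^i)\cdots T_s(\mu_s^i)$, whereas you achieve the same effect by working in $\SL_n(R)$ with $R=\C[z]/(w)$ and using the CRT idempotents; both devices amount to the same product of elementary matrices over $R$.
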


\begin{proof}
The group $\SL_n(R)$ on a ring is generated by transvections
$T_{ij}(\lb)=I+\lb E_{ij}$ where $\lb\in R$
and $E_{ij}$ is the $(i,j)$ element of the canonical basis of the
vector space $\mathfrak{gl}_n$. Factoring
out the common value of $\det C_i$, one can assume that the matrices
$C_i$ are in $\SL_n(\C)$, and that they
appear as a product of transvections. At the cost of introducing the
trivial transvections $T_{ij}(0)=I$,
one can even assume that all are factored as a product of the {\em
same} transvections with different parameters
$$C_i=T_1(\mu_1^i)\cdots T_s(\mu^i_s)\mbox{ with }\mu^i_t\in \C.$$
Define then $\lb_k\in\C[z]$ such that $\lb_k(s_i)=\mu_k^i$ for
$1\lsl i\lsl p$. By construction, the product
$\ti{\Pi}=T_1(\lb_1)\cdots T_s(\lb_s)\in \SL_n(\C[z])$ indeed
interpolates
the matrices $C_i$ at the points $s_i$. The general case is obtained
by multiplying $\ti{\Pi}$ by the common value
of $\det C_i$.
\end{proof}

As a consequence of this result, one can find a trivialisation $\E$ at
infinity of the Deligne bundle such that the
residues of the connection $\na$ are expressed in a basis of
$Y=\Gamma(X,\E)$ as the actual matrices
$L_i$ (and not {\em conjugated} to them). Although the
point at infinity of $\E$ is still an apparent singularity,
we have no control on the Poincar\'e rank of $\na$ at $\infty$.

The results of this section also hold
(with the adequate modifications) if the apparent singularity is
assumed to be located at $z_0\not\in\dd\cup\{\infty\}$.
We will refer to the trivialisation $\E$ as an {\em adapted
trivialisation of $\D$ at $z_0$}. 

\begin{rem}
We know that there exists a family of invertible matrices
$(C_i)$ such that $\sum_{i=1}^p C_i^{-1}L_iC_i$ is semi-simple with
integer eigenvalues and that these eigenvalues are equal to the type of the Deligne bundle.
This raises two questions:
\begin{enumerate}
\item Does there exist a logarithmic trivialisation of $\D$ for any such family
$(C_i)$?
\item If there exist several families with this property, how to recognize those
that indeed give the type of the Deligne bundle? 
\end{enumerate}
\end{rem}

%%%%%%%%%%%%
\subsection{Reducibility of the Monodromy Representation}
\label{red}
%%%%%%%%%%%%

We establish now an improvement of a result of Bolibrukh~\cite{BoA} (prop. 4.2.1, p. 84).

\begin{prop}
\label{Boli:irred}
If the representation $\chi$ is irreducible, then for any weak solution 
$\E\in \RH_{\chi}$, the type $(k_1,\ldots,k_n)$ of $\E$
satisfies $k_i-k_j\leq p-2$.
\end{prop}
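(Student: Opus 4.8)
The plan is to realise the weak solution directly as a bundle with logarithmic connection on $\P=\mathbb{P}^1$ and to exploit that the one-forms with logarithmic poles along $\dd$ form a line bundle of degree $p-2$. So let $(\E,\na)$ be a weak solution, with $\na$ logarithmic on $\dd$, $|\dd|=p$, and monodromy $\chi$; we may assume $p\gsl 2$ (the remaining cases being degenerate). Writing $\E\simeq\bigoplus_{i=1}^n\mathcal{O}_{\P}(a_i)$ with $a_1\gsl\cdots\gsl a_n$ (Birkhoff--Grothendieck), the type is $(a_1,\dots,a_n)$ and it suffices to prove $a_1-a_n\lsl p-2$; there is nothing to show if $a_1=a_n$, so $n\gsl 2$. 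Since $\Omega^1_{\P}\simeq\mathcal{O}_{\P}(-2)$, the connection is an additive Leibniz map $\na\colon\E\to\E\otimes\mathcal{O}_{\P}(p-2)$. The single structural input I will use is that, $\chi$ being irreducible, $\E$ carries no $\na$-invariant subbundle $0\subsetneq\mathcal{S}\subsetneq\E$: such an $\mathcal{S}$ would again be logarithmic on $\dd$ and define a proper subrepresentation of $\chi$. (Equivalently, by Theorem~\ref{BG:logTriv} one could trivialise at an apparent point and work with the Fuchsian system $\sum_i A_i\,dz/(z-s_i)$, with $-\sum_i A_i$ semisimple with integer eigenvalues the type, ``no invariant subbundle'' becoming ``the $A_i$ share no invariant subspace''; the sheaf picture is lighter.)

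For the core of the argument, fix a splitting $\E=\bigoplus_i\mathcal{O}_{\P}(a_i)$, let $\E^{\max}=\bigoplus_{a_i=a_1}\mathcal{O}_{\P}(a_1)$ be the maximal-slope subbundle, and let $q\colon\E\twoheadrightarrow\mathcal{Q}=\mathcal{O}_{\P}(a_n)$ project onto a minimal summand with $q|_{\E^{\max}}=0$ (possible as $a_1>a_n$). For any subbundle $\mathcal{S}$ that is not $\na$-invariant, the composite $\mathcal{S}\hookrightarrow\E\xrightarrow{\na}\E\otimes\mathcal{O}_{\P}(p-2)\twoheadrightarrow(\E/\mathcal{S})\otimes\mathcal{O}_{\P}(p-2)$ is $\mathcal{O}_{\P}$-linear (the $df\otimes s$ term vanishes, $s$ mapping to $0$ in $\E/\mathcal{S}$) and nonzero; saturating its image and comparing Harder--Narasimhan slopes already gives that $a_1$ exceeds the next type value below it by at most $p-2$. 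To reach $a_n$ I iterate: pick a global vector field $\tau$ on $\P$ vanishing at two points of $\dd$, so that $\na_\tau\colon\E\to\E\otimes\mathcal{O}_{\P}(p-2)$, and let $\mathcal{W}_k$ be the subsheaf generated by $\E^{\max},\na_\tau\E^{\max},\dots,\na_\tau^{\,k}\E^{\max}$. A $\na_\tau$-invariant meromorphic subbundle is automatically $\na$-invariant ($\tau$ has isolated zeros and $\na$ is $\mathcal{O}$-linear in the vector field), so the absence of invariant subbundles forces $\mathcal{W}_m$ to have generic rank $n$ for some $m\lsl n-1$. Let $m$ be minimal with $q(\mathcal{W}_m)\neq 0$; then $q$ kills $\mathcal{W}_{m-1}$, hence by the Leibniz rule $q\circ\na_\tau^{\,m}|_{\E^{\max}}$ is $\mathcal{O}_{\P}$-linear, and it is nonzero by minimality of $m$. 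Since $\na_\tau^{\,m}$ maps $\E^{\max}$ into $\E\otimes\mathcal{O}_{\P}(m(p-2))$, this is a nonzero morphism $\mathcal{O}_{\P}(a_1)\to\mathcal{O}_{\P}(a_n+m(p-2))$, whence $a_1-a_n\lsl m(p-2)\lsl(n-1)(p-2)$.

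This already yields a Bolibrukh-type inequality; the actual content of Proposition~\ref{Boli:irred} is the sharp constant $p-2$, and obtaining it is where I expect the real work. The plan is to refine the iteration so that no polar twist is accumulated: at the stage where $\mathcal{W}_k$ has image inside a Harder--Narasimhan step $\mathcal{F}^{(j)}\subsetneq\E$, one uses that $\na$ does not preserve $\mathcal{F}^{(j)}$ to cross, in a single application of $\na$ (one copy of $\mathcal{O}_{\P}(p-2)$), onto a strictly lower step, while the Leibniz/$\ker q$ bookkeeping prevents the composed $\mathcal{O}_{\P}$-linear map $\E^{\max}\to(\text{minimal quotient})$ from acquiring more than one $\mathcal{O}_{\P}(p-2)$; following the Harder--Narasimhan slopes down this refined chain should force $a_1-a_n\lsl p-2$. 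If this does not close cleanly, the fallbacks are an induction on $n$ or a finer cohomological argument using more heavily that $(\E,\na)$ is a genuine weak solution. Granting $a_1-a_n\lsl p-2$ and recalling that the type is $(a_1,\dots,a_n)$, we conclude $k_i-k_j=a_i-a_j\lsl a_1-a_n\lsl p-2$ for all $i,j$.
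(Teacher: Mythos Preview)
Your argument establishes $a_1-a_n\leqslant (n-1)(p-2)$, which is the weaker Bolibrukh-type bound, and you candidly acknowledge this. But the proposition claims the sharp bound $p-2$, and your sketch for closing the gap does not work. The second fundamental form of any Harder--Narasimhan step $\mathcal F^{(j)}$ is indeed $\mathcal O$-linear and nonzero, but it is a map $\mathcal F^{(j)}\to(\E/\mathcal F^{(j)})\otimes\mathcal O(p-2)$: a nonzero morphism from $\mathcal O(a_1)$ to a sum of line bundles of degrees at most $a_2+p-2$ only yields $a_1-a_2\leqslant p-2$, a bound on \emph{consecutive} slope gaps, not on $a_1-a_n$. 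Iterating from step to step still accumulates one twist of $\mathcal O(p-2)$ per crossing, and your ``Leibniz/$\ker q$ bookkeeping'' does not prevent this: the composite $q\circ\nabla_\tau^{\,m}|_{\E^{\max}}$ genuinely lands in $\mathcal O(a_n+m(p-2))$, and there is no mechanism in your outline forcing $m=1$.

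The paper's proof is quite different and gets the sharp constant directly. Working in a logarithmic Birkhoff--Grothendieck trivialisation at $\infty\notin\dd$ (your parenthetical Fuchsian picture), one writes $\Omega=\sum_{a\in\dd}\widetilde L_a\,dz/(z-a)$ with $\sum_a\widetilde L_a=K=\mathrm{diag}(k_i)$. Expanding at $t=1/z$ and using that the shearing $t^{-K}$ must kill the singularity at $\infty$ (since $\E_\infty$ is the Deligne lattice), the $(\ell,m)$-block with $\ell>m$ is forced to satisfy the Vandermonde system $\sum_{a\in\dd}a^j(\widetilde L_a)_{\ell,m}=0$ for $0\leqslant j\leqslant k_m-k_\ell$. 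If $k_m-k_\ell\geqslant p-1$ this is a full-rank $p\times p$ Vandermonde (the $s_i$ being distinct), so all blocks $(\widetilde L_a)_{\ell,m}$ vanish simultaneously, giving a common invariant subspace and contradicting irreducibility. The sharp $p-2$ thus comes from the Vandermonde rank count, an ingredient with no analogue in your sheaf-theoretic iteration.
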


\begin{proof}
Assume here for simplicity that $x=\infty\notin \dd$, and consider
again the setting of section \ref{TrivDe}. Let $\E$ be any weak solution to Riemann-Hilbert, 
and $\F$ be a logarithmic Birkhoff-Grothendieck trivialisation
of $\E$ at $x$. Let $K=(k_1,\ldots,k_n)$ be the type of $\E$.
In a basis $(e)$ of global sections of $\F$, there exist constant matrices $\ti{L}_a$
for $a\in \dd$ such that the connection $\na$ has in $(e)$ the following matrix

\begin{equation}
\label{FS}
\Omega=\sum_{a\in\dd} \frac{\ti{L}_a}{z-a}dz=-\frac{dt}{t}\sum_{k\gsl 0}\Omega_k t^k
\mbox{ with }\Omega_k=\sum_{a\in\dd} a^k\ti{L}_a\mbox{ and }t=\frac{1}{z}.
\end{equation}

By lemma~\ref{Deligne:hol}, the shearing $t^{-K}$ suppresses the singularity at $x$, since the
basis $t^{-K}(e)$ spans the Deligne lattice. As a consequence,
$\ti{\Omega}=t^{K}\Omega(t)t^{-K}+
K\frac{dt}{t}$ must satisfy $v(\ti{\Omega})\geq 0.$ Therefore, the
residue matrix $B=-\sum_{a\in\dd}\ti{L}_a$ of $\Omega$ at $x$ is diagonal and equal to $-K$. 
We can assume further that $$B=\diag(b_1 I_{n_1},\ldots,b_s I_{n_s})\mbox{ with }b_1=-k_1<\cdots <b_s=-k_s$$ 
where $(k_1I_{n_1},\ldots,k_sI_{n_s})$ represents the type of $\E$ with multiplicities. 
Partition any matrix $M$ according to the eigenvalue
multiplicities of $B$, as $(M_{\ell,m})$ for $1\leq \ell,m \leq s$.
Then the matrix of the connection can be rewritten by blocks as
$$\ti{\Omega}_{\ell,m}=\Omega_{\ell,m}t^{k_{\ell}-k_m}+K\frac{dt}{t}=\left(-\sum_{j\geq
0}\Omega^{(j)}_{\ell, m}
t^{j+k_{\ell}-k_m}+\delta_{\ell,m}k_{\ell}I_{n_{\ell}}\right)\frac{dt}{t}.$$
For each $(\ell,m)$ block, this series must have strictly positive
valuation. 
The sum $\sum_{a\in\dd}\ti{L}_a=K$ imposes conditions on all blocks of
the residues $\ti{L}_a$, while when $\ell>m$ we get the following equations.
\begin{equation}
\Omega^{(j)}_{\ell,m}=\sum_{a\in\dd} a^j(\ti{L}_a)_{\ell,m}=0\mbox{ for }0\leq j\leq
k_m-k_{\ell}\mbox{ when }\ell>m.\label{e2}
\end{equation}

For a fixed pair $(\ell,m)$, let $k=\max(0,k_m-k_{\ell})$, and let $X_i\in
\C^{n_{\ell}\times n_m}$ be the $(\ell,m)$-block of the matrix $\ti{L}_{s_i}$,
for $1\leq i\leq p$. For $1\leq \alpha\leq n_{\ell}$ and $1\leq
\beta\leq n_{m}$, let $v_{\alpha,\beta}\in \C^p$ be the
vector constructed by taking the coefficient of index $(\alpha,\beta)$
of $X_i$, for $1\leq i \leq p$.
Then, the equations (\ref{e2}) can be reformulated as
$$v_{\alpha,\beta} \in \ker M_k(\underline{s})\mbox{ where }
M_k(\underline{s})=\left(
\begin{array}{ccc}
1 & \cdots & 1 \\
s_1 & \cdots & s_p \\
\vdots &    & \vdots \\
s_1^k & \cdots & s_p^k
\end{array}
\right).
$$

The matrix $M_k(\underline{s})$ is an upper-left submatrix of a
Vandermonde matrix with coefficients
$$\underline{s}=(s_1,\ldots,s_p)\in \C^p\backslash
\bigcup_{i\neq j}\{x_i\neq x_j\}.$$ Since all the
$s_i$ are distinct, this matrix has always full rank. In particular,
as soon as $k_m-k_{\ell}\geq p-1$, it
has a null kernel, and so all the blocks $X_i$ are zero. Due to the
ordering of the $k_i$, we also have
$k_{m'}-k_{\ell'} \gsl p-1$ for $m'\lsl m$ and $\ell'\gsl \ell$, thus all matrices
$\ti{L}_a$ have a lower-left common zero block.
This means that the representation $\chi$ is reducible.
\end{proof}

%%%%%%%%%%%%%%%%%%
\subsection{Testing the Solubility of the Riemann-Hilbert Problem}
\label{RH:sol}
%%%%%%%%%%%%%%%%%

In this section, we apply the results of this paper to the
experimental investigation of the solubility of the Riemann-Hilbert
problem. We present two ways to search the space of weak solutions,
which are completely effective (up to the known problem of connection
matrices): one that follows paths of adjacent logarithmic lattices,
based on lemma~\ref{adj:log}, the other that uses the characterisation
as stable flags given in proposition~\ref{bij:lattices}. Note that, if
any (not necessarily logarithmic) trivial holomorphic bundle of the meromorphic solution to
Riemann-Hilbert is explicitly given, the procedures that we present,
coupled with classical Poincar\'e rank reduction methods,
implemented on a computer algebra system, allow to make the actual
computations. We however do not know if this bypasses the problem of
the connection matrices.

Let $\D$ be the Deligne bundle of the representation $\chi$. Let
$x\notin \dd$, and consider a logarithmic Birkhoff-Grothendieck
trivialisation $\F$ of $\D$ at $x$. Let $Y=\Gamma(X,\F)$ and choose a
basis $(\sg)$ of $Y$ in which the residue matrix at $x$ is equal to the diagonal
that represents the type of $\D$
$$\mat(\r_{x}^{\F}\na,(\sg))=-\k=\diag(-k_1I_{n_1},\ldots,-k_{s}I_{n_s})\mbox{ where }k_1>\cdots>k_s.$$
In the basis $(\sg)$, the connection has a matrix of the
form (\ref{FS}), and the Harder-Narasimhan filtration is expressed as the
flag $\hn_Y$ of signature $(n_1,\dots,n_s)$ of $Y$.
Let $V=\Gamma(X,\V)$ be the $\mf{K}$-vector space of meromorphic sections of $\V$, where
$\mf{K}=\Gamma(X,\mx)$ is the field of meromorphic functions on $X$. 

For $s\in \dd$, let $t$ be a coordinate at $x$ with divisor $(t)=x-s$, and $(\hsg)=t^{-\k}(\sg)$.
Recall that $t^{-1}$ is a coordinate at $s$. For clarity's sake, we will put $t_x=t$ and $t_s=t^{-1}$
when we are dealing with local sections.
Let $\ti{\F}=t_s(\F)$ be the transport of $\F$ at $s$ and $\ti{Y}=\Gamma(X,\ti{\F})$.
We regard $Y$ and $\ti{Y}$ as sub-$\C$-vector spaces of $V$, spanned respectively by 
the $\mf{K}$-bases $(\sg)$ and $(\hsg)$ of $V$. 
The relation $(\hsg)=t^{-\k}(\sg)$ induces a well-defined fixed isomorphism between $Y$ and $\ti{Y}$.
\begin{enumerate}[{\bf Claim} 1:]
\item The trivial bundle $\ti{\F}$ is a Birkhoff-Grothendieck trivialisation
of $\D$ at $s$.
\item The flag $\hn_{\ti{Y}}$ is the flag of signature $(n_1,\dots,n_s)$ spanned by $(\hsg)$.
\item The germ $(\sg_s)$ of the global basis of $Y$ at
$s$ is a local basis of $\D_s$. 
\end{enumerate}

Indeed, we have the two dual schematic representations,
where $(\sg_x):\E_x$ means that $(\sg)$ is a local basis of $\E$ at $x$ and $(\sg):Y$ means that 
$(\sg)$ is a global basis of the form $Y$
$$\appname{(\hsg_x):\D_x}{(\sg):Y}{t_x^{\k}}\mbox{ and }\appname{(\sg_s):\D_s}{(\hsg):\ti{Y}}{t_s^{\k}}.$$

%%%%%%%%%%%%%%%%%%
\subsubsection{Adjacent Lattices}
\label{RH:sol1.1}
%%%%%%%%%%%%%%%%%

In this section, we consider more generally a weak solution $\E\in \RH_{\chi}$.
In the following proposition, we describe a procedure which allows to read off
at an apparent singularity $x\notin \dd$, fixed once and for all, the effect
on the weak solution $\E$ of a change of logarithmic adjacent 
lattice at any singularity $s\in \dd$. More precisely, let $(\sg)$ be a global basis
of a logarithmic Birkhoff-Grothendieck trivialisation of $\E$ at $x$,
and $\Omega$ the matrix in Fuchsian form~(\ref{FS}) of the connection $\na$ in $(\sg)$,
whose residue at $x$ gives precisely the type of $\E$. Let $M$ be a logarithmic lattice at $s$ 
that is adjacent to $\E_s$. We determine explicitly a gauge transform $\Pi_M$
which is a monopole at $x$, such that $\Omega_{[\Pi_M]}$ has again Fuchsian form~(\ref{FS}).
From its semi-simple residue at $x$ we read directly the type of the
modified bundle $\E^M$, equal to the eigenvalues, and the
Harder-Narasimhan filtration of $\E^M$, spanned by the eigenspaces ordered by increasing values.

This procedure is completely effective once the connection matrices $C_s$
that relate the {\em local} residue matrices $L_s=\frac{1}{2i\pi}\log G_s$ in the 
Birkhoff form at $s$ and the {\em global} residue matrices $\ti{L}_s=C_s^{-1}L_sC_s$
in the basis $(\sg)$, have been determined.

Let $M$ be a lattice at $s$ that is adjacent to $\E_s$. This
lattice is uniquely characterised by its image $W=M/\id_s\E_s$, 
that can be seen as a sub-$\C$-vector space $W\subset Y$.
It is logarithmic if and only if $W$ is
stable under the map $\r_s^{\E}\na$. 

According to proposition~\ref{GS}, a Birkhoff-Grothendieck trivialisation of $\E^M$
is obtained from a basis of $\E_s$ that simultaneously
respects the space $W$ and the flag $\hn$. Moreover, we can choose
$(\vep)$ in the $\Gc$-orbit of $(\sg)$.

\begin{enumerate}[{\bf Claim} 1:]
\setcounter{enumi}{3}
\item There exists a basis $(\vep)$ of $Y$ such that $t_s^{\k}(\vep)$ spans a
Birkhoff-Grothendieck trivialisation of both $\E$ and $\E^M$ at $s$.
\item The matrix $P\in\Gc$ of the basis change from
$(\sg)$ to $(\vep)$ is $(-\k)$-parabolic.
\item The gauge $t_s^{-\k}Pt_s^{\k}=t_x^{\k}Pt_x^{-\k}$ is a monopole at $s$ and an element of $\G(\h_x)$.
\end{enumerate}

$$
\xymatrix{
(\hsg):\ti{Y} \ar[r]^-{t_s^{-\k}Pt_s^{\k}}& (\ti{\vep}):Y' \\
(\sg):\E_s \ar[u]^-{t_s^{\k}} \ar[r]^-{P} \ar[d]^-{\pi} & (\vep):\E_s
\ar[u]^{t_s^{\k}} \ar[r]^-{t_s^{T}} \ar[d]^-{\pi} & (\sg'):M \ar@/_.5pc/[ul]_-{t_s^{\k-T}}\\
E=\E_s/\id_s\E_s \ar[r]^-{P} & W
}
$$
\begin{enumerate}[{\bf Claim} 1:]
\setcounter{enumi}{6}
\item The basis $(\sg')$ generates $M$ at $s$ and $\E_y$ at $y\neq x$.
\item The trivial bundle $\F'$ spanned by $(\sg')$ is a Birkhoff-Grothendieck trivialisation of $\E^M$ at $x$.
\item The gauge transform from $(\sg)$ to $(\sg')$ is $Pt_s^T=Pt_x^{-T}$.
\item The Harder-Narasimhan filtration of $\E^M$ is given by the
flag of $Y'$ spanned by $(\sg')$ according to $\k-T$.
\end{enumerate}
Indeed, the last arrow on the right implies that at $x$, we have $$\xymatrix{(\ti{\vep}_x):\E_x=\E^M_x \ar[r]^-{t_x^{\k-T}} & (\sg'):Y' }
\mbox{ where }Y'\subset V\mbox{ is spanned over }\C\mbox{ by }(\sg').$$
Therefore the type of $\E^M$ is, as expected, equal to $\k-T$.

\begin{prop}
\label{gauge}
Assume that $\dd\subset \C$ and $x=\infty$. Let $\E\in\RH_{\chi}$ be a weak solution
to the Riemann-Hilbert problem. Let the connection $\na$ have a matrix
$\Omega$ of the form~(\ref{FS}) in a basis $(\sg)$ of a logarithmic Birkhoff-Grothendieck
trivialisation $\F$ of $\E$ at $x$.
Then, for any $\ti{L}_s$-stable subspace $W_s$
of $\C^n$, there exists a computable monopole gauge $\Pi\in
\G(\C[z])$, a constant matrix $P_0\in \Gc$ and a diagonal matrix $T$
with only $0,1$ elements such that
$\Omega_{[P_0(z-s)^T\Pi]}$ has again a form~(\ref{FS}) corresponding to
the modification $\E^M$, where $M$ is the lattice of $\V_s$ adjacent to $\E_s$
canonically defined by $W_s$.
\end{prop}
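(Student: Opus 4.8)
The plan is to realise $\E^M$ by composing three gauge transforms matching the three factors of $P_0(z-s)^T\Pi$, the last one restoring the Fuchsian shape~(\ref{FS}) at $x$. First, the $\ti{L}_s$-stable subspace $W_s\subset\C^n\simeq Y$ determines, through the local picture at $s$, a lattice $M\in\ra_s$ with $\id_s\E_s\subset M\subset\E_s$ and $M/\id_s\E_s=W_s$; by lemma~\ref{adj:log} the $\ti{L}_s$-stability of $W_s$ makes $M$ logarithmic, so $\E^M$ is again a weak solution, singular exactly on $\dd$. It therefore suffices, by theorem~\ref{BG:logTriv}, to produce a Birkhoff--Grothendieck trivialisation of $\E^M$ at $x$ which is moreover logarithmic at $x$: in a global basis of its sections $\na$ is logarithmic on $\dd\cup\{x\}$ and regular elsewhere, hence (as in the proof of theorem~\ref{BG:logTriv}, a $1$-form on $\P$ with at most simple poles on $\dd\cup\{\infty\}$ having no polynomial part) its matrix has the form~(\ref{FS}), with residue at $x$ equal to minus the type of $\E^M$.

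The first two factors are exactly Claims~4--10 above. Namely, as in Claims~4--5 one picks a constant $(-\k)$-parabolic $P_0\in\Gc$ carrying the germ $(\sg_s)$ to a basis $(\vep_s)$ respecting simultaneously $W_s$ and the Harder--Narasimhan flag $\hn$ of $\E$ in $Y$ --- possible since $W_s$ is $\ti{L}_s$-stable and $(\sg)$ already respects $\hn$, whose parabolic stabiliser is the $(-\k)$-parabolic group. Then, with $T=\diag(t_1,\ldots,t_n)$ given by $t_i=0$ if the image of $\vep_i$ in $E=\E_s/\id_s\E_s$ lies in $W_s$ and $t_i=1$ otherwise, proposition~\ref{GS} (Claims~6--10) shows that the global meromorphic sections $(\sg')=(z-s)^T(\vep)$ --- well defined because $z-s$ is a global function with divisor $s-\infty$ --- span a trivial bundle $\F'$ which is a Birkhoff--Grothendieck trivialisation of $\E^M$ at $x$, of type $\k-T$, whose Harder--Narasimhan filtration is read off from $(\sg')$ according to $\k-T$. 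At this point the gauge from $(\sg)$ to $(\sg')$ is $P_0(z-s)^T$.

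The bundle $\F'$ need not be logarithmic at $x$, and one last monopole gauge at $x$ repairs this: it is the $\Pi$ of the statement. Since $x\notin\dd$ the monodromy at $x$ is trivial, so $\r_x^{\D}\na=0$ and every flag of $D=\D_x/\id_x\D_x$ is $\r_x^{\D}\na$-stable; by corollary~\ref{local:log} there is a Birkhoff--Grothendieck trivialisation $\ti{M}$ of $\E^M$ at $x$ which is moreover logarithmic at $x$, namely the $\Upsilon$-lifting of the flag induced in $D$ by $\F'_x$ together with its elementary divisors $\k-T$, where $\Upsilon$ is the (unique, the singularity being apparent) Birkhoff form of $\D_x$; corollary~\ref{cor:formChange} ensures that this $\Upsilon$-lifting is again a Birkhoff--Grothendieck trivialisation. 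The gauge $\Pi$ from the global basis $(\sg')$ of $\F'$ to a global basis of $(\E^M)^{\ti{M}}$ changes only the stalk at $x=\infty$, hence is holomorphic and invertible on $\C$ with constant determinant, i.e. $\Pi\in\G(\C[z])$; it is computable, the polynomial $\Upsilon$-lifting gauge being given by the Birkhoff normalisation~(\ref{G}) truncated as in lemma~\ref{poly:latt} once the $\ti{L}_a$ are known. In the resulting basis $\na$ is logarithmic on $\dd\cup\{x\}$ and regular elsewhere, so $\Omega_{[P_0(z-s)^T\Pi]}$ has the form~(\ref{FS}), with residue at $x$ equal to $-(\k-T)$ up to reordering, i.e. to the type of $\E^M$; and the corresponding bundle agrees with $\E$ away from $\{x,s\}$ and equals $M$ at $s$, so it is the announced modification.

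The step I expect to be the real obstacle is the last one: verifying carefully that the passage from $\F'$ to its logarithmic $\Upsilon$-lift is a \emph{monopole} gauge supported at $\infty$ --- holomorphic and invertible on $\C$ with constant determinant, hence a genuine element of $\G(\C[z])$ --- and that post-composition with it makes the connection matrix \emph{exactly} of the Fuchsian form~(\ref{FS}) with the predicted semisimple residue at $x$, rather than something carrying spurious polynomial terms. The construction of $P_0$ and $T$ and the appeal to proposition~\ref{GS} is, by contrast, bookkeeping with the parabolic and admissibility conditions already laid out in Claims~4--10.
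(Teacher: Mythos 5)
Your proof is correct and follows essentially the same route as the paper's: build $M$ from the $\ti{L}_s$-stable $W_s$, use Claims~4--10 (Proposition~\ref{GS}) to realise $P_0(z-s)^T$ as the gauge to a Birkhoff--Grothendieck trivialisation $\F'$ of $\E^M$ at $x$, and then repair the non-logarithmic stalk at $x$ by a final monopole obtained from the Birkhoff form --- you package this last step via Corollaries~\ref{local:log} and~\ref{cor:formChange}, while the paper invokes Proposition~\ref{poly:form} and the permutation lemma directly, but the underlying mechanism is identical. Your monopole argument (agreement of $\F'$ and $(\E^M)^{\ti{M}}$ outside $\infty$ forces polynomial entries and constant determinant) is exactly what settles the worry you flag at the end, and if anything your write-up is cleaner than the paper's proof, which contains a small notational slip (it writes $(z-s)^{K-T}(\vep)$ for $(\sg')$ and calls $\F'$ a trivialisation ``at $s$'', whereas, as Claim~9 and your version have it, the correct gauge is $P(z-s)^{T}$ and $\F'$ trivialises $\E^M$ at $x$).
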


\begin{proof}
We identify $\Gamma(X,\F)$ with $\C^n$ by means of the
basis $(\sg)$. The residue of $\na$ at $s$ is then equal to the matrix
$L=\ti{L}_s$ of formula~(\ref{FS}). A logarithmic adjacent lattice $M$
is uniquely defined by an $L$-stable subspace $W\subset\C^n$. Let
$(\vep)$ be a basis respecting both $W$ and the Harder-Narasimhan flag
$F$, and let $P\in\Gc$ be the basis change from $(\sg)$ to
$(\vep)$. Assume for simplicity that we have ordered the vectors
$\vep_1,\ldots,\vep_n$ in such a way that if $\vep_i\in F_k\cap W$ and
$\vep_{i+1}\notin W$ then $\vep_{i+1}\notin F_k$. Let
$T=\diag(t_1,\ldots,t_n)$ be the diagonal integer matrix defined by
$t_i=1$ if and only if $\vep_i\notin W$. With the simplifying
assumption, the type of $\E^M$ is equal to $K-T$, including the
ordering condition, and the Harder-Narasimhan filtration is exactly
obtained by putting together the groups of vectors corresponding to
equal values of $K-T$. Therefore the basis $(\sg')=(z-s)^{K-T}(\vep)$
spans a Birkhoff-Grothendieck $\F'$ trivialisation of $\E^M$ at $s$,
and it is simultaneously a global basis of $V$. The transport
$t_x(\F')$ is again a Birkhoff-Grothendieck trivialisation of $\E^M$
at $x$, but it needs not be logarithmic anymore. 
Since $\E$ is a weak solution, we have $\E_x=\E_x^{M}=\D_x$.
Therefore, there exists a lattice gauge transformation
$P=I +P_1t_x + P_2 t_x^2+\cdots$ which sends the basis $(\sg')$
into its $\Upsilon$-basis $(\vep')$, where $\Upsilon$ is the Birkhoff form at $x$. 
The lattice $M'$ spanned by $t_x^{K-T}(\sg')$ is then necessarily logarithmic, according to 
proposition~\ref{poly:form}. We can effectively determine $M'$ by truncating the gauge $P$ at order
$d(M',D)-1= k_n-k_1-2 $, and then applying Gantmacher's classical recursive formul{\ae}~(\ref{G}). 
Then, the permutation lemma yields a monopole gauge
transform $\Pi$ at $x$ so that the resulting
trivialisation $\ov{\F}$ is both Birkhoff-Grothendieck and
logarithmic. In this last basis, the connection has again a
form~(\ref{FS}), where the spectrum of the residue at $x$ gives the
type of the modified logarithmic bundle $\E^M$.
\end{proof}

It results from proposition~\ref{weak:sol} that iterated 
applications of this procedure will describe the set of all weak solutions to the Riemann-Hilbert problem,
and the strong problem will be solvable if under the orbit of these transformations, one of the bundles
$\ov{\F}$ has a 0 residue at $x$.

%%%%%%%%%%%%%%%%%%
\subsubsection{The General Case}
\label{RH:sol1.2}
%%%%%%%%%%%%%%%%%

For the general case, we start with the Deligne bundle $\D$, for we 
only have the complete description of the local logarithmic lattices 
from the Deligne lattice.

According to the description given in proposition~\ref{bij:lattices}, any
logarithmic lattice $N\in\ra_s$ is given by an admissible pair
$(F,T)$ where $F$ is a $\r_s^{\D}\na$-stable flag. If we put us in
the situation of section~\ref{RH:sol1.1}, and consider a logarithmic
Birkhoff-Grothendieck trivialisation $\F$ of $\D$ at $x$, and identifying
$\Gamma(X,\F)$ to $\C^n$ by means of the basis $(\sg)$, then the
flag $F$ can be viewed as a flag in $\C^n$ stable under the matrix
$\ti{L}_s$. In order to actually construct the lattice $N$, one should
in principle reach first a Birkhoff form $\Upsilon_z$ in $\la=\D_s$.
We know from lemma~\ref{form:dist} that if we put $d=\max(t_i-t_j)$, 
a gauge $P$ of $z$-degree $d-1$ is already sufficient, as remarked in the
proof of proposition~\ref{gauge}. Let $(\vep)$ the basis obtained by $P$.
In the apartment spanned by the basis $(\vep)$ of $\la$ there is 
a Birkhoff-Grothendieck trivialisation $\tM$ of $\la$, as shown in the following scheme.

$$
\xymatrix{
\la:(\sg) \ar[r]^-{t^{\k}} \ar[d]^-{P} & Y_M \ar[dr]^-{\Pi} &  \\
\la:(\vep) \ar[dr]_-{t^{T}} \ar[r]^-{t^{\k_{\tau}}} & \tM \ar[r]^-{\ti{P}} & Y_{\tM} \\
& N \ar[u]_{t^{\k_{\tau}-T}} &
}
$$

Here we cannot avoid the permutation $\tau\in S_n$, because we can't
ensure that the Birkhoff gauge $P$ satisfies the principal minors condition from the permutation lemma:
the constant term $P_0$ sends the basis $(\ov{\sg})$ onto a basis that respects both
the Harder-Narasimhan flag of $\la$ and the flag $F$, but not as an {\em ordered} basis. 
Actually, the permutation $\tau\in S_n$ is
the label of the Schubert cell of $\Gc$ that contains the matrix $P_0$.

Although we can explicitly determine the diagonal
$\k_{\tau}-T$, there is no reason that these integers give the type of
$\D^M$, nor that $\tM$ is a Birkhoff-Grothendieck trivialisation of
$N$. The lattice $\tM$ is nevertheless a trivialising lattice,
therefore it is possible to compute a Birkhoff-Grothendieck
trivialisation $M'$ by means of the algorithm described in
section~\ref{BG:algo}. Indeed, the gauge $\ti{P}$, and therefore the monopole $\Pi$, have polynomial coefficients
that can be effectively computed.

We would also like to note that very recently and independently, in the arXiv
paper~\cite{PB}, P. Boalch has taken a similar view on local
logarithmic lattices, in terms of stable filtrations and Bruhat-Tits buildings.

%%%%%%%%%%%%%%%%%%%%%%%%

\end{document}